\newtheorem{theorem}{Theorem}[section]
\newtheorem{lemma}[theorem]{Lemma}
\newtheorem{proposition}[theorem]{Proposition}
\newtheorem{remark}[theorem]{Remark}
\newcommand{\be}{\begin{equation}}
\newcommand{\ee}{\end{equation}}
\newcommand{\nn}{\nonumber}
\def\m{\mu}
\def\n{\eta}
\def\t{\tau}
 \def \Z {{\mathbb Z}}
 \def \R {{\mathbb R}}
 \def \P {{\mathbb P}}
 \def \E {{\mathbb E}}
 \def \cE {\mathcal{E}}
\newcommand{\dint}{{\rm d}}
\newcommand{\Capac}{{\rm Cap}}
\numberwithin{equation}{section}
 \def \1{\mathbbm{1}} 
\def\sfrac#1#2{{\textstyle{#1\over #2}}}
\newcommand{\col}[1]{\textcolor[rgb]{0,0,0}{#1}}
\newcommand{\colo}[1]{\textcolor[rgb]{0,0,0}{#1}}
\title  {Metastability in the reversible inclusion process}
\author
{
 Alessandra Bianchi
 \footnote{
   Universit\`a degli Studi di Padova, Dipartimento di Matematica,
	 Via Trieste, 63, 35121 Padova, Italy.
	 E-mail: {\tt bianchi@math.unipd.it}
 }
 \and
 Sander Dommers
 \footnote{
   Ruhr-Universit\"at Bochum, Fakult\"at f\"ur Mathematik,
   Universit\"atsstra\ss e 150, 44780 Bochum, Germany.
   E-mail: {\tt sander.dommers@ruhr-uni-bochum.de}
 }
 \and
 Cristian Giardin\`a
 \footnote{
   Universit\`a degli Studi di Modena e Reggio Emilia, Dipartimento di Scienze Fisiche, Informatiche e Matematiche,
	 via Campi 213/b, 41125 Modena, Italy.
	 E-mail: {\tt cristian.giardina@unimore.it}
 }
}
\date{\today}
\begin{document}

\maketitle

%

\begin{abstract}
We study the condensation regime of the
finite reversible inclusion process, i.e.\col{,} the inclusion process
on a finite graph $S$ with an underlying random walk
that admits a reversible measure.
We assume that the random walk kernel is irreducible
and its reversible measure takes maximum value on
a subset of vertices $S_{\star}\subseteq S$.
We consider initial conditions corresponding to a single condensate
that is localized on one of those vertices and study
the metastable (or tunneling) dynamics.
We find that, if the random walk restricted to $S_{\star}$
is irreducible, then there exists a single time-scale for the condensate
motion. In this case we compute this typical time-scale and characterize the law of the (properly rescaled)
limiting process. If the restriction of the random walk to $S_{\star}$
has several connected components, a metastability scenario with
multiple time-scales emerges. We prove such \col{a} scenario,
involving two additional time-scales, in a one-dimensional
setting with two metastable states and nearest\col{-}neighb\col{or}
jumps.
\end{abstract}

\section{Introduction}

\colo{The inclusion process is an interacting particle system introduced in the context
of non-equilibrium statistical mechanics, as a dual process of certain diffusion
processes modeling heat conduction and Fourier's law \cite{GiaKurRed, GiaKurRedVaf, GiaRedVaf}.
Besides, it is also related to models in mathematical population
genetics \cite{CarGiaGibRed}, such as the Moran model,
and to models of wealth distribution \cite{CirRedRus}.
In addition to this, the inclusion process is  interesting  in its
own right as an interacting particle system belonging to the class of misanthrope processes
\cite{mis}}.

In the inclusion process, particles jump over a set $S$ of vertices,
thus the total number of particles $N$ is conserved by
the dynamics. The transitions are driven by two competing
contributions to the total \col{jump rate}.
Denoting by $\eta_x$ the particle number at site $x$, and
calling $r: S  \times S \rightarrow \mathbb{R}_+$
the jump rates of a continuous-time irreducible random walk on $S$,
the process is defined by the following rules (see Section \ref{RIP}
for the process generator):
\begin{enumerate}
\item[i)]
firstly, particles move as continuous-time independent random walks:
for a parameter $d_N>0$, each of the $\eta_x$ particles at site $x$ waits
a random time which is the minimum of exponential clocks of
parameters $d_N r(x,z)$ for $z\in S$, and
then jumps to site $y$ with probability $r(x,y)/(\sum_{z\in S} r(x,z))$.
\item[ii)]
secondly, particles jump because of an attractive interaction: each of the $\eta_x$
particles at site $x$ waits a random time which is the minimum of
exponential clocks of parameters $\eta_z r(x,z)$ for $z\in S$,
and then jumps to site $y$ with probability $\eta_y r(x,y)/(\sum_{z\in S} \eta_z r(x,z))$.
\end{enumerate}
Whereas the first contribution leads to {\em spreading} of the particles over the sites of $S$,
due to the second contribution particles have a preference to {\em accumulate} on a few sites.
This is to be compared with the well-known exclusion process, where particles
are subject to hard-core interactions that forbid more than one particle per site.
The inclusion process is a bosonic system, whereas the
exclusion \col{process} is fermionic.

The relative strength of the two contributions is tuned by the parameter sequence $d_N$.
In the long time limit, the two contributions find a compromise
into a (reversible) stationary measure that is shown explicitly in
Section \ref{subsec-condensation}. As \col{long} as $d_N>0$, this measure has mass over all the configuration space.
If the sequence $d_N$ approaches zero
sufficiently fast as $N\to \infty$, then the stationary measure concentrates on a
\col{small} subset of configurations. This is the phenomenon of
{\em condensation} in the inclusion process, first
studied in \cite{GroRedVaf11}. In the condensation regime
essentially all particles accumulate on a single site
of $S_\star\subseteq S$, the set over which the stationary
measure of the random walk takes maximum value. The condensation phenomenon
occurs in several other interacting particle systems \cite{EvaWac},
most notably the \col{z}ero \col{r}ange process \cite{GSS03}.

In this paper w\col{e c}onsider the condensation regime of the
inclusion process and study the dynamics of the condensate.
This problem was previously considered in \cite{GroRedVaf13}.
There, however, the authors assumed a symmetric random
walk kernel that therefore has a uniform reversible measure
on $S$.  Here we consider instead the case of a generic
reversible measure, thus allowing  the possibility that
$S_{\star}\neq S$.  Depending on the properties of the underlying random walk kernel,
the following {\em metastability} scenario with possibly {\em multiple time-scales}
emerges from our analysis.
If the restriction of the random walk kernel to $S_{\star}$ is
still irreducible, then the system has only one time-scale.
However, if such restriction is reducible into
several connected components, then there exist \col{up to}
three time-scales: a first time-scale over which the system
moves within connected components; a second time-scale
to see the jumps between components that are at
graph distance equal to two; a third (even longer) time-scale
for the jumps between components that are at
graph distance larger than two.

Our results include the characterization of the
single time-scale scenario in great generality.
In particular, when the system has only one
time-scale, we allow any geometry
and we are able to derive the rates of the limiting
Markovian dynamics.
We give a rigorous proof of the
multiple time-scale scenario \col{instead} in the one-dimensional
setting, i.e.\col{,} for linear chains with nearest-neighb\col{or}
jumps, whose end-points are the only maximal states of the
reversible measure. In this case we fully characterize the second time-scale
(together with the rates of the limiting dynamics) when
$|S|=3$, and we prove the existence of the third time-scale
when $|S|>3$. We conjecture that the same qualitative behavior
of the motion of the condensate occurs in great generality.

The key ingredient of the proofs of our main results
are {\em potential theory methods}.
We refer in particular to the  potential theoretic approach to metastability,
introduced in a series of papers by Bovier, Eck\col{h}off, Gayrard and Klein \cite{BEGK01}\col{--}\cite{BEGK04},
and to the martingale approach, developed in some recent papers by
Beltr\'{a}n and Landim \cite{BelLan10}\col{--}\cite{BelLan14}.
A general treatment of metastable systems may be found in \cite{OV},
where the pathwise approach to metastability is discussed,
while we refer to \cite{BdH15} for a recent book on the potential theory approach to metastability.

In the next section we introduce the model and state our main results precisely.
 We also give an outline of the proofs.
 In Section~\ref{sec-partition} we analyze the metastable sets.
 The proofs for the three different time-scales are given
 in Sections \ref{sec-cap-first-timescale}--\ref{sec-cap-third-timescale}, respectively.

\section{Model and results}
\subsection{Reversible inclusion process}
\label{RIP}
Consider a set of sites $S$ with $\kappa:=|S|<\infty$ and let $r: S  \times S \rightarrow \mathbb{R}_+$
be the jump rates of a continuous-time irreducible random walk on $S$,
reversible with respect to some probability measure $m$, i.e.,

\be
m(x) r(x,y) = m(y) r(y,x), \qquad {\rm for\ all\ } x,y\in S.
\ee
Without loss of generality, we assume that $r(x,x)=0$ for all $x\in S$.

Of special interest are the sites where $m$ attains its maximum. Hence, we define
\be
M_\star = \max \{m(x) : x\in S \}, \quad S_\star=\{x\in S : m(x)=M_\star\}  {\rm \quad and\quad }  \kappa_\star=|S_\star|.
\ee
and let
\be
m_\star(x)=\frac{m(x)}{M_\star},
\ee
which is a normalization of $m$ such that  $m_\star(x)=1$ if and only if $x\in S_\star$.

For a given underlying random walk we can now give the definition of the {\em reversible inclusion process}
$\{\eta(t) \, : \, t\geq0\}$.
For each $N\geq1$,  the set of configurations $E_N$ correspond to all the possible
arrangements of $N$ particles on $S$, that is
\be
E_{N} = \{\eta\in \mathbb{N}^{S} \,:\, \sum_{x\in S}\eta_x =N \}.
\ee
The component $\eta_x$ of $\n$ has to interpreted as the
number of particles at site $x\in S$.

To specify the possible transitions of the dynamics, for $x,y\in S$, $x\ne y$, and $\n\in E_N$ such that $\n_x >0$,
let us denote by $\n^{x,y}$ the configuration obtained from $\n$ by moving a particle from $x$ to $y$:
\be
(\eta^{x,y})_z = \left\{\begin{array}{ll} \eta_x-1,\qquad &{\rm for\ } z=x, \\ \eta_y+1, &{\rm for\ }z=y\col{,} \\ \eta_z, &{\rm otherwise.}\end{array}\right.
\ee
The {inclusion process} with $N$ particles is then a Markov process $\{\n(t)\,:\,t\geq 0 \}$
on $E_N$ with generator $L_N$, acting on
functions $F:E_N\rightarrow\mathbb{R}$, given by
\be\label{eq-generator-inclusion}
(L_N F)(\eta) = \sum_{x,y \in S}\eta_x \left(d_N+\eta_y\right) r(x,y) [F(\eta^{x,y})-F(\eta)]\;,
\ee
where $\{d_N >0\; : \; N\in\mathbb{N}\}$ is a sequence of positive numbers that \col{is} specified
later.
\subsection{Condensation and metastability}
\label{subsec-condensation}
The inclusion process has a stationary and reversible probability measure $\mu_N(\eta)$,
given by a product measure of negative binomials conditioned to a total number
of particles $N$, i.e.,
\be\label{eq-stationary-measure}
\mu_N(\eta) = \frac{1}{Z_{N,S}} \prod_{x\in S} m_\star(x)^{\eta_x}w_N(\eta_x),
\ee
where
\be
w_N(k) = \frac{\Gamma(k+d_N)}{k!\Gamma(d_N)},
\ee
and
\be
Z_{N,S} = \sum_{\eta\in E_N} \prod_{x\in S} m_\star(x)^{\eta_x}w_N(\eta_x).
\ee
We abbreviate
\be
m_\star^\eta:=\prod_{x\in S} m_\star(x)^{\eta_x} \qquad {\rm and} \qquad w_N(\eta)=\prod_{x\in S} w_N(\eta_x),
\ee
so that~\eqref{eq-stationary-measure} becomes
\be
\mu_N(\eta) = \frac{1}{Z_{N,S}} m_\star^\eta w_N(\eta).
\ee
The stationary measure is unique, because the underlying random walk, and hence also the inclusion process, is irreducible.
It can easily be checked that the measure in~\eqref{eq-stationary-measure} satisfies the detailed
balance, and thus is the reversible measure of the process.

If the parameter $d_N$ scales to zero fast enough in the limit $N\to\infty$, then
the  inclusion  process  shows  condensation, i.e.\col{,}  the invariant measure concentrates on
disjoint sets of configurations (that we shall call {\em metastable sets} or {\em condensates}).
To formalize this idea, let
\be
\mathcal{E}_N^x = \{ \eta : \eta_x=N\}\,,\qquad  x\in S_\star\,.
\ee
Moreover,  for $S_0 \subset S_\star$, define $\mathcal{E}_N(S_0) = \bigcup_{x\in S_0}\mathcal{E}_N^x $
 and let $\Delta = E_N \setminus \mathcal{E}_N(S_\star)$.

 The following result, prove\col{d} in \col{S}ection \ref{sec-partition},
 shows that invariant measure asymptotically concentrates on the sets (in fact singletons)
$\mathcal E_N^x$, $x\in S_\star$, which turn out to be the metastable sets of the process:
\begin{proposition}\label{prop-met-sets}
For $d_N \log N \to 0$ as $N\to\infty$, and  for all $x\in S_\star$,
\be
\lim_{N\to\infty} {\mu_N(\mathcal{E}_N^x)}= \frac{1}{\kappa_\star}.
\ee
As a consequence, $\lim_{N\to\infty} {\mu_N(\Delta)}=0$.
\end{proposition}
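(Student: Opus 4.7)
Since the condition $\sum_{z\in S}\eta_z = N$ together with $\eta_x = N$ forces $\eta = N\delta_x$, the set $\mathcal{E}_N^x$ is the singleton $\{N\delta_x\}$. For $x \in S_\star$ we have $m_\star(x)=1$ and $w_N(0)=1$, so \eqref{eq-stationary-measure} gives immediately
\[
\mu_N(\mathcal{E}_N^x)=\frac{w_N(N)}{Z_{N,S}},\qquad x\in S_\star.
\]
The plan is therefore to prove $Z_{N,S}\sim \kappa_\star\, w_N(N)$ as $N\to\infty$; the second assertion $\mu_N(\Delta)\to 0$ then follows immediately, because the $\kappa_\star$ sets $\mathcal{E}_N^x$ are pairwise disjoint and their total $\mu_N$-mass tends to $1$.

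The first step is the asymptotic $w_N(N)\sim d_N/N$. Writing $w_N(N) = \frac{d_N}{N}\prod_{k=1}^{N-1}(1+d_N/k)$, the logarithm of the product is at most $d_N H_{N-1} \le d_N(1+\log N)$, which vanishes in the limit by hypothesis. Exactly the same computation yields the uniform bound
\[
w_N(n) \le d_N\, e^{d_N}\, n^{d_N-1}, \qquad n\ge 1,
\]
which will be applied termwise below.

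The heart of the argument is the estimate of $Z_{N,S}$. Decompose the sum according to the support size $j := |\mathrm{supp}\,\eta|$. For $j = 1$ the contribution is $\sum_{x\in S} m_\star(x)^N w_N(N) = \kappa_\star w_N(N)(1+o(1))$, the $o(1)$ coming from the exponentially small factor $m_\star(x)^N$ for $x \notin S_\star$. For $j\ge 2$, applying the uniform bound above site by site and using $m_\star\le 1$,
\[
\sum_{\eta:\,|\mathrm{supp}\,\eta|=j} m_\star^\eta w_N(\eta)\;\le\;\binom{\kappa}{j}(d_N\, e^{d_N})^j \sum_{\substack{n_1+\cdots+n_j=N\\n_i\ge 1}}\prod_{i=1}^j n_i^{d_N-1}.
\]
Then, using $\prod_i n_i^{d_N} \le N^{jd_N}$ together with the elementary estimate
\[
\sum_{\substack{n_1+\cdots+n_j=N\\n_i\ge 1}}\prod_{i=1}^j \frac{1}{n_i}\;\le\; C_j\,\frac{(\log N)^{j-1}}{N},
\]
which follows by induction on $j$ from the base case $\sum_{n=1}^{N-1}[n(N-n)]^{-1}=2H_{N-1}/N$, the right-hand side has order $(d_N/N)(d_N\log N)^{j-1}\, N^{jd_N}$. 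Since $d_N\log N\to 0$ forces both $N^{jd_N}\to 1$ and $(d_N\log N)^{j-1}\to 0$, this is $o(w_N(N))$ for every fixed $j\ge 2$; summing over the finitely many $2\le j\le\kappa$ yields $Z_{N,S} = \kappa_\star w_N(N)(1+o(1))$, as required.

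The main obstacle is the $j\ge 2$ estimate: one must extract the smallness factor $(d_N\log N)^{j-1}$ cleanly, which requires a bound on $w_N(n)$ that is uniform in $n$ yet sharp enough to gain a factor essentially $d_N$ at each occupied site, combined with the logarithmic bound on the number of compositions. This is precisely where the scaling $d_N\log N\to 0$ in the hypothesis is used in an essential way; a weaker condition like $d_N\to 0$ alone would not suffice to rule out configurations with two or more occupied sites contributing at the same order as the condensed ones.
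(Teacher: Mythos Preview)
Your proof is correct. It differs from the paper's argument mainly in organization rather than in substance. The paper establishes the asymptotics $w_N(N)\sim d_N/N$ (its Lemma~\ref{lem-asymp-wN}, proved via Wendel's inequality) and $Z_{N,S}\sim \kappa_\star d_N/N$ (its Proposition~\ref{prop-limZN}) separately, and for the latter inducts on the number of sites: it shows
\[
Z_{n,k} \le \frac{d_N(1+o(1))}{n}\sum_{s=1}^k m_\star(s)^n + C_k\,\frac{d_N^2\log n}{n},
\]
the induction step being precisely the convolution bound $\sum_{\ell=1}^{n-1}[\ell(n-\ell)]^{-1}=O((\log n)/n)$ that you also use. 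You instead decompose $Z_{N,S}$ by the size $j$ of the support of $\eta$, which makes the mechanism more transparent: each occupied site costs a factor essentially $d_N$, and the combinatorics of compositions yields an extra $(\log N)^{j-1}$, so the ratio to the $j=1$ term is $(d_N\log N)^{j-1}\to 0$. The analytic core---the uniform bound $w_N(n)\le d_N e^{d_N} n^{d_N-1}$ and the iterated convolution estimate---is the same in both arguments; your organization just avoids the intermediate induction hypothesis and packages everything into a single sum over $j$. Both approaches make clear that the hypothesis $d_N\log N\to 0$ is exactly what is needed, and neither gives more than the other.
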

The metastability problem we address in this paper is the following.
Assume the process is started from a configuration corresponding to a single condensate.
Then we determine the time-scale(s) over which the condensate moves
and characterize the law of the process describing the motion of the condensate.
\begin{remark}
 Notice that the metastable sets  $\mathcal E_N^x$, $x\in S_\star$, have equal $\mu_N$-measure.
 It may be worth to mention that in this situation  some authors prefer to speak about
  \emph{tunneling  behavior} rather than metastability.
However, this abuse of terminology is currently quite diffuse in the mathematical literature,
and w\col{e j}ust use the word metastability.
\end{remark}

\subsection{Results}
In order to state our findings we introduce some notatio\col{n.}
For a set $A \subset E_N$, let $\t_A$ denote the hitting time of $A$:
\be
\tau_{A} = \inf \{ t\geq 0 \;:\; \eta(t) \in A\}\;.
\ee
Moreover, with the identification $\mathcal E_N^{\star}\equiv \mathcal E_N(S^{\star})$,
let $\eta^{\mathcal{E}_N^{\star}}(t)$ denote the trace process on $\mathcal E_N^{\star}$, i.e.\col{,}
the process obtained from $\n(t)$  by cutting out all time periods where the system
is not in  $\mathcal E_N^{\star}$.
Formally, for all $t\geq 0$,
$\eta^{\mathcal{E}_N^{\star}}(t):= \eta(S_{\mathcal{E}_N^{\star}}(t))$
with $S_{\mathcal{E}_N^{\star}}(t)$ the generalized inverse of the local time $\ell_{\mathcal{E}_N^{\star}}(t)$:
\be
\ell_{\mathcal{E}_N^{\star}}(t) = \int_{0}^t {\bf 1}_{\{\eta(s) \in \mathcal{E}_N^{\star}\}} ds
\,\quad\mbox{and}\quad
S_{\mathcal{E}_N^{\star}}(t) = \sup \{s \ge 0 \,:\, \ell_{\mathcal{E}_N^{\star}}(s) \le t\}\,.
\ee
 Notice that this is still a  Markov process
\colo{(we refer to \cite{BelLan10} for
a proof of this result).
}

Finally, let us define the process
\be\label{eq-metastable-process}
X_N(t) = \psi_N(\eta^{\mathcal{E}_N^{\star} }(t))\col{,}
\ee
where $\psi_N\,: \mathcal{E}_N^{\star} \mapsto S_{\star}$ is given by
\be
\psi_N(\eta) = \sum_{x\in S_{\star}} x \cdot {\bf 1}_{\{\eta \in \mathcal{E}_N^x \}}\,.
\ee

With the above notation, and the usual convention that  $\mathbb{E}_{\eta}(\cdot)$ denotes the expectation
when the process $\eta(t)$ is started from $\n$ at time $t=0$, we prove the following:
\begin{theorem}[First time-scale]\label{thm-first-timescale}
If $d_N \log N \to 0$ as $N\to\infty$, then, for all $x\in S_\star$\col{,}
\begin{itemize}
\item[(i)]
The average time to move  the condensate at $x$ to another site of $S_{\star}$ is given by
\be\label{eq-mean-time}
\mathbb{E}_{\mathcal{E}_N^x} (\tau_{\mathcal{E}_N (S_{\star}\setminus \{x\})}) = \frac{1}{\sum_{y\in S_\star, y \neq x} r(x,y)} \frac{1}{d_N} \left(1 + o(1)\right)\col{.}
\ee
\item[(ii)] Assume that $X_N(0) = x$. Then,
the speeded-up process $\{X_N(t/d_N)\,,\,t\geq 0\}$ converges weakly on the path space
$D(\mathbb R_+,S_{\star})$  to the  Markov process $\{X(t)\,,\, t\geq 0 \}$
on $S_\star$,  with $X(0)=x$ and generator
\be\label{eq-convergence}
{\cal L}f(y) = \sum_{z\in S_\star} r(y,z) [f(z) - f(y)] \,.
\ee
Furthermore, the system spends negligible time outside the metastable states, i.e.\col{,} $\forall \, T >0$
\be\label{eq-0time}
\lim_{N\to\infty} \mathbb{E}_{\mathcal E_N^x} \left[ \int_0^T {\bf 1}_{\{\eta(s/d_N) \in \Delta\}} ds \right] = 0\col{.}
\ee
\end{itemize}
\end{theorem}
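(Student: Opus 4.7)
The plan is to apply the potential-theoretic approach to metastability together with the Beltr\'an--Landim martingale method, reducing both statements to sharp asymptotic estimates for the capacities between the metastable condensates $\mathcal{E}_N^x$, $x\in S_\star$.

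\textbf{Capacity estimate.} The core computation is the asymptotic identity
\begin{equation}
\capac_N\bigl(\mathcal{E}_N^x,\mathcal{E}_N(A)\bigr) = \frac{d_N}{\kappa_\star}\sum_{y\in A} r(x,y)\,(1+o(1))
\end{equation}
for every $x\in S_\star$ and every non-empty $A\subseteq S_\star\setminus\{x\}$. The upper bound follows from the Dirichlet variational principle with a test function built from the equilibrium potential $h^{\rm RW}_{x,A}$ of the underlying random walk between $\{x\}$ and $A$, lifted to $E_N$ so that it equals $h^{\rm RW}_{x,A}(z)$ on $\mathcal{E}_N^z$ for each $z\in S_\star$ and extended to $\Delta$ in a natural way (e.g.\ by tracking the position of the particle most recently escaped from the condensate). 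On this test function only single-particle-leakage transitions between almost-full condensates contribute at leading order; intermediate configurations with a condensate at some $z\in S\setminus S_\star$ are exponentially suppressed by the Boltzmann factor $m_\star(z)^N$ appearing in~\eqref{eq-stationary-measure}. The matching lower bound is obtained via the Thomson (equivalently Berman--Konsowa) flow principle, by exhibiting unit flows from $\mathcal{E}_N^x$ to $\mathcal{E}_N(A)$ supported on single-particle-hopping paths that transfer the $N$ particles one at a time along the random-walk edges $(x,y)$ with $y\in A$.

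\textbf{Proof of (i).} Combining the capacity estimate with the limit $\mu_N(\mathcal{E}_N^x)\to 1/\kappa_\star$ from Proposition~\ref{prop-met-sets}, together with the general Beltr\'an--Landim identity
\begin{equation}
\mathbb{E}_{\mu_N(\cdot\mid \mathcal{E}_N^x)}\bigl(\tau_{\mathcal{E}_N(A)}\bigr) = \frac{\mu_N(\mathcal{E}_N^x)}{\capac_N(\mathcal{E}_N^x,\mathcal{E}_N(A))}\,(1+o(1)),
\end{equation}
applied with $A=S_\star\setminus\{x\}$, yields~\eqref{eq-mean-time}. Because $\mathcal{E}_N^x$ reduces to the single configuration $N\delta_x$, the conditional expectation above coincides with $\mathbb{E}_{\mathcal{E}_N^x}$, so no averaging correction is required.

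\textbf{Proof of (ii).} For the weak convergence of $\{X_N(t/d_N)\}$ I would invoke the Beltr\'an--Landim theorem on convergence of trace processes for reversible Markov chains. Its hypotheses are: (a) the capacity asymptotics established above, which after division by $d_N\mu_N(\mathcal{E}_N^x)$ return the rates $r(x,y)$ identifying the generator~\eqref{eq-convergence}; (b) the convergence $\mu_N(\mathcal{E}_N^x)\to 1/\kappa_\star$ of Proposition~\ref{prop-met-sets}; and (c) the negligibility condition~\eqref{eq-0time}. For (c), stationarity gives $\mathbb{E}_{\mu_N}\int_0^{T/d_N}\mathbf{1}_{\{\eta(s)\in\Delta\}}\,ds = (T/d_N)\mu_N(\Delta)$; since $\mu_N(\mathcal{E}_N^x)$ is bounded below, a change of initial distribution from $\mu_N$ to the one concentrated on $\mathcal{E}_N^x$ costs only a bounded factor, and Proposition~\ref{prop-met-sets} gives $\mu_N(\Delta)\to 0$, which delivers~\eqref{eq-0time}. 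Tightness of $X_N(\cdot/d_N)$ in the Skorokhod topology then follows from the hitting-time bounds of part (i).

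\textbf{Main obstacle.} The principal technical hurdle is the matching lower bound on $\capac_N$. One must design a flow whose energy matches $d_N r(x,y)/\kappa_\star$ up to lower-order corrections, while showing that alternative routes in $E_N$ — through intermediate split configurations or through sites of $S\setminus S_\star$ — are negligible. The exponential suppression of excursions into $S\setminus S_\star$ stems from the weights $m_\star(z)^N<1$ in~\eqref{eq-stationary-measure}; quantifying it while simultaneously controlling entropy factors of order $N$ that could otherwise spoil the leading-order cancellation is the main analytical effort.
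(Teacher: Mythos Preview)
Your overall framework --- sharp capacity asymptotics combined with the Beltr\'an--Landim machinery and formula~\eqref{eq-rate-capacity} for the trace rates --- is exactly the paper's route, and your arguments for parts (i) and (ii), including the stationarity bound for~\eqref{eq-0time}, match the paper's proof in Section~\ref{sec-Proof-thm1} almost verbatim.

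The gap is in the capacity estimate itself, and you have the difficulty inverted. The \emph{lower} bound on $\capac_N(\mathcal{E}_N(S_\star^1),\mathcal{E}_N(S_\star^2))$ is the short part: the paper (Proposition~\ref{prop-capacities-between-metastable-sets-lb}) simply restricts the Dirichlet form to the one-dimensional tubes $A_N^{x,y}=\{\eta:\eta_x+\eta_y=N\}$ for $x\in S_\star^1$, $y\in S_\star^2$ and applies the series law (Lemma~\ref{lem-eff-resistance}). Your flow approach would produce the same bound, but there is nothing to prove about ``alternative routes'': any unit flow gives a valid lower bound, and you are free to support it on the tubes and ignore everything else. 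The entropy-versus-energy competition you describe is simply not present on this side.

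The real obstacle is the \emph{upper} bound, and here your proposal does not work as stated. A test function must be a function of the configuration $\eta$; ``tracking the position of the particle most recently escaped from the condensate'' depends on the history of the process and cannot be inserted into the Dirichlet principle. The paper (Proposition~\ref{prop-capacities-between-metastable-sets-ub}) uses the explicit function $F_{S^1}(\eta)=\sum_{v\in S^1}\phi_\varepsilon(\eta_v/N)$, and the substantive work (Lemmas~\ref{lem-insidetubesS1S2}--\ref{lem-outsidetubes}) is showing that the Dirichlet form of $F_{S^1}$ on configurations \emph{outside} the tubes $\bigcup_{v\in S^1}A_N^v$ is $o(d_N)$. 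This is precisely where one must control the combinatorial entropy of split configurations against the weights, and it relies on the partition-function bound~\eqref{eq-inductionboundZnk} rather than the crude factor $m_\star(z)^N$; configurations with particles spread over several sites of $S_\star$ carry no $m_\star$ suppression at all and must be handled separately.
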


\begin{remark}
The weak convergence stated in item (ii) of Theorem \ref{thm-first-timescale}
refers to the pat\col{h s}pace endowed with the Skorokhod topology.
We stress the fact that from this result, together with condition (\ref{eq-0time}),
one can also infer  the weak convergence of the speeded-up projected process $\{\psi_N\left(\n(t/d_N)\right)\,,\,t\geq 0\}$
to the Markov process $\{X(t)\,,\, t\geq 0 \}$  as defined above, though with a topology on the path space, called soft topology,
that is weaker than the Skorokhod one. We refer to \cite{Lan14} for the details.
\end{remark}

From Theorem \ref{thm-first-timescale}, we conclude  that on this first time-scale
 the condensate can only jump between sites in $S_\star$ that are
 connected in the graph induced by the underlying dynamics.
In particular, if $x,y\in S_\star$ are not connected by a path in $S_{\star}$, then the condensate
will not move from $x$ to $y$ on the time-scale $1/d_N$.
Since the  inclusion process is irreducible,
we therefore expect that this movement occurs on a longer time-scale.


We formalize these ideas focusing  on a specific one-dimensional setting.
For an integer $\kappa\geq 2$, let
\be\label{linear-dynamics}
S=[1,\, \kappa]\cap \Z\,, \mbox{ with } \quad
r(x,y)\neq 0\; \mbox{ iff } \; |x-y|=1\,,\quad  S_\star=\{1,\kappa\}\,,
\ee
that is indeed an example of dynamics that is not irreducible when restricted to $S_\star$.

For such systems we have the following, where we say that $d_N$ decays
{\em subexponentially} if, for all $\delta>0$, $\lim_{N\to\infty} d_N e^{\delta N} = \infty$.

\begin{theorem}[Second time-scale]\label{thm-second-timescale}
Consider an underlying random walk as in (\ref{linear-dynamics}), with $\kappa=3$.
Assume that $d_N$ decays subexponentially and $d_N\log N \to 0$ as $N\to\infty$. Then
\begin{itemize}
\item[(i)]
The average time to move  the condensate between the sites of $S_\star$ is given by
\be\label{eq-mean-time2}
\mathbb{E}_{\mathcal E_N^1} (\tau_{\mathcal{E}_N^3})= \mathbb{E}_{\mathcal E_N^3} (\tau_{\mathcal{E}_N^1})  =
\left(\frac{1}{r(1,2)}+\frac{1}{r(3,2)}\right)\cdot\left(1-m_\star(2)\right) \cdot \frac{N}{d_N^2} \left(1 + o(1)\right).
\ee
\item[(ii)] Assume that $X_N(0) = x \in S_\star$. Then,
the speeded-up process $X_N(t N/d_N^2)$ converges weakly on the path space
$D(\mathbb R_+,S_{\star})$  to the  Markov process $\{X(t)\,,\, t\geq 0 \}$
on $S_\star$,  starting at $X(0)=x$ and jumping back and \col{forth} between $x$ and $S_\star\setminus\{ x\}$ at rate
\be\label{eq-convergence2}
\left(\frac{1}{r(1,2)}+\frac{1}{r(3,2)}\right)^{-1}\frac{1}{1-m_\star(2)}\,.
\ee
Furthermore, the system spends negligible time outside the metastable states, i.e.\col{,} $\forall \, T >0$ and $x\in S_\star$,
\be\label{eq-0time2}
\lim_{N\to\infty} \mathbb{E}_{\mathcal E_N^x} \left[ \int_0^T {\bf 1}_{\{\eta(s\cdot N/d_N^2) \in \Delta\}} ds \right] = 0\,.
\ee
\end{itemize}
\end{theorem}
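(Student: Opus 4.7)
My plan is to combine the potential-theoretic approach to metastability with the trace-process framework of Beltr\'an--Landim. The main step is a sharp capacity asymptotic. Using the standard identity
\[
\mathbb{E}_{\mathcal{E}_N^1}[\tau_{\mathcal{E}_N^3}] = \frac{\mu_N(\mathcal{E}_N^1)}{\capac_N(\mathcal{E}_N^1, \mathcal{E}_N^3)}(1 + o(1)),
\]
valid since $\mathcal{E}_N^1 = \{(N,0,0)\}$ is a singleton and therefore the distribution $\mu_N|_{\mathcal{E}_N^1}$ is trivial, Proposition \ref{prop-met-sets} reduces item (i) to proving
\[
\capac_N(\mathcal{E}_N^1, \mathcal{E}_N^3) = \frac{d_N^2}{2 N (1-m_\star(2))} \cdot \frac{r(1,2)\, r(3,2)}{r(1,2)+r(3,2)} (1 + o(1)).
\]

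\textbf{Sharp capacity estimate.} For the upper bound I would apply the Dirichlet variational principle with a test function $h_N$ depending essentially on the macroscopic coordinate $\eta_1/(\eta_1+\eta_3)$, truncated to configurations with $\eta_2$ below a slowly growing cutoff. The Dirichlet form then reduces to a one-dimensional resistor-network problem on the corridor $\{(k,1,N-1-k):0\le k\le N-1\}$: these are the bottleneck states, because the entry/exit transitions $(N,0,0)\leftrightarrow(N-1,1,0)$ and $(0,0,N)\leftrightarrow(0,1,N-1)$ have conductance of order $d_N^2$ (using $w_N(N)\asymp d_N/N$ as $d_N\to 0$), while internal motion within the corridor is comparatively fast. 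Optimizing $h_N$ on this corridor produces the series-resistance factor $[1/r(1,2)+1/r(3,2)]^{-1}$, and aggregating higher strata $\eta_2 = j \ge 1$ via the geometric series $\sum_{j\ge 1} m_\star(2)^{j-1} = (1-m_\star(2))^{-1}$ yields the remaining constant. The matching lower bound comes from the Thomson principle (or Berman--Konsowa) with a unit flow from $\mathcal{E}_N^1$ to $\mathcal{E}_N^3$ supported on the same corridor, distributed essentially uniformly along the level-$1$ chain.

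\textbf{Weak convergence.} Once (i) is in hand, item (ii) follows from the Beltr\'an--Landim characterization of limits of trace processes. One has to verify: (a) the sharp mean transition rate, provided by (i); (b) negligibility of the time spent outside $\mathcal{E}_N^\star$, i.e.\ (\ref{eq-0time2}), which follows from $\mathbb{E}_{\mu_N}\int_0^T \ind_{\{\eta(sN/d_N^2)\in\Delta\}}\,ds = T\mu_N(\Delta)=o(1)$ combined with a trivial replacement of the initial distribution (since $\mathcal{E}_N^x$ is a singleton); and (c) tightness and uniqueness of the limit, both immediate on the two-point space $S_\star=\{1,3\}$.

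\textbf{Main obstacle.} The crux is the sharp capacity calculation. Establishing the scale $d_N^2/N$ is already delicate because one must track two independent small parameters ($d_N$ and the factor $1/N$ coming from $w_N(N)\asymp d_N/N$), and pinning down the exact constant requires identifying the correct bottleneck stratum and matching test function and flow to leading order on it. The subexponential-decay hypothesis on $d_N$ enters at this stage to show that configurations with many particles at site $2$, whose invariant measure can be as small as $m_\star(2)^N$, make negligible contributions compared with the target order $d_N^2/N$.
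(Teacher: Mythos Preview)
Your high-level architecture---reduce to a sharp capacity asymptotic via the singleton formula $\mathbb{E}_{\mathcal{E}_N^1}[\tau_{\mathcal{E}_N^3}]=\mu_N(h_{1,3})/\Capac_N(\mathcal{E}_N^1,\mathcal{E}_N^3)$, then invoke Beltr\'an--Landim for the trace-process limit, with the negligible-excursion bound following from stationarity and Proposition~\ref{prop-met-sets}---is exactly the paper's route. The discrepancy is entirely in your picture of the bottleneck, and it would derail the capacity computation.

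\medskip
\noindent\textbf{Where the heuristic fails.} The entry/exit edges $(N,0,0)\leftrightarrow(N-1,1,0)$ and $(0,0,N)\leftrightarrow(0,1,N-1)$ do \emph{not} have conductance $d_N^2$: since $\mu_N((N,0,0))\to1/\kappa_\star$ and the rate is $N d_N r(1,2)$, their conductance is of order $N d_N$, far larger than the target capacity $d_N^2/N$. More seriously, the ``level-1 chain'' $\{(k,1,N-1-k):0\le k\le N-1\}$ is \emph{not connected} by single jumps (sites $1$ and $3$ are not neighbours), so no unit flow can be supported on it, and ``fast internal motion within the corridor'' is not a meaningful statement. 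The resistance is not concentrated at the endpoints at all: along the genuine nearest-neighbour path
\[
(N,0,0)\to(N-1,1,0)\to(N-1,0,1)\to(N-2,1,1)\to\cdots\to(0,1,N-1)\to(0,0,N),
\]
each pair of steps $(k+1,0,\cdot)\to(k,1,\cdot)\to(k,0,\cdot)$ has resistance of order $\tfrac{N-k}{N d_N^2 r(1,2)}+\tfrac{k}{N d_N^2 r(3,2)}$, and it is the \emph{sum} over $k$ that produces the factor $N/d_N^2$ together with the series constant $\tfrac{1}{r(1,2)}+\tfrac{1}{r(3,2)}$. The geometric factor $(1-m_\star(2))^{-1}$ then arises from the analogous paths that carry an ambient load of $\ell$ particles on site~2, weighted by $m_\star(2)^\ell$ for $\ell\ge0$.

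\medskip
\noindent\textbf{What this means for your plan.} A Thomson-principle flow would have to be routed through these zig-zag paths (alternating $\eta_2\in\{\ell,\ell+1\}$), not along a disconnected stratum; and a test function of the form $g(\eta_1/(\eta_1+\eta_3))$ is awkward because the two transition types $1\to2$ and $2\to3$ change this ratio in incommensurate ways. The paper instead restricts the Dirichlet form directly to the zig-zag corridors (repeated use of the linear-chain lemma) for the lower bound, and for the upper bound uses an explicit test function built from $\phi_{2\varepsilon}$ and the antiderivatives $\int_0^{\phi_{2\varepsilon}(\cdot)}(1-x)\,dx$ and $\int_0^{\phi_{2\varepsilon}(\cdot)}x\,dx$, which are tuned so that the two transition types contribute the right asymmetric weights $1/r(1,2)$ and $1/r(3,2)$. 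Your identification of the subexponential hypothesis as controlling the $\eta_2\gtrsim\varepsilon N$ tail is correct and is precisely how it enters the upper bound.
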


A\col{s  wi}ll be clear from the proof of Theorem \ref{thm-second-timescale}  (see \col{Section }\ref{sec-cap-second-timescale})
the explanation for the presence of this second time-scale
is that, in order to move the condensate between  sites $1$ and $3$,
the system is forced to bring particles through  site $2$.
The presence of particles on sites of $S\setminus S_\star$ is however
an unlikely event, that slows down the motion of the condensate through sites of $S_\star$
and yields a much longer transition time-scale. In this sense, we may consider the sites
of $S\setminus S_\star$ as traps for the dynamics of the system.

Following this idea, the natural further question is about the presence of possibly many time-scales
related to the \emph{length of these traps}, that is to the graph-distance
between disconnected sites of $S_\star$.
We answer this question in the affirmative for linear systems as those defined in (\ref{linear-dynamics}),
proving that an even longer time-scale is required to move the condensate
between the disconnected sites $\{1,\,\kappa\} \in S_\star $ at {\em arbitrary} (but finite) graph-distance greater than $2$.
We have the following:

\begin{theorem}[Third time-scale]\label{thm-third-timescale}
Consider an underlying random walk as in (\ref{linear-dynamics}), with $\kappa> 3$.
Furthermore, assume that $d_N$ decays subexponentially and $d_N\log N \to 0$ as $N\to\infty$.
Then, the average time to move  the condensate between  sites $x,y \in S_\star$, $x\ne y$,
satisfies the bounds
\be\label{eq-3timescale}
C_1 \leq \liminf_{N\to\infty}  \frac{d_N^3}{N^2} \mathbb{E}_{\mathcal E_N^x} (\tau_{\mathcal{E}_N^y})
\leq \limsup_{N\to\infty} \frac{d_N^3}{N^2} \mathbb{E}_{\mathcal E_N^x} (\tau_{\mathcal{E}_N^y}) \leq C_2,
\ee
for some constant $0<C_1,C_2<\infty$.
\end{theorem}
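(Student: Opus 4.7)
The plan is to adapt the potential-theoretic approach used in the previous sections. By the standard identities relating mean hitting times to capacities (see, e.g., \cite{BdH15}), combined with Proposition~\ref{prop-met-sets} which yields $\mu_N(\mathcal{E}_N^x)\to 1/\kappa_\star$, the bounds~\eqref{eq-3timescale} reduce to showing
\[
c_1\,\frac{d_N^3}{N^2} \;\leq\; \mathrm{cap}_N\!\left(\mathcal{E}_N^x,\mathcal{E}_N^y\right) \;\leq\; c_2\,\frac{d_N^3}{N^2}
\]
for constants $0<c_1\leq c_2<\infty$; the lower capacity bound gives the upper bound $C_2$ on the mean time, and vice versa.

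For the upper bound on the capacity (which yields the constant $C_1$), I would apply the Dirichlet variational principle to a test function $f:E_N\to[0,1]$ with $f\equiv 1$ on $\mathcal{E}_N^x$ and $f\equiv 0$ on $\mathcal{E}_N^y$. Motivated by the $\kappa=3$ analysis from Section~\ref{sec-cap-second-timescale}, $f$ should interpolate across configurations indexed by the position of a small ``seed'' of order-one particles outside $S_\star$, with the Dirichlet cost dominated by transitions that create or annihilate this seed via the main condensate. For the lower bound on the capacity (which yields $C_2$), I would apply the Thomson principle with an explicit unit flow from $\mathcal{E}_N^x$ to $\mathcal{E}_N^y$ supported on a narrow corridor of configurations: the flow first detaches two particles from the condensate at $x$ into adjacent intermediate sites, then translates the resulting two-particle seed one step at a time along the chain, and finally absorbs it into the condensate forming at $y$. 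The energy of this flow can then be estimated edge by edge using the explicit formula~\eqref{eq-stationary-measure} for $\mu_N$ and reversibility.

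The main technical obstacle is to justify why the exponent of $d_N$ in the capacity scaling is exactly $3$ independently of $\kappa>3$. Heuristically, forming a seed of two bound particles outside the main condensate costs two factors of $d_N$ and a factor of $1/N$ (since most singletons detached from the condensate return before meeting a partner), while transporting this bound pair along the chain costs one further factor of $d_N$ but no additional factor of $N$, because the inclusion interaction holds the pair together and the number of intermediate sites contributes only to the overall constants $C_1$ and $C_2$. Making this precise --- by choosing a Dirichlet test function that captures only the seed-formation bottleneck, and a Thomson flow that avoids paying an extra $d_N$-penalty per intermediate site, while controlling the various ``return'' events in which partially formed seeds collapse back to the main condensate --- is where the bulk of the technical work lies.
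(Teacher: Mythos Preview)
Your reduction to capacity estimates via the hitting-time identity and Proposition~\ref{prop-met-sets} is correct and matches the paper's framework, and using the Dirichlet principle for one bound and a flow argument for the other is sound. However, the mechanism you propose and the scaling heuristic you extract from it are both wrong, so the argument as sketched cannot close.

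First, the Thomson flow you describe only moves two particles; a unit flow from $\mathcal{E}_N^x$ to $\mathcal{E}_N^y$ must transport the entire condensate of $N$ particles. The paper's lower bound (Proposition~\ref{prop-capa3-lb}) restricts the Dirichlet form to a \emph{single-particle relay}: for each $j=N-1,\ldots,0$, one particle is detached from the pile at site $1$, walked through $2,\ldots,\kappa-1$, and absorbed at $\kappa$, passing through configurations with $\eta_1=j$, $\eta_\kappa=N-j-1$, and one particle on an intermediate site $p$. No two-particle seed is involved. Your intuition that a bound pair ``holds together'' and propagates at a reduced $d_N$-cost is also incorrect: advancing a pair from sites $(p,p+1)$ to $(p+1,p+2)$ still requires a hop onto the empty site $p+2$, which carries rate $O(d_N)$, so the pair does not evade the diffusive penalty per step.

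Second, your heuristic produces capacity of order $d_N^3/N$, whereas the target is $d_N^3/N^2$; this is inconsistent with the very bounds you wrote down. The missing factor of $1/N$ has nothing to do with seed formation or with the chain length (which is $O(1)$). In the single-particle relay the bottleneck edge $(j,p)\to(j,p+1)$, for $2\le p\le\kappa-2$, has conductance of order $d_N^4/\bigl(Z_N\,j(N-j)\bigr)$: three factors of $d_N$ from $w_N$ at the three occupied sites and one further $d_N$ from the rate $1\cdot d_N$ of a lone particle hopping to an empty neighbor. Hence the resistance at stage $j$ is proportional to $j(N-j)$, and $\sum_{j}j(N-j)\sim N^3/6$; combined with $Z_N\sim 2d_N/N$ this yields total resistance $\sim N^2/d_N^3$. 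The true bottleneck is thus the passage through the balanced splittings $j\approx N/2$, where the invariant measure of the single-excursion configuration is smallest --- not the initial detachment of a seed. The paper's matching upper bound (Proposition~\ref{prop-capa3-ub}) uses a test function of the form $F(\eta)=6\sum_\ell c_\ell\int_0^{\phi_{2\varepsilon}(\cdots)}x(1-x)\,\dint x$, whose integrand $x(1-x)$ encodes precisely this $j(N-j)$ weighting.
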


\col{Notice that the upper bound in \eqref{eq-3timescale} excludes the possibility  of the p}resence of longer transition time-scales (or deepe\col{r} traps).
As \col{is} shown in Section  \ref{Lower-bound-third}, the proof
of the upper bound in \eqref{eq-3timescale} (\col{which corresponds to the} lower bound on capacities)
can actually be extended to more general setting beyond the one-dimensional
case. Thus we \col{conjecture} that the inclusion process has always at most
three time-scales for the motion of the condensate, although we  cannot exclude the possibility of the presence of intermediate time-scales.

\subsection{Discussion}
\paragraph{Symmetric inclusion process.}
The paper \cite{GroRedVaf13} proves results similar to those of item (ii) of Theorem~\ref{thm-first-timescale}
in the case where the underlying random walk is symmetric, i.e.\col{,}  $r(x,y)=r(y,x)$, and under the assumption that
$d_N\to 0$ and $d_N N \to \infty$ as $N\to\infty$.
In this case the underlying random walk is reversible with respect to the measure $m_\star \equiv 1$,
so that $S=S_\star$. In particular, all the sites of $S_\star$ belong to the same connected component
 and  the motion of the condensate involves only the first time-scale, of order $1/d_N$.
Let us mention that the results of~\cite{GroRedVaf13} were obtained by completely different techniques, namely
by a  direct scaling and expansion of the generator~\eqref{eq-generator-inclusion}, that was shown
to converge to the generator \eqref{eq-convergence} of the limiting Markov process.

\paragraph{Multiple time-scales.}
Our analysis yields a metastable behavior characterised -- in general --
by multiple time-scales.
Though we prove  the existence  of the second and third time-scale
given in Theorems~\ref{thm-second-timescale} and \ref{thm-third-timescale}
only for the one-dimensional setting in \eqref{linear-dynamics},
we conjecture that the same time-scales show up for general underlying dynamics
and that no further time-scales can occur.
In fact, we expect that the leading mechanism beyond the motion of the condensate can be reduced
to a train of particles moving along single paths between metastable sets.
In this sense, each path can  be seen as a one-dimensional system,
and the results should be proved in a similar way.

However, to formalize this idea one has first to define, for each time-scale,
a new family of metastable sets obtained  by  merging together the metastable states
that are connected on a lower time-scale (a formalization of this merging can for example be found in~\cite{BelLan11}).
Then, one has to show that the reduction to one-dimensional paths is correct, or in other words,
that flows of particles other than that described above, are unlikely to happen.
Because of the complex geometry that may appear in general situations, this may be a rather
difficult task.

 For other systems with multi-scal\col{e} metastable behav\col{ior} see, for example,
 the Blume-Capel model~\cite{CirNar13, LanLem15} and
 the random field Curie-Weiss model~\cite{BiaBov09}.

\paragraph{Comparison with the zero range process.}
The zero range process (ZRP) is a well known interacting particle system that
under suitable hypotheses  displays the condensation phenomenon  (see e.g.~\cite{GSS03, AL09}, and reference therein).
The dynamics of a condensate for the ZRP  has bee\col{n s}tudied in the finite reversible case  in~\cite{BelLan12},
as a first application of the martingale approach to  metastability that was proposed by the same authors
 \cite{BelLan11,BelLan14}. The results have then been generalized to the case of a diverging number of sites
   \cite{BN14,AGL15},  and to the totally asymmetric case \cite{L14}.
  The quite complete picture of metastability obtained in the ZRP,
  allows for a comparison with the results obtained in the reversible inclusion process.
  In both cases:\\
   (i) the  condensate is present only on sites of $S_\star$;\\
 (ii) the metastable sets are equally probable w.r.t.\ the equilibrium measure,
  and thus they are equally stable;\\
   (iii) the energetic barriers that separate
the metastable sets are (at most) logarithmic  with the number of particles,
thus yielding  transition times that are at most polynomial in $N$.

 More interesting are instead differences between the two processes:\\
 (a) the ZRP has only one relevant time-scale, at which the condensate can jump
 directly between any sites in $S_\star$.
 This is due to the fact that the rates of the scaling process on $S_\star$ are given by the capacities
 of the underlying random walk, that are all positive by irreducibility,  thus making irreducible
 also the condensate dynamics;\\
(b) the condensate of the ZRP does not consist of $N$ particles, but only of $N-\ell_N$ particles,
for some $\ell_N$ such that $\ell_N\to\infty$ and $\ell_N/N\to0$ as $N\to\infty$.
 It is exactly due to that small number of particles wandering around the graph,
 that the condensate of the ZRP is able to \col{jump to} all sites of $S_\star$ on the same time-scale.


\subsection{Outline of the proof}\label{subsec-outline}

\colo{As mentioned in the introduction, to prove our theorems we will
use potential theory methods.}
\col{In potential theory},  crucial quantities (at least in the case of reversible dynamics)
are capacities between sets.
Let $D_N$ denote the Dirichlet form associated to the generator $L_N$, that
for ${F\,:\, E_N\mapsto \mathbb R}$, is given by
\be
D_N(F) = \frac12 \sum_{x,y\in S}\sum_{\eta\in E_N} \mu_N(\eta) \eta_x \left(d_N+\eta_y\right) r(x,y) [F(\eta^{x,y})-F(\eta)]^2.
\ee
For two disjoint subsets $A,\,B\subset E_N$, the capacity between $A$ and $B$ can be defined through the
\emph{Dirichlet variational principle}
\be
\Capac_N(A,B) = \inf \{ D_N(F) : F\in \mathcal{F}_N(A,B)\}.
\ee
where
\be
\mathcal{F}_N(A,B) =\{ F: F(\eta)=1 {\rm \ for\ all\ }\eta\in A {\rm \ and\ } F(\eta)=0 {\rm \ for\ all\ }\eta\in B \}.
\ee
The unique minimizer of the Dirichlet principle is the \emph{equilibrium potential}, i.e., the harmonic function
 $h_{A,B}$ that solves the Dirichlet problem
\be\label{eq-Dirichlet-prb}\col{
\left\{
\begin{array}{ll}
  L_N h(\n)=0, & \mbox{ if } \n\notin A\cup B, \\
 h(\n)=1, & \mbox{ if } \n\in A,  \\
 h(\n)=0, & \mbox{ if } \n\in B.
\end{array}
\right.}
\ee
It can be easily checked that
\be\label{eq-potential}
h_{A,B}(\n)= \mathbb P_\n (\t_A< \t_B)\,.
\ee

As pointed out in \cite{BEGK01}\col{--}\cite{BEGK04}, one main fact about capacities in the framework of metastability,
is that they are related to the mean hitting time between sets through the formula
\be\label{eq-hitting-time}
\E_{\nu_{A,B}}(\t_B) = \frac{\mu_N (h_{A,B})}{\Capac_N(A,B)}\,,
\ee
where $\nu_{A,B}$ is a probability measure on $A$ such that, for all $\n \in A$\col{,}
\be
\nu_{A,B}(\n)= \frac{\mu_N(\n)\P_\n(\t_B<\t_{A}^+)}{\Capac_N(A,B)}\col{,}
\ee
and $\t^+_A$ is the return time to $A$, i.e.\col{,}
\be
\t_A^+=\inf\{t> 0\,:\, \n(t)\in A,\, \n(s)\neq \n(0) \mbox{ for some } s\in(0,t)\}\,.
\ee
Notice in particular, that when $A$ is just a singleton, as in the situations we are dealing with,
the measure $\nu_{A,B}$ is just a \col{Dirac delta} over the singleton.
The results stated in Theorems \col{\ref{thm-first-timescale}(i), \ref{thm-second-timescale}(i)} and \ref{thm-third-timescale}
are based o\col{n \eqref{eq-hitting-time}} for $A= \cE_N^x$ and $B= \cE_N(S_\star\setminus \{x\})$.

\col{Capacities also  play an important r\^ole in} \cite{BelLan10},
where  potential theory ideas and martingale methods have been combined
in order to prove the scaling limit of suitably speeded-up processes,
as the one that we have defined in (\ref{eq-metastable-process}).
In our setting, where metastable sets are given by singletons,
the approach of \cite{BelLan10}  to prove the convergence stated in Theorems \col{\ref{thm-first-timescale}(ii)}
and \col{\ref{thm-second-timescale}(ii)}, amounts to verify\col{ing}  the existence of
 a sequence $(\theta_N,\, N\geq 1)$ of positive numbers, tha{t c}orrespond\col{s} to the
chosen time-scale, such that, for any $x,y\in S_\star\,,\,x\neq y$, the following limit exists
\be\label{H0}
p(x,y)\,:=\,\lim_{N\to\infty} \theta_N r_N^{\cE_\star}\left(\cE_N^x,\cE_N^y\right)\,,
\ee
where $r_N^{\cE_\star}\left(\,\cdot\,,\,\cdot\,\right)$
are the  jump rates of the trace process $\n^{\cE_N^\star}(t)$ . 
The set of asymptotic rates $(p(x,y))_{x,y\in S_\star}$ identifies the limiting dynamics.
 By Lemma 6.8. in~\cite{BelLan10},
\be\label{eq-rate-capacity}
\begin{split}
\mu_N(\cE_N^x) r_N^{\cE_\star}\left(\cE_N^x,\cE_N^y\right) =&
\frac 12 \left[ \Capac_N\left(\cE_N^x, \cE_N(S_\star\setminus \{x\})\right)
+\Capac_N\left(\cE_N^y, \cE_N(S_\star\setminus \{y\})\right)\right.\\
&\left.-\Capac_N\left(\cE_N(\left\{x,y\right\}), \cE_N(S_\star\setminus \{x,y\})\right)
\right]\,,
\end{split}
\ee
so that, once more, the main tool to prove our main results  turns out to be the  computation of the  asymptotic capacities.
%

 The computation of the capacities in the first time-scale is performed in Section~\ref{sec-cap-first-timescale},
 while the capacities in the second and in the third time-scale are analysed
 in Sections~\ref{sec-cap-second-timescale} and \ref{sec-cap-third-timescale}, respectively.
 In all the three cases, we first provide a lower bound by restricting the Dirichlet form to a suitable subset of $E_N$ (or flow of configurations).
We then use the obtained insights to construct an approximated equilibrium potential and deduce, via the Dirichlet principle,
a matching upper bound.

In our lower bounds we repeatedly use the following lemma,
which uniformly bounds (parts of) the Dirichlet form from below
by the effective resistance of a linear electrical network.
\begin{lemma}\label{lem-eff-resistance}
Let $R_{i,i+1}>0, i=1,\ldots,k-1$. Then, for any function $F: \{1,\ldots,k\} \to \mathbb{R}$,
\be
\sum_{i=1}^{k-1} R_{i,i+1}[F(i+1)-F(i)]^2 \geq [F(k)-F(1)]^2
\left(\sum_{i=1}^{k-1} \frac{1}{R_{i,i+1}}\right)^{-1}.
\ee
\end{lemma}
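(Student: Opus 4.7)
The plan is to recognize this lemma as a direct application of the Cauchy--Schwarz inequality to a telescoping sum, with the natural interpretation that $\sum_i 1/R_{i,i+1}$ is (the reciprocal of) the effective conductance of resistors connected in series. The usefulness of this bound for the paper is precisely that any configuration flow through a chain of intermediate states can be reduced to a one-dimensional effective resistance, and the inequality is uniform in the choice of $F$.

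First I would write the trivial telescoping identity
\begin{equation*}
F(k)-F(1) \;=\; \sum_{i=1}^{k-1}\bigl[F(i+1)-F(i)\bigr] \;=\; \sum_{i=1}^{k-1} \sqrt{R_{i,i+1}}\bigl[F(i+1)-F(i)\bigr]\cdot \frac{1}{\sqrt{R_{i,i+1}}}.
\end{equation*}
Applying the Cauchy--Schwarz inequality with $a_i=\sqrt{R_{i,i+1}}[F(i+1)-F(i)]$ and $b_i=1/\sqrt{R_{i,i+1}}$ yields
\begin{equation*}
[F(k)-F(1)]^2 \;\leq\; \left(\sum_{i=1}^{k-1} R_{i,i+1}[F(i+1)-F(i)]^2\right)\left(\sum_{i=1}^{k-1} \frac{1}{R_{i,i+1}}\right),
\end{equation*}
which, upon dividing by $\sum_{i=1}^{k-1} 1/R_{i,i+1}>0$, is exactly the claim.

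There is no real obstacle here; the statement is an elementary inequality that could equally be derived from the Dirichlet variational characterization of the effective resistance of a linear resistor network. It is worth noting, as a sanity check, that the bound is tight: equality holds when $a_i$ is proportional to $b_i$, i.e., when the voltage drop $F(i+1)-F(i)$ across each resistor is proportional to $R_{i,i+1}$, which is Ohm's law for a constant current. This observation will be useful later in the paper when constructing matching upper bounds on capacities from approximate equilibrium potentials built by distributing the total potential drop proportionally to the local resistances along a path.
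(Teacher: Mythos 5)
Your proof is correct. It takes a genuinely different, more self-contained route than the paper: the paper normalizes $F$ via $g(i)=\bigl(F(i)-F(1)\bigr)/\bigl(F(k)-F(1)\bigr)$ (implicitly assuming $F(k)\neq F(1)$, the other case being trivial), bounds the resulting Dirichlet sum by the infimum over all functions with $g(1)=0$, $g(k)=1$, and then identifies that infimum as the effective conductance of a series chain, citing \cite{LPW} for the series law. You instead bypass the variational characterization entirely by telescoping $F(k)-F(1)$ and applying Cauchy--Schwarz to the factored summands $\sqrt{R_{i,i+1}}\,[F(i+1)-F(i)]$ and $1/\sqrt{R_{i,i+1}}$, which gives the inequality in one line without any reference and without the division by $F(k)-F(1)$. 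The two arguments are essentially dual to one another (the series law is itself usually proved by exactly this Cauchy--Schwarz computation), so the paper's version is more conceptually aligned with the potential-theoretic framework it leans on throughout, while yours is shorter and makes the equality case (constant current, $F(i+1)-F(i)\propto R_{i,i+1}$) transparent, which is indeed the heuristic behind the test functions used for the upper bounds.
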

\begin{proof}
Define the function
\be
g(i) = \frac{F(i)-F(1)}{F(k)-F(1)},
\ee
so that $g(1)=0$ and $g(k)=1$. Then,
\begin{align}
\sum_{i=1}^{k-1} R_{i,i+1}[F(i+1)-F(i)]^2  &=
[F(k)-F(1)]^2 \sum_{i=1}^{k-1} R_{i,i+1}[g(i+1)-g(i)]^2 \nn\\
&\geq  [F(k)-F(1)]^2 \inf_{h: h(1)=0,\atop h(k)=1} \sum_{i=1}^{k-1} R_{i,i+1}[h(i+1)-h(i)]^2 \nn\\
&= [F(k)-F(1)]^2 \left(\sum_{i=1}^{k-1} \frac{1}{R_{i,i+1}}\right)^{-1},
\end{align}
where the last equality follows using the series law for the effective capacity of a linear chain
(see, e.g., \cite{LPW}).
\end{proof}

\section{Metastable sets}\label{sec-partition}
In this section we study the partition function $Z_{N,S}$ and
characterize its asymptotic behavior in the limit $N\to\infty$.
This result \col{is} used
to prove that the configurations
in $\Delta= E_N\setminus \cE_N^\star$  are very unlikely in equilibrium and that $\mathcal{E}_N^x, x\in S_\star$
are the metastable sets (Proposition \ref{prop-met-sets}).
That in turn is the main ingredient for the proof of
\eqref{eq-0time} and \eqref{eq-0time2}
\col{in} Theorem\col{s} \ref{thm-first-timescale} and \ref{thm-second-timescale}, respectively.

We start analyzing the weight function $w_N(\ell)$.
\begin{lemma}\label{lem-asymp-wN}
For $d_N \log N \to 0$ as $N\to\infty$, and $0\leq k <N$,
\be
\lim_{N\to\infty} \frac{(d_N+k)w_N(k)}{d_N}  = \lim_{N\to\infty} \frac{(k+1)w_N(k+1)}{d_N}  = 1.
\ee
\end{lemma}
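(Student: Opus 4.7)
The plan is to observe first that the two quantities in the lemma are actually equal, and then to rewrite their common value as a telescoping product whose logarithm is easily controlled by the hypothesis $d_N\log N\to 0$.

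More precisely, using $\Gamma(z+1)=z\,\Gamma(z)$, I would note that
\[
(d_N+k)\,w_N(k)=\frac{(d_N+k)\,\Gamma(k+d_N)}{k!\,\Gamma(d_N)}=\frac{\Gamma(k+1+d_N)}{k!\,\Gamma(d_N)}=(k+1)\,w_N(k+1),
\]
so the two limits in the statement coincide. Dividing by $d_N$ and using $d_N\Gamma(d_N)=\Gamma(d_N+1)$, the common ratio becomes
\[
\frac{\Gamma(k+1+d_N)}{k!\,\Gamma(1+d_N)}=\prod_{j=1}^{k}\frac{j+d_N}{j}=\prod_{j=1}^{k}\Bigl(1+\frac{d_N}{j}\Bigr),
\]
with the empty product $=1$ when $k=0$.

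Next I would bound this product. The lower bound is immediate since every factor is $\geq 1$, giving $\prod_{j=1}^{k}(1+d_N/j)\geq 1$. For the upper bound, using $\log(1+x)\leq x$ for $x\geq 0$ and the harmonic sum estimate $\sum_{j=1}^k 1/j\leq 1+\log k\leq 1+\log N$,
\[
\log\prod_{j=1}^{k}\Bigl(1+\frac{d_N}{j}\Bigr)\leq d_N\sum_{j=1}^{k}\frac{1}{j}\leq d_N(1+\log N)\xrightarrow[N\to\infty]{}0,
\]
where the limit uses exactly the hypothesis $d_N\log N\to 0$. Combining the two bounds, the product converges to $1$ uniformly in $0\leq k<N$, which yields the claim.

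There is no serious obstacle here; the only subtle point is that the lemma is implicitly uniform in $k\in\{0,1,\dots,N-1\}$, and it is precisely this uniformity that forces the quantitative hypothesis $d_N\log N\to 0$ rather than merely $d_N\to 0$.
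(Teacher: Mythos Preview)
Your proof is correct and follows essentially the same route as the paper: both first identify the two expressions via $\Gamma(z+1)=z\Gamma(z)$, then rewrite the ratio as $\Gamma(k+1+d_N)/(k!\,\Gamma(1+d_N))$ and sandwich it between $1$ and something of the form $e^{O(d_N\log N)}$. The only difference is in the upper bound: the paper invokes Wendel's inequality $\Gamma(k+1+d_N)/\Gamma(k+1)\leq (k+1)^{d_N}$, whereas your telescoping product together with $\log(1+x)\leq x$ gives the same estimate by entirely elementary means, which is arguably cleaner since it avoids an external reference.
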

\begin{proof}
First note that
\be
(d_N+k)w_N(k) = (d_N+k) \frac{\Gamma(k+d_N)}{k!\Gamma(d_N)} = \frac{(k+1)\Gamma(k+1+d_N)}{(k+1)!\Gamma(d_N)} = (k+1)w_N(k+1),
\ee
so that indeed the two limits are the same.

We rewrite
\be
\frac{(k+1)w_N(k+1)}{d_N} = \frac{1}{d_N} \frac{(k+1)\Gamma(k+1+d_N)}{(k+1)!\Gamma(d_N)} = \frac{1}{\Gamma(d_N+1)} \frac{\Gamma(k+1+d_N)}{\Gamma(k+1)}.
\ee
Clearly,
\be
\lim_{N\to\infty} \frac{1}{\Gamma(d_N+1)}=\frac{1}{\Gamma(1)}=1,
\ee
and
\be
\frac{\Gamma(k+1+d_N)}{\Gamma(k+1)} \geq 1.
\ee
The upper bound follows from Wendel's inequality~\cite{Wen48}:
\be
\frac{\Gamma(k+1+d_N)}{\Gamma(k+1)} \leq (k+1)^{d_N} \leq N^{d_N} = e^{d_N \log N},
\ee
which indeed converges to $1$ by our assumption on $d_N$.
\end{proof}

We can now compute the limiting behavior of the partition function:
\begin{proposition}\label{prop-limZN}
For $d_N \log N \to 0$ as $N\to\infty$,
\be
\lim_{N\to\infty} \frac{N}{d_N} Z_{N,S} = \kappa_\star.
\ee
\end{proposition}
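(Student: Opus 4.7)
The plan is to decompose $Z_{N,S} = \sum_{\eta \in E_N} m_\star^\eta w_N(\eta)$ according to the support $\mathrm{supp}(\eta) = \{x : \eta_x > 0\}$ of the configuration, and to show that only configurations concentrated at a single site of $S_\star$ contribute to the leading order $d_N/N$, while everything else is $o(d_N/N)$.

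For the dominant term, I would look at $\eta = N\delta_x$. If $x\in S_\star$ then $m_\star(x)^N = 1$, so the contribution is $w_N(N)$. Lemma \ref{lem-asymp-wN} (together with the identity $(d_N+k)w_N(k) = (k+1)w_N(k+1)$ specialized at $k = N-1$) gives $w_N(N) \sim d_N/N$, so summing over $x\in S_\star$ yields $\kappa_\star\, w_N(N)(1+o(1))$. For $x\notin S_\star$ we have $m_\star(x) < 1$, so $m_\star(x)^N w_N(N)$ decays exponentially in $N$ and is absorbed into the $o(d_N/N)$ remainder.

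The more delicate step is controlling configurations with $|\mathrm{supp}(\eta)| \geq 2$. The key tool is a uniform upper bound on $w_N(k)$: writing
\be
w_N(k) \,=\, \frac{d_N}{k}\prod_{i=1}^{k-1}\Bigl(1 + \frac{d_N}{i}\Bigr), \qquad k\geq 1,
\ee
and estimating $\prod_{i=1}^{k-1}(1+d_N/i) \leq \exp(d_N(1+\log k)) \leq \exp(d_N(1+\log N))$, the hypothesis $d_N \log N \to 0$ gives $w_N(k) \leq (1+o(1))\, d_N/k$ uniformly in $1\leq k \leq N$. For a fixed subset $T\subseteq S$ with $|T| = j \geq 2$, summing over $\eta$ with $\mathrm{supp}(\eta) = T$ and using $m_\star \leq 1$ then bounds the contribution by
\be
(1+o(1))^j \, d_N^j \sum_{\substack{k_1+\cdots+k_j=N \\ k_i \geq 1}} \prod_{i=1}^j \frac{1}{k_i}.
\ee
The inner sum equals $[z^N](-\log(1-z))^j$, which by standard singularity analysis (or a short induction on $j$) is $O((\log N)^{j-1}/N)$. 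Hence the contribution from support size $j$ is $O((d_N/N)(d_N \log N)^{j-1}) = o(d_N/N)$, and the finite sum over $j\in\{2,\ldots,\kappa\}$ preserves this bound.

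The main obstacle is precisely the uniform control of $w_N(k)$ combined with the multiple-particle combinatorics: a crude volume bound on the compositions of $N$ into $j\geq 2$ parts would force a much stronger hypothesis (of the form $d_N N \to 0$), whereas the product formula plus the generating-function (or Stirling-number) estimate is just tight enough to operate under $d_N\log N\to 0$. Combining the single-site and multi-site estimates gives $Z_{N,S} = (d_N/N)(\kappa_\star + o(1))$, which is the claim.
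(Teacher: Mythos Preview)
Your proof is correct and takes a somewhat different route from the paper's. The paper proves the upper bound by induction on the number of sites: defining partial partition functions $Z_{n,k}$ over the first $k$ sites, it shows
\[
Z_{n,k} \;\le\; \frac{d_N(1+o(1))}{n}\sum_{s=1}^k m_\star(s)^n \;+\; C_k\,\frac{d_N^2\log n}{n},
\]
the induction step reducing to the same elementary estimate $\sum_{\ell=1}^{n-1}\frac{1}{\ell(n-\ell)}=O(\tfrac{\log n}{n})$ that underlies your $j=2$ case. Your argument instead decomposes $Z_{N,S}$ by the support of $\eta$ and handles all support sizes $j\ge 2$ in one stroke via the $j$-fold convolution $[z^N](-\log(1-z))^j = O((\log N)^{j-1}/N)$, which is a clean and slightly more conceptual packaging of the same mechanism. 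One practical advantage of the paper's inductive bound is that it is stated for all $n\le N$ and all subsets of sites, and is in fact reused later in the paper (to control $Z_{N-j,S\setminus\{x\}}$ minus its single-site contributions); your method would yield an equivalent estimate with essentially no extra work, but you may want to record that variant if you intend to use it downstream.
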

\begin{proof}
Since $Z_{N,S}$ includes the $\kappa_\star$ configurations where all particles are on one of the sites in $S_\star$, it is clear that
\be\label{eq-lbZn}
Z_{N,S} \geq \kappa_\star w_N(N) = \kappa_\star \frac{d_N}{N}(1+o(1)),
\ee
by Lemma~\ref{lem-asymp-wN}.

 To prove the upper bound we proceed by induction.
 We label the sites by $1,\ldots,\kappa$,
 and let $E_{n,k}$ be the set of configurations
 with $n$ particles on the first $k$ sites,
 i.e., $E_{n,k} = E_{n,\{1,\ldots,k\}}$.
 Let us define, for $1\leq n \leq N$,
\be
Z_{n,k} = \sum_{\eta \in E_{n,k}} m_\star^{\eta} w_N(\eta).
\ee
By induction over $k$, we aim to prove that, for all $1\leq n \leq N$ , $1\leq k\leq\kappa$, and $N$ large enough,
\be\label{eq-inductionboundZnk}
Z_{n,k} \leq \frac{d_N (1+o(1))}{n}\sum_{s=1}^k m_\star(s)^n + C_k \frac{d_N^2 \log n}{n},
\ee
where $C_k<\infty$ is a constant that only depends on $k$ and may change from line to line.

We start the induction with $k=1$, for which clearly, by Lemma~\ref{lem-asymp-wN},
\be
Z_{n,1} = m_\star(1)^n w_N(n) = \frac{d_N (1+o(1))}{n} m_\star(1)^n\,.
\ee
Assume  that (\ref{eq-inductionboundZnk}) holds true for $k-1$ and for all $1\leq n \leq N$.
Then, using the induction hypothesis and Lemma~\ref{lem-asymp-wN},
\begin{align}
Z_{n,k} &= m_\star(k)^n w_N(n) + Z_{n,k-1}+\sum_{\ell=1}^{n-1} m_\star(k)^\ell w_N(\ell) Z_{n-\ell,k-1} \nn\\
&\leq  \frac{d_N (1+o(1))}{n}  \left(m_\star(k)^n + \sum_{s=1}^{k-1} m_\star(s)^n\right) +C_{k-1}\frac{d_N^2 \log n}{n}\\
&\qquad + \sum_{\ell=1}^{n-1} m_\star(k)^\ell  \frac{d_N(1+o(1))}{\ell} \left(\left(\sum_{s=1}^{k-1} m_\star(s)^{n-\ell}\right)
\frac{d_N(1+o(1))}{(n-\ell)}+ C_{k-1} \frac{d_N^2 \log(n-\ell)}{n-\ell}\right).\nn
\end{align}
Using that $m_\star(k)\leq1$ and $d_N \log(n-\ell) = o(1)$ by assumption, and  for $N$ large enough,
the sum in $\ell$ can be bounded from above by
\be\label{eq-bderrorsum}
C_k d_N^2  \sum_{\ell=1}^{n-1}  \frac{1}{\ell(n-\ell)} = 2C_k d_N^2 \sum_{\ell=1}^{n/2} 
\frac{1}{\ell(n-\ell)} = 2C_k \frac{d_N^2}{n} \sum_{\ell=1}^{n/2} \frac{1}{\ell(1-\ell/n)} \,.
\ee
Since $\ell\leq n/2$ we have that $(1-\ell/n) \geq \frac12$. Hence, we can bound~\eqref{eq-bderrorsum} from above by
\be
4 C_k \frac{d_N^2}{n} \sum_{\ell=1}^{n/2} \frac{1}{\ell} \leq 4 C_k \frac{d_N^2 \log n}{n} .
\ee
This proves the induction step. Thus,
\begin{align}
Z_{N,\kappa} &\leq \frac{d_N (1+o(1))}{N}\sum_{s=1}^\kappa m_\star(s)^N + C_\kappa \frac{d_N^2 \log N}{N}\\
&= \kappa_\star \frac{d_N (1+o(1))}{N} + \frac{d_N (1+o(1))}{N} \left(\left(\sum_{s\notin S_\star} m_\star(s)^N\right) + C_\kappa d_N \log N\right) \nn\\
&= \kappa_\star  \frac{d_N}{N}(1+o(1)).
\end{align}
The proposition follows by combining this upper bound with the lower bound in~\eqref{eq-lbZn}.
\end{proof}

Combining these results, Proposition \ref{prop-met-sets} follows trivially:
\begin{proof}[Proof of Proposition \ref{prop-met-sets}]
For all $x\in S_\star$\col{, b}y Lemma \ref{lem-asymp-wN} and Proposition \ref{prop-limZN},
\be
\lim_{N\to\infty}\m_N(\cE_N^x) =\lim_{N\to\infty} \frac{w_N(N)}{Z_{N,S}}=
\lim_{N\to\infty}\frac{N\cdot w_N(N)}{d_N}\frac{d_N}{N \cdot Z_{N,S}}
= \frac{1}{\kappa^*}\,.
\ee
As a consequence,
\be
\lim_{N\to\infty}\m_N(\Delta) = 1- \sum_{x\in S_\star}\lim_{N\to\infty}\m_N(\cE_N^x)=0\,.
\ee
\end{proof}

\section{Dynamics of the condensate on the first  time-scale}\label{sec-cap-first-timescale}
In this section we analyze capacities on the time-scale $1/d_N$ and
prove  Theorem \ref{thm-first-timescale}.
 We prove the lower bound on capacities in Section~\ref{sec-cap-scale1-lb},
 the upper bound in Section~\ref{sec-cap-scale1-ub},
 and we give the proof of Theorem \ref{thm-first-timescale} in Section~\ref{sec-Proof-thm1}.

\subsection{Lower bound on capacities}\label{sec-cap-scale1-lb}
\begin{proposition} \label{prop-capacities-between-metastable-sets-lb}
For a nonempty subset $S_\star^1 \subsetneq S_\star$,
 let $S_\star^2 = S_\star \setminus S_\star^1$. Then, for $d_N \log N \to 0$ as $N\to\infty$,
\be
\col{\liminf_{N\rightarrow\infty}} \frac{1}{d_N} \Capac_N \left(\mathcal{E}_N(S_\star^1),\mathcal{E}_N(S_\star^2)\right) \geq \frac{1}{\kappa_\star} \sum_{x \in S_\star^1}\sum_{y \in S_\star^2}r(x,y).
\ee
\end{proposition}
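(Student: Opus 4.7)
The plan is to apply the Dirichlet variational principle with $F=h_{A,B}$ the equilibrium potential (so that $\Capac_N = D_N(F)$) and to bound $D_N(F)$ from below by restricting to a family of edge-disjoint one-dimensional paths in $E_N$, then to invoke Lemma~\ref{lem-eff-resistance} on each. For each pair $(x,y)\in S_\star^1\times S_\star^2$ with $r(x,y)>0$, let $\eta^k_{x,y}$ denote the configuration with $N-k$ particles at $x$, $k$ particles at $y$, and none elsewhere, for $k=0,1,\dots,N$. Then $\eta^0_{x,y}\in\mathcal{E}_N^x\subset\mathcal{E}_N(S_\star^1)$, $\eta^N_{x,y}\in\mathcal{E}_N^y\subset\mathcal{E}_N(S_\star^2)$, and consecutive configurations along the path differ by a single particle jump between $x$ and $y$. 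For two distinct pairs $(x,y)\neq(x',y')$ the corresponding configurations are supported on different two-element subsets of $S$, so the paths intersect in at most one extremal configuration in $\mathcal{E}_N^\star$; in particular, their edge sets in the configuration graph are disjoint.

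Since $m_\star(x)=m_\star(y)=1$ for $x,y\in S_\star$, the edge $(\eta^k_{x,y},\eta^{k+1}_{x,y})$ contributes to $D_N(F)$ with conductance
\begin{equation}
c^{x,y}_k \;=\; \frac{r(x,y)}{Z_{N,S}}\,(N-k)\,w_N(N-k)\,(d_N+k)\,w_N(k).
\end{equation}
Using $F=1$ on $\eta^0_{x,y}$ and $F=0$ on $\eta^N_{x,y}$, edge-disjointness together with Lemma~\ref{lem-eff-resistance} applied to each path gives
\begin{equation}
\Capac_N\bigl(\mathcal{E}_N(S_\star^1),\mathcal{E}_N(S_\star^2)\bigr) \;\ge\; \sum_{x\in S_\star^1}\sum_{y\in S_\star^2}\left(\sum_{k=0}^{N-1}\frac{1}{c^{x,y}_k}\right)^{\!-1},
\end{equation}
where pairs with $r(x,y)=0$ are tacitly omitted, as they contribute nothing to either side.

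Lemma~\ref{lem-asymp-wN} and the Wendel bound $\Gamma(k+1+d_N)/\Gamma(k+1)\le(k+1)^{d_N}\le N^{d_N}=e^{d_N\log N}\to 1$ show that $(d_N+k)w_N(k)$ and $(N-k)w_N(N-k)$ both equal $d_N(1+o(1))$ \emph{uniformly} in $0\le k\le N-1$. Combined with $Z_{N,S}=\kappa_\star d_N/N\,(1+o(1))$ from Proposition~\ref{prop-limZN}, this produces the uniform estimate
\begin{equation}
c^{x,y}_k \;=\; \frac{N\,r(x,y)\,d_N}{\kappa_\star}\,\bigl(1+o(1)\bigr),
\end{equation}
so that $\sum_{k=0}^{N-1}1/c^{x,y}_k = \kappa_\star/(r(x,y)\,d_N)\,(1+o(1))$. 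Substituting into the previous display, dividing by $d_N$, and taking the liminf yields the claimed bound.

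The delicate point is the uniformity in $k$ of the estimate on $c^{x,y}_k$: since $\sum_{k=0}^{N-1}1/c^{x,y}_k$ has $N$ summands, any $k$-dependent error that fails to be $o(1)$ uniformly would destroy the bound; this uniformity is precisely what the Wendel-type estimate in the proof of Lemma~\ref{lem-asymp-wN} provides. The remaining ingredients---combinatorial edge-disjointness of the axis paths and the series law on each---are routine.
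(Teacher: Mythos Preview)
Your proof is correct and follows essentially the same route as the paper's: both restrict the Dirichlet form to the one-dimensional ``tubes'' $A_N^{x,y}=\{\eta:\eta_x+\eta_y=N\}$ for $x\in S_\star^1$, $y\in S_\star^2$, apply Lemma~\ref{lem-eff-resistance} along each tube, and finish with the uniform asymptotics of Lemma~\ref{lem-asymp-wN} together with Proposition~\ref{prop-limZN}. Your explicit remark on the uniformity in $k$ (via Wendel's inequality) makes a point the paper leaves implicit in its equation for $w_N(\ell)w_N(N-\ell)\,\ell(d_N+N-\ell)=d_N^2(1+o(1))$, but otherwise the arguments coincide.
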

\begin{proof}
Let
\be
A_N^{x,y} = \left\{\eta\in E_N \,:\, \eta_x+\eta_y = N \right\}.
\ee
Fix a function $F\in \mathcal{F}_N(\mathcal{E}_N(S_\star^1),\mathcal{E}_N(S_\star^2))$. Then,
\begin{align}
D_N(F) &= \frac12 \sum_{x,y\in S}\sum_{\eta \in E_N} \mu_N(\eta) \eta_x \left(d_N+\eta_y\right) r(x,y) [F(\eta^{x,y})-F(\eta)]^2 \nn\\
& \geq \frac12 \sum_{x\in S_\star^1}\sum_{y\in S_\star^2} \sum_{\eta\in A_N^{x,y}}\mu_N(\eta) \eta_x \left(d_N+\eta_y\right) r(x,y) [F(\eta^{x,y})-F(\eta)]^2 \nn\\
& \qquad + \frac12 \sum_{y\in S_\star^2}\sum_{x\in S_\star^1} \sum_{\eta\in A_N^{x,y}}\mu_N(\eta) \eta_y \left(d_N+\eta_x\right) r(y,x) [F(\eta^{y,x})-F(\eta)]^2 \nn\\
&  = \sum_{x\in S_\star^1}\sum_{y\in S_\star^2}  r(x,y) \sum_{\eta\in A_N^{x,y}}\mu_N(\eta) \eta_x \left(d_N+\eta_y\right)  [F(\eta^{x,y})-F(\eta)]^2,
\end{align}
by reversibility.
Note that  the set $A_N^{x,y}$  can be parameterized by the number of particles at $x$,
and is thus a one-dimensional set.
For a fixed couple $x,y\in S_\star$, let $G$ be the restriction of $F$ to the set $A_N^{x,y}$, i.e.,
for $\n\in A_N^{x,y}$ such that $\eta_x = \ell$,  define $G(\ell):= F(\eta)$.
Then we can rewrite
\begin{align}\label{eq-diri1}
\sum_{\eta\in A_N^{x,y}}\mu_N(\eta) &\eta_x \left(d_N+\eta_y\right)  [F(\eta^{x,y})-F(\eta)]^2 \nn\\
&= \sum_{\ell =1}^N \frac{w_N(\ell)w_N(N-\ell)}{Z_{N,S}} \ell \left(d_N+N-\ell\right)  [G(\ell-1)-G(\ell)]^2,
\end{align}
where we used that $m_\star(x)=m_\star(y)=1$.

Using Lemma~\ref{lem-asymp-wN} for all $1\leq\ell\leq N$,
\be\label{eq-bounddnsq}
w_N(\ell)w_N(N-\ell) \ell \left(d_N+N-\ell\right) = d_N^2 (1+o(1)),
\ee
so that~\eqref{eq-diri1} equals
\be
\frac{d_N^2 (1+o(1))}{Z_{N,S}} \sum_{\ell =1}^N [G(\ell-1)-G(\ell)]^2.
\ee
Note that $G(0)=0$ and $G(N)=1$, so that it follows from Lemma~\ref{lem-eff-resistance} that
\be
\sum_{\ell =1}^N [G(\ell-1)-G(\ell)]^2\geq \frac{1}{N}.
\ee
Hence,
\be
D_N(F) \geq \frac{d_N^2}{N Z_{N,S}} \sum_{x\in S_\star^1}\sum_{y\in S_\star^2} r(x,y),
\ee
and the proposition follows from Proposition~\ref{prop-limZN}.
\end{proof}


\subsection{Upper bound on capacities}\label{sec-cap-scale1-ub}
\begin{proposition} \label{prop-capacities-between-metastable-sets-ub}
For a nonempty subset $S_\star^1 \subsetneq S_\star$,
 let $S_\star^2 = S_\star \setminus S_\star^1$. Then, for $d_N \log N \to 0$ as $N\to\infty$,
\be
\col{\limsup_{N\rightarrow\infty}} \frac{1}{d_N} \Capac_N \left(\mathcal{E}_N(S_\star^1),\mathcal{E}_N(S_\star^2)\right) \leq \frac{1}{\kappa_\star} \sum_{x \in S_\star^1}\sum_{y \in S_\star^2}r(x,y).
\ee
\end{proposition}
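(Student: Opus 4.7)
The plan is to apply the Dirichlet variational principle with the explicit linear test function
\be
F(\eta) \,:=\, \frac{1}{N}\sum_{x \in S_\star^1} \eta_x\,,
\ee
which equals $1$ on $\cE_N(S_\star^1)$ and $0$ on $\cE_N(S_\star^2)$, and therefore lies in $\cF_N(\cE_N(S_\star^1),\cE_N(S_\star^2))$. The key observation is that $F(\eta^{x,y})-F(\eta) = \frac{1}{N}(\ind\{y\in S_\star^1\}-\ind\{x\in S_\star^1\})$, so $[F(\eta^{x,y})-F(\eta)]^2$ equals $1/N^2$ precisely when exactly one of $x,y$ belongs to $S_\star^1$, and vanishes otherwise. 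This choice is natural because on each slice $A_N^{x,y}$ with $x\in S_\star^1$, $y\in S_\star^2$, the restriction $G(\ell)=\ell/N$ of $F$ saturates the one-dimensional lower bound used in Proposition~\ref{prop-capacities-between-metastable-sets-lb}.

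I would split $D_N(F)$ into two blocks: (a) ordered pairs with one endpoint in $S_\star^1$ and the other in $S_\star^2$, and (b) ordered pairs with one endpoint in $S_\star^1$ and the other in $S\setminus S_\star$. For block (a), since $m_\star(x)=m_\star(y)=1$, parameterizing by $\eta_x=k$, $\eta_y=\ell$ and writing $\tilde S := S\setminus\{x,y\}$ yields
\be
\sum_{\eta}\mu_N(\eta)\,\eta_x(d_N+\eta_y) \,=\, \frac{1}{Z_{N,S}}\sum_{k=1}^N\sum_{\ell=0}^{N-k} k\,w_N(k)\,(d_N+\ell)\,w_N(\ell)\,Z_{N-k-\ell,\tilde S}\,.
\ee
Lemma~\ref{lem-asymp-wN} gives $k\,w_N(k)\,(d_N+\ell)\,w_N(\ell)=d_N^2(1+o(1))$ uniformly in $k\geq 1$, $\ell\geq 0$. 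After substituting $m:=N-k-\ell$ the residual factor becomes $\sum_{m=0}^{N-1}(N-m)\,Z_{m,\tilde S}$, in which the $m=0$ contribution equals exactly $N$; by Proposition~\ref{prop-limZN} applied to $\tilde S$, all other terms together are $O(d_N\,N\log N)=o(N)$ thanks to the hypothesis $d_N\log N\to 0$ (if $\kappa_\star=2$ the estimate is even easier because $Z_{m,\tilde S}$ decays as $m$ grows). Combined with $Z_{N,S}=\kappa_\star d_N/N\,(1+o(1))$ this produces $\sum_{\eta}\mu_N(\eta)\eta_x(d_N+\eta_y)=d_N N^2(1+o(1))/\kappa_\star$. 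Using the $x\leftrightarrow y$ symmetry of $\mu_N$ (valid since both sites are in $S_\star$) together with $r(x,y)=r(y,x)$, the two orderings contribute equally, so block (a) contributes $(d_N/\kappa_\star)\sum_{x\in S_\star^1,y\in S_\star^2}r(x,y)\,(1+o(1))$ to $D_N(F)$.

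For block (b), when $y\in S\setminus S_\star$ the analogous expression carries an additional factor $m_\star(y)^\ell=:c^\ell$ with $c<1$. Reversing the order of summation in $\sum_{k,\ell}c^\ell\,Z_{N-k-\ell,\tilde S}$ and bounding the geometric series produces the upper bound $(1-c)^{-1}\sum_{m}Z_{m,\tilde S}$; a second application of Proposition~\ref{prop-limZN} to $\tilde S$ (which still contains $\kappa_\star-1\geq 1$ maximal sites) gives $\sum_m Z_{m,\tilde S}=1+O(d_N\log N)=O(1)$. Hence $\sum_\eta \mu_N(\eta)\eta_x(d_N+\eta_y)=O(d_N N)$, and the contribution of block (b) to $D_N(F)$ is $O(d_N/N)=o(d_N)$, which is negligible.

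Adding the two blocks and using $\Capac_N(\cE_N(S_\star^1),\cE_N(S_\star^2))\leq D_N(F)$, the required upper bound follows by passing to the limit superior. The main obstacle is the delicate asymptotic control of the residual sum $\sum_{m=0}^{N-1}(N-m)Z_{m,\tilde S}$ in block (a): without the hypothesis $d_N\log N\to 0$, contributions from configurations with particles scattered across the remaining $\kappa_\star-2$ maximal sites would produce an extra logarithmic factor that would spoil the exact prefactor $1/\kappa_\star$.
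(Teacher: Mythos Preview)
Your argument is correct and constitutes a genuinely different---and more elementary---route than the paper's.

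The paper chooses the smoothed test function $F_{S^1}(\eta)=\sum_{x\in S^1}\phi_\varepsilon(\eta_x/N)$ and decomposes the Dirichlet form according to \emph{configurations}: first the tubes $A_N^{x,y}=\{\eta:\eta_x+\eta_y=N\}$ (Lemmas~\ref{lem-insidetubesS1S2} and~\ref{lem-insidetubesS1}), then everything outside the tubes (Lemma~\ref{lem-outsidetubes}); the smoothing is what lets them restrict the effective support to $\varepsilon N\le\eta_x\le(1-\varepsilon)N$ and kill boundary effects, at the cost of an $\varepsilon\to0$ limit at the end. Your approach instead takes the exact linear function and decomposes according to the \emph{jump type} $(x,y)$: since $[F(\eta^{x,y})-F(\eta)]^2$ factors out as $1/N^2$, each block reduces to estimating the first moment $\sum_\eta\mu_N(\eta)\,\eta_x(d_N+\eta_y)$, which you control via the partition-function bound~\eqref{eq-inductionboundZnk} on the residual set $\tilde S$. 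This avoids the smoothing and the tube decomposition entirely. Two small remarks: the uniformity you need in Lemma~\ref{lem-asymp-wN} is indeed available from its proof (the Wendel bound $\Gamma(k+1+d_N)/\Gamma(k+1)\le N^{d_N}$ is uniform in $0\le k\le N-1$); and what you invoke as ``Proposition~\ref{prop-limZN} applied to $\tilde S$'' is really the inductive upper bound~\eqref{eq-inductionboundZnk} from its proof, which is the statement that gives $Z_{m,\tilde S}=O(d_N/m)$ uniformly.

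What each approach buys: your argument is shorter, has no auxiliary parameter, and makes transparent that the only nonnegligible contribution is from jumps between $S_\star^1$ and $S_\star^2$. The paper's smoothing technique is the standard Beltr\'an--Landim device; while heavier here, it is the template that extends to the second and third time-scales (Sections~\ref{sec-cap-second-timescale}--\ref{sec-cap-third-timescale}), where a purely linear test function no longer captures the correct equilibrium potential.
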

The strategy of the proof is to provide a suitable test function $F\in \mathcal{F}_N(\mathcal{E}(S_\star^1),\mathcal{E}(S_\star^2))$
to plug in the Dirichlet principle
\be\label{eq-capasinf}
\Capac_N(\mathcal{E}(S_\star^1),\mathcal{E}(S_\star^2)) = \inf \{ D_N(F) : F\in \mathcal{F}_N(\mathcal{E}(S_\star^1),\mathcal{E}(S_\star^2))\}\,.
\ee
We first describe how we construct the test function, and then study the corresponding Dirichlet form
by splitting it into several parts, and analyzing each of them separately.
We conclude the section collecting all the results and providing the proof of the above proposition.

\paragraph*{Construction of the test function.}
Inspired by the lower bound derived in the previous section, we want $F(\eta)$ to be approximately equal to $G^*(\eta_x)$ , where $G^*$
is the minimizer of ${\sum_{\ell =1}^N [G(\ell-1)-G(\ell)]^2}$, which is given by
\be
G^*(\ell)  = \frac{\ell}{N}.
\ee
To avoid difficulties for small and large values of $\eta_x$,
we choose an arbitrary small $\varepsilon>0$ and set the function equal to $0$
if $\eta_x/N \leq \varepsilon$, and equal to $1$ if $\eta_x/N \geq 1-\varepsilon$.

For values $ \eta_x/N \in (\varepsilon,1-\varepsilon)$ ,
we approximate $G^*(\n_x)$ with a smooth function $\phi_\varepsilon(\eta_x/N)$
defined as in~\cite{BelLan12}.
That is, $\phi_\varepsilon:[0,1]\to[0,1]$ is a smooth nondecreasing function satisfying
${\phi_\varepsilon(t)+\phi_\varepsilon(1-t)=1}$ for all $t\in[0,1]$,
$\phi_\varepsilon(t)=0$ for $t\leq\varepsilon$,
$\phi_\varepsilon(t)=1$ for $t\geq 1-\varepsilon$,
and $\phi_\varepsilon'(t)\leq 1+\sqrt{\varepsilon}$ for all $t\in[0,1]$.
Such a function exists since $(1+\sqrt{\varepsilon})$ times
the length of the interval $[\varepsilon,1-\varepsilon]$ is strictly bigger than $1$ for $\varepsilon$ small enough.

All together, for any $x\in S$, we define the functions $F_x: E_N\mapsto \R$ as
\be
F_x(\eta) = \phi_\varepsilon(\eta_x/N),
\ee
and similarly, for $S^1 \subset S$, the functions $F_{S^1}: E_N\mapsto \R$ as
\be
F_{S^1}(\eta) = \sum_{x \in S^1} F_x(\eta).
\ee

\paragraph*{Split of the Dirichlet form.}
To split the Dirichlet form, define, for a set $A\subseteq E_N$,
\be
D_N(F,A) = \frac12 \sum_{\eta\in A} \mu_N(\eta) \sum_{z,w\in S} \eta_z (d_N+\eta_w)r(z,w) [F(\eta^{z,w})-F(\eta)]^2.
\ee
Also define
\be
A_N^x = \bigcup_{y\in S\setminus \{x\}} A_N^{x,y}.
\ee
We can then write
\be\label{eq-splitD1}
D_N(F_{S^1})=D_N(F_{S^1}, E_N) = D_N(F_{S^1}, \bigcup_{x\in S^1} A_N^x) + D_N(F_{S^1}, E_N \setminus\bigcup_{x\in S^1} A_N^x)\col{.}
\ee
By definition
\be
D_N(F_{S^1}, \bigcup_{x\in S^1} A_N^x) = D_N(F_{S^1}, \bigcup_{x\in S^1} \bigcup_{y\in S\setminus\{x\}}A_N^{x,y}).
\ee
If $\{x_1,y_1\} \neq \{x_2,y_2\}$ and $\eta\in A_N^{x_1,y_1} \cap A_N^{x_2,y_2}$, then either $x_1=x_2$ and $\eta_{x_1}=N$, or $y_1=y_2$ and $\eta_{y_1}=N$. In both cases, $F_{S^1}(\eta^{z,w}) = F_{S^1}(\eta)$ for all $z,w\in S$ because of the definition of $\phi_\varepsilon$. Therefore, we can write
\be\label{eq-splitD2}
D_N(F_{S^1}, \bigcup_{x\in S^1} A_N^x) =\sum_{x\in S^1} \sum_{y \in S^2} D_N(F_{S^1}, A_N^{x,y})  + \frac12 \sum_{x,y\in S^1} D_N(F_{S^1}, A_N^{x,y}),
\ee
where $S^2 = S\setminus S^1$.
\paragraph*{Dirichlet form inside tubes.}
The main contribution to the Dirichlet form comes from configurations
inside tubes between sites $x\in S^1, y\in S^2$, as the next lemma shows.
\begin{lemma}\label{lem-insidetubesS1S2}
Let $x\in S^1$ and $y\in S^2$. Then, for $d_N \log N \to 0$ as $N\to\infty$,
\be
\lim_{\varepsilon\to0}\col{\limsup_{N\to\infty}}\frac{1}{d_N} D_N(F_{S^1}, A_N^{x,y}) \leq \frac{1}{\kappa_\star} r(x,y) \1 \{x,y\in S_\star\}.
\ee
\end{lemma}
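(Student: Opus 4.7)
The plan is to parametrize $A_N^{x,y}$ by $\ell := \eta_x \in \{0,\ldots,N\}$ (so $\eta_y = N-\ell$ and $\eta_z = 0$ otherwise), and to set $G(\ell) := \phi_\varepsilon(\ell/N)$. Because $\phi_\varepsilon$ vanishes on $[0,\varepsilon]$, for $N$ large enough any particle placed at a fresh site contributes $\phi_\varepsilon(1/N)=0$; consequently, for $\eta\in A_N^{x,y}$ one has $F_{S^1}(\eta)=G(\ell)$, and for every jump $(z,w)$ with $z\in\{x,y\}$ (the only choices with $\eta_z>0$) the increment $F_{S^1}(\eta^{z,w})-F_{S^1}(\eta)$ equals $G(\ell-1)-G(\ell)$ if $z=x$ and $G(\ell+1)-G(\ell)$ if $z=y$, independently of $w$. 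Moreover, $G(\ell)-G(\ell-1)=0$ unless $\ell\in[\varepsilon N,(1-\varepsilon) N]$.

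With this, the restricted Dirichlet form splits into a main part coming from the $x\leftrightarrow y$ exchanges (which stay inside $A_N^{x,y}$, with rates $\ell(d_N+N-\ell)r(x,y)$ and $(N-\ell)(d_N+\ell)r(y,x)$ coinciding by detailed balance, so that a shift-of-index identity absorbs the $1/2$ factor) and an error part from jumps $x\to w$ or $y\to w$ with $w\neq x,y$ (rate only $\ell d_N r(x,w)$ or $(N-\ell)d_N r(y,w)$, since $\eta_w=0$ there). By Lemma~\ref{lem-asymp-wN}, uniformly in $1\le\ell\le N$,
\[
\ell(d_N+N-\ell)\,w_N(\ell)\,w_N(N-\ell)=d_N^2(1+o(1)),
\]
so, using $\mu_N(\eta_\ell)=m_\star(x)^{\ell}m_\star(y)^{N-\ell}w_N(\ell)w_N(N-\ell)/Z_{N,S}$, the main part reads
\[
\frac{d_N^2\,r(x,y)}{Z_{N,S}}(1+o(1))\sum_{\ell=1}^{N} m_\star(x)^{\ell}\,m_\star(y)^{N-\ell}\,[G(\ell)-G(\ell-1)]^2.
\]

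When $x,y\in S_\star$ both $m_\star$-factors equal $1$; Proposition~\ref{prop-limZN} gives $Z_{N,S}=\frac{d_N\kappa_\star}{N}(1+o(1))$, and Cauchy--Schwarz combined with $\phi_\varepsilon'\le 1+\sqrt{\varepsilon}$ yields $\sum_{\ell=1}^N[G(\ell)-G(\ell-1)]^2\le \frac{1}{N}\int_0^1 \phi_\varepsilon'(t)^2\,dt \le \frac{1+\sqrt\varepsilon}{N}$. Combining these, the main part is bounded by $\frac{d_N\,r(x,y)(1+\sqrt\varepsilon)}{\kappa_\star}(1+o(1))$. If instead $y\notin S_\star$ (symmetrically $x\notin S_\star$), then non-zero gradients force $\ell\in[\varepsilon N,(1-\varepsilon) N]$, so $m_\star(y)^{N-\ell}\le m_\star(y)^{\varepsilon N}$ is exponentially small and the main part is $o(d_N)$. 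The error part is handled by the same chain of inequalities but carries an additional factor $d_N/(d_N+N-\ell)=O(d_N/N)$ compared with the main part, hence is also $o(d_N)$. Dividing by $d_N$, taking $\limsup_{N\to\infty}$ and then $\varepsilon\to 0$ yields the desired inequality.

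The main subtlety is to ensure that the various sources of smallness, namely the gradient factor $[G(\ell)-G(\ell-1)]^2=O(1/N^2)$, the suppression $m_\star(y)^{N-\ell}$ when $y\notin S_\star$, and the $d_N$ prefactor appearing in the error rates, are all tracked uniformly in $\ell$; this is where Lemma~\ref{lem-asymp-wN} is essential. A potential $\log N$-type divergence from $\sum_\ell w_N(N-\ell)$ inside the error part is prevented by the flatness of $\phi_\varepsilon$ near the endpoints, which confines the support of the gradient to $\ell\in[\varepsilon N,(1-\varepsilon) N]$. With this bookkeeping in place, the constant $r(x,y)/\kappa_\star$ arises cleanly from the product of the rate $r(x,y)$ (of the $x\leftrightarrow y$ exchange) and $1/\kappa_\star$ (from the normalization $Z_{N,S}$).
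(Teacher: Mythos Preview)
The proposal is correct and takes essentially the same approach as the paper: parametrize $A_N^{x,y}$ by $\ell=\eta_x$, reduce $F_{S^1}$ to $F_x$ on the tube, use Lemma~\ref{lem-asymp-wN} to extract the factor $d_N^2$, bound the squared-gradient sum by $O(1/N)$, and apply Proposition~\ref{prop-limZN}. One harmless slip: for $z=y$ and $w\ne x$ the increment $F_{S^1}(\eta^{y,w})-F_{S^1}(\eta)$ is actually $0$ (since $\eta_x$ is unchanged and $\phi_\varepsilon(1/N)=0$), not $G(\ell+1)-G(\ell)$ as you write; this only makes your error term an overcount, so the upper bound is unaffected, and your Cauchy--Schwarz bound $\sum_\ell[G(\ell)-G(\ell-1)]^2\le (1+\sqrt\varepsilon)/N$ is a clean variant of the paper's pointwise estimate.
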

\begin{proof}
Note that if $\eta \in A_N^{x,y}$, then for $v\in S^1\setminus\{x\}$ we have that $F_v(\eta)=F_v(\eta^{z,w})=0$, since $\eta_v,\eta_v^{z,w}\leq 1<\varepsilon N$. Hence,
\be
D_N(F_{S^1}, A_N^{x,y}) =  D_N(F_x, A_N^{x,y}).
\ee
Note also that for configurations such that $\eta_x<\varepsilon N$, or $\eta_x>(1-\varepsilon)N$ ,
we have that ${F_x(\eta^{z,w})=F_x(\eta)}$.
Hence, we can restrict the sum to configurations $\n$ such that  $\varepsilon N\leq \eta_x \leq (1-\varepsilon)N$ and get
\begin{align}
D_N(F_x, A_N^{x,y}) &= \frac{1}{2 Z_{N,S}} \sum_{j=\varepsilon N}^{(1-\varepsilon)N} m_\star(x)^{j} m_\star(y)^{N-j}w_N(j) w_N(N-j) \nn\\
&\qquad\ \qquad\ \qquad \ \qquad \ \qquad \sum_{z,w \in S} \eta_z (d_N+\eta_w) r(z,w) [F_x(\eta^{z,w})-F_x(\eta)]^2.
\end{align}
Since $F_x(\eta)=\phi_\varepsilon(\eta_x)$ does not change if the number of particles on $x$ stays the same,
we can further rewrite this, also using reversibility, as
\begin{align}
D_N(F_x, A_N^{x,y})& =\frac{1}{Z_{N,S}} \sum_{j=\varepsilon N}^{(1-\varepsilon)N} m_\star(x)^{j} m_\star(y)^{N-j} w_N(j) w_N(N-j) \nn\\
&\qquad\ \qquad\ \qquad \ \qquad \ \qquad \biggl\{  j(d_N+N-j) r(x,y)  \Bigl[\phi_\varepsilon\bigl(\sfrac{j-1}{N}\bigr)-\phi_\varepsilon\bigl(\sfrac{j}{N}\bigr)\Bigr]^2 \\
&\qquad\ \qquad\ \qquad \ \qquad \ \qquad\ \qquad +\sum_{z \in S\setminus\{x,y\}} j d_N r(x,z)
\Bigl[\phi_\varepsilon\bigl(\sfrac{j-1}{N}\bigr)-\phi_\varepsilon\bigl(\sfrac{j}{N}\bigr)\Bigr]^2 \biggr\}\,.\nn
\end{align}
Because of the bound on $\phi_\varepsilon'(t)$, we have that
\be\label{eq-boundphiprimeN}
\Bigl| \phi_\varepsilon\bigl(\sfrac{j+1}{N}\bigr)-\phi_\varepsilon\bigl(\sfrac{j}{N}\bigr) \Bigr| \leq \frac{1+\sqrt{\varepsilon}}{N}.
\ee
Thus, also using Lemma~\ref{lem-asymp-wN},
\begin{align}
D_N(F_x, A_N^{x,y})&\leq  \frac{d_N^2(1+o(1))}{N^2 Z_{N,S}} (1+\sqrt{\varepsilon})^2 m_\star(x)^{\varepsilon N} m_\star(y)^{\varepsilon N} \sum_{j=\varepsilon N}^{(1-\varepsilon)N} \biggl\{r(x,y) + \sum_{z \in S\setminus\{x,y\}} \frac{ d_N}{N-j} r(x,z) \biggr\} \nn\\
&= \frac{d_N(1+o(1))}{ \kappa_\star} (1+\sqrt{\varepsilon})^2 (1-2\varepsilon) m_\star(x)^{\varepsilon N} m_\star(y)^{\varepsilon N} r(x,y),
\end{align}
where  in the second equality we used Proposition~\ref{prop-limZN}.
Hence,
\be
\col{\limsup_{N\to\infty}} \frac{1}{d_N} D_N(F_x, A_N^{x,y}) \col{\leq} \frac{r(x,y)}{\kappa_\star} (1+\sqrt{\varepsilon})^2(1-2\varepsilon) \1 \{x,y\in S_\star\},
\ee
and the lemma follows by taking the limit $\varepsilon\to0$.
\end{proof}
The contribution to the Dirichlet form
coming from configurations inside a tube between sites $x,y \in S_1$
is negligible, as the next lemma shows.
\begin{lemma}\label{lem-insidetubesS1}
Let $x,y\in S^1$. Then, for $d_N \log N \to 0$ as $N\to\infty$,
\be
\lim_{N\to\infty}\frac{1}{d_N} D_N(F_{S^1}, A_N^{x,y}) =0.
\ee
\end{lemma}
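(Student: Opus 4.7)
The plan is to observe that $F_{S^1}$ is essentially constant on $A_N^{x,y}$ when $x,y\in S^1$, so only transitions that leave $A_N^{x,y}$ can contribute to the Dirichlet form, and these transitions carry a factor $d_N$ in their rate (rather than a factor of $N$).

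First I would exploit the symmetry $\phi_\varepsilon(t)+\phi_\varepsilon(1-t)=1$. Any $\eta\in A_N^{x,y}$ has $\eta_z=0$ for $z\notin\{x,y\}$ and $\eta_x+\eta_y=N$, so $F_{S^1}(\eta)=\phi_\varepsilon(\eta_x/N)+\phi_\varepsilon(\eta_y/N)=1$. In particular, transitions with $z,w\in\{x,y\}$ contribute zero to $D_N(F_{S^1},A_N^{x,y})$. For $z\in\{x,y\}$ and $w\notin\{x,y\}$, the configuration $\eta^{z,w}$ has at most one particle at $w$ (so $\phi_\varepsilon(1/N)=0$ for $N$ large enough) and all other sites outside $\{x,y\}$ remain empty, so $F_{S^1}(\eta^{z,w})-F_{S^1}(\eta)=\phi_\varepsilon((\eta_z-1)/N)-\phi_\varepsilon(\eta_z/N)$. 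By the bound $\phi_\varepsilon'\le 1+\sqrt\varepsilon$, this difference has modulus at most $(1+\sqrt\varepsilon)/N$, and it vanishes unless $\eta_z\in[\varepsilon N,(1-\varepsilon)N]$. Finally, the rate for such a transition simplifies to $\eta_z\, d_N\, r(z,w)$ because $\eta_w=0$.

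Next I would bound $D_N(F_{S^1},A_N^{x,y})$ by parameterizing $\eta\in A_N^{x,y}$ by $j=\eta_x$ with $j\in[\varepsilon N,(1-\varepsilon)N]$, and noting that $m_\star(x)=m_\star(y)=1$, so $\mu_N(\eta)=w_N(j)w_N(N-j)/Z_{N,S}$. Applying the asymptotics of Lemma \ref{lem-asymp-wN} (in the form $w_N(j)w_N(N-j)\,j(N-j)=d_N^2(1+o(1))$) together with Proposition \ref{prop-limZN} ($Z_{N,S}=\kappa_\star d_N/N\,(1+o(1))$), I get
\be
\frac{w_N(j)w_N(N-j)}{Z_{N,S}}\;\eta_z\,d_N=\frac{d_N^2\,N}{\kappa_\star (N-\eta_z)}(1+o(1)),
\ee
for $z\in\{x,y\}$. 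Combining these ingredients,
\be
D_N(F_{S^1},A_N^{x,y})\le \frac{(1+\sqrt\varepsilon)^2 d_N^2}{\kappa_\star N^2}(1+o(1))\sum_{z\in\{x,y\}}\sum_{w\notin\{x,y\}}r(z,w)\sum_{j=\varepsilon N}^{(1-\varepsilon)N}\frac{1}{N-\eta_z(j)}.
\ee
The inner sum over $j$ is $O(\log(1/\varepsilon))=O(1)$ for fixed $\varepsilon$, so $D_N(F_{S^1},A_N^{x,y})=O(d_N^2/N)$, and hence $D_N(F_{S^1},A_N^{x,y})/d_N=O(d_N/N)\to 0$ as $N\to\infty$, which yields the lemma.

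The main obstacle is really just bookkeeping: correctly identifying which transitions out of $A_N^{x,y}$ contribute and verifying that $F_{S^1}$ stays constant under the horizontal moves inside the tube. Once those cancellations from the symmetry $\phi_\varepsilon(t)+\phi_\varepsilon(1-t)=1$ are used, the remaining bound is softer than the one in Lemma \ref{lem-insidetubesS1S2} because the target site $w$ contributes $d_N$ rather than $N-\eta_z$ to the rate, and this extra factor of $d_N/N$ is what makes the whole contribution negligible.
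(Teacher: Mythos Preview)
Your proof is correct and follows essentially the same approach as the paper: exploit the symmetry $\phi_\varepsilon(t)+\phi_\varepsilon(1-t)=1$ to kill the $x\leftrightarrow y$ transitions inside the tube, then observe that the remaining transitions (to sites $w\notin\{x,y\}$) carry an extra factor $d_N$ in the rate since $\eta_w=0$. One small caveat: $x,y\in S^1$ does not in general imply $m_\star(x)=m_\star(y)=1$ (since $S^1$ is only required to contain $S_\star^1$ and may include non-maximal sites), but $m_\star\le 1$ gives the same upper bound, so the argument goes through unchanged.
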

\begin{proof}
Again, note that if $\eta \in A_N^{x,y}$, then for $v\in S^1\setminus\{x,y\}$
we have that $F_v(\eta)=F_v(\eta^{z,w})=0$, since $\eta_v,\eta_v^{z,w}\leq 1<\varepsilon N$.
Thus,
\be
D_N(F_{S^1}, A_N^{x,y}) =  D_N(F_x+F_y, A_N^{x,y}).
\ee
If a particle moves from $x$ to $y$, or viceversa, the total number of particles on sites $x$ and $y$ stays equal to $N$ and hence
\be
F_x(\eta) + F_y(\eta) =F_x(\eta^{x,y}) + F_y(\eta^{x,y})=F_x(\eta^{y,x}) + F_y(\eta^{y,x}) =  1,
\ee
since  by definition, $\phi_\varepsilon(x)+\phi_\varepsilon(1-x)=1$ for all $x\in[0,1]$.
We can  use again that $F_v$ does not change if the number of particles on $v$ stays the same,
 and restrict the sum to configurations with $\varepsilon N \leq \eta_x \leq (1-\varepsilon)N$. Thus
\begin{align}
D_N(F_{S^1}, A_N^{x,y}) &= \frac12 \sum_{\eta\in A_N^{x,y}}\mu_N(\eta) \sum_{z \in S\setminus\{x,y\}} \biggl\{ \eta_x d_N r(x,z) \Bigl[F_x(\eta^{x,z})-F_x(\eta)\Bigr]^2 \nn\\
&\qquad\ \qquad \ \qquad \ \qquad\ \qquad\ \qquad\ \qquad+  \eta_y d_N r(y,z) \Bigl[F_y(\eta^{y,z})-F_y(\eta)\Bigr]^2\biggr\}\nn\\
& = \frac{d_N}{2 Z_{N,S}} \sum_{j=\varepsilon N}^{(1-\varepsilon)N} m_\star(x)^{j} m_\star(y)^{N-j} w_N(j) w_N(N-j)\biggl\{ j r(x,z) \Bigl[\phi_\varepsilon\bigl(\frac{j-1}{n}\bigr)-\phi_\varepsilon\bigl(\frac{j}{n}\bigr)\Bigr]^2 \nn\\
&\qquad\ \qquad\ \qquad\ \qquad+  (N-j)  r(y,z) \Bigl[\phi_\varepsilon\bigl(\frac{N-j-1}{N}\bigr)-\phi_\varepsilon\bigl(\frac{N-j}{N}\bigr)\Bigr]^2\biggr\}.
\end{align}
Using Lemma~\ref{lem-asymp-wN},~\eqref{eq-boundphiprimeN} and $m_\star(x),m_\star(y)\leq 1$, we can bound
\begin{align}
D_N(F_{S^1}, A_N^{x,y})& \leq  \frac{d_N^3 (1+o(1))}{2 N^2 Z_{N,S}} (1+\sqrt{\varepsilon})^2 \sum_{j=\varepsilon N}^{(1-\varepsilon)N}  \biggl\{ \frac{ r(x,z) }{N-j}+ \frac{ r(y,z)}{j} \biggr\} \nn\\
& =  \frac{d_N (1+o(1))}{2 \kappa_\star} (1+\sqrt{\varepsilon})^2 (1-2\varepsilon) o(1).
\end{align}
Then finally,
\be
\lim_{N\to\infty}\frac{1}{d_N} D_N(F_{S^1}, A_N^{x,y}) = \lim_{N\to\infty} d_N o(1) = 0.
\ee
\end{proof}

\paragraph*{Dirichlet form outside tubes}
We finally show in the next lemma, that the configurations outside the collections of tubes $A_N^z$
 gives a negligible contribution to the Dirichlet form.
\begin{lemma}\label{lem-outsidetubes}
For $d_N \log N \to 0$ as $N\to\infty$,
\be
\lim_{N\to\infty}\frac{1}{d_N} D_N(F_{S^1}, E_N \setminus\bigcup_{x\in S^1} A_N^x) =0.
\ee
\end{lemma}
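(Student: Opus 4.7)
My plan is to decompose the set $E_N\setminus\bigcup_{x\in S^1} A_N^x$ according to the number of occupied sites of $\eta$, argue that configurations with at most two occupied sites give no contribution to the restricted Dirichlet form, and bound the remaining contribution from configurations with at least three occupied sites via the weight estimates of Lemma~\ref{lem-asymp-wN} together with Proposition~\ref{prop-limZN}.

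First I would check that one-site and most two-site configurations already belong to $\bigcup_{x\in S^1}A_N^x$. If $\eta$ has a single occupied site $y$, then $\eta_x+\eta_y=N$ for every $x\in S^1$, so $\eta\in A_N^{x,y}$; a two-site configuration with $\eta_y+\eta_z=N$ and $\{y,z\}\cap S^1\neq\emptyset$ belongs to $\bigcup_{x\in S^1}A_N^x$ as well. The only two-site configurations that survive in $E_N\setminus\bigcup_{x\in S^1}A_N^x$ are those with $y,z\in S\setminus S^1$, and for those $\eta_x\leq 1$ for every $x\in S^1$. Since a single-particle jump changes each $\eta_x$ by at most one, this bound is preserved under $\eta\mapsto\eta^{z,w}$, and the flatness of $\phi_\varepsilon$ on $[0,\varepsilon]$ then forces $F_{S^1}(\eta^{z,w})-F_{S^1}(\eta)=0$ for every $(z,w)$. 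Hence only configurations with at least three occupied sites can contribute.

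Next, I would use the derivative bound $|\phi_\varepsilon'|\leq 1+\sqrt{\varepsilon}$ to get $|F_{S^1}(\eta^{z,w})-F_{S^1}(\eta)|\leq 2(1+\sqrt{\varepsilon})/N$, and combine it with $\sum_z\eta_z=N$ and $\sum_w r(z,w)(d_N+\eta_w)\leq CN$ to obtain a uniform per-configuration bound $\sum_{z,w}\eta_z(d_N+\eta_w)r(z,w)[F_{S^1}(\eta^{z,w})-F_{S^1}(\eta)]^2\leq C_0$. Moreover, this summand vanishes whenever $\eta_x\notin(\varepsilon N-1,(1-\varepsilon)N+1)$ for every $x\in S^1$, because both $\eta_x/N$ and $(\eta^{z,w})_x/N$ then lie in the same flat region of $\phi_\varepsilon$. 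Let $\widetilde{\mathcal B}_N$ denote the set of configurations with at least three occupied sites and with $\eta_x\in(\varepsilon N,(1-\varepsilon)N)$ for some $x\in S^1$; it follows that $D_N(F_{S^1},E_N\setminus\bigcup_{x\in S^1}A_N^x)\leq C_0\,\mu_N(\widetilde{\mathcal B}_N)$. To bound the latter I would use Lemma~\ref{lem-asymp-wN} in the uniform form $w_N(k)\leq Cd_N/k$ for $1\leq k\leq N$ together with Proposition~\ref{prop-limZN}, so that a configuration with $k$ occupied sites of occupancies $(m_1,\ldots,m_k)$ satisfies $\mu_N(\eta)\leq C_\kappa d_N^{k-1}N/\prod m_i$. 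For the dominant three-site case the classical estimate $\sum_{n_1+n_2=m,\,n_i\geq 1}(n_1 n_2)^{-1}=O(\log m/m)$, combined with $1/\eta_{x_1}\leq 1/(\varepsilon N)$ and an elementary sum over $\eta_{x_1}\in(\varepsilon N,(1-\varepsilon)N)$, gives a three-site contribution of $O(d_N^2\log N)$. An iterated version of this device bounds the $k$-site contribution by $O(d_N^{k-1}(\log N)^{k-2})$, and summing over $3\leq k\leq\kappa$ yields a geometric series in $d_N\log N$ of total mass $O(d_N^2\log N)$. Dividing by $d_N$ gives $D_N(F_{S^1},E_N\setminus\bigcup_{x\in S^1}A_N^x)/d_N=O(d_N\log N)\to 0$ by hypothesis.

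\textbf{Main obstacle.} The delicate point is the logarithmic factor in the combinatorial sum. A direct bound on $\mu_N$ of three-site configurations that ignores the interior restriction $\eta_x\in(\varepsilon N,(1-\varepsilon)N)$ yields only $O(d_N^2(\log N)^2)$, which is not $o(d_N)$, since $d_N\log N\to 0$ does not imply $d_N(\log N)^2\to 0$. Exploiting that $F_{S^1}$ is flat on $[0,\varepsilon]\cup[1-\varepsilon,1]$ to restrict the relevant $\eta_x$ to the interior is precisely what shaves off one logarithm and makes the estimate compatible with the hypothesis.
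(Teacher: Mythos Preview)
Your proof is correct and takes a genuinely different route from the paper's.

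The paper first applies Cauchy--Schwarz,
\[
[F_{S^1}(\eta^{z,w})-F_{S^1}(\eta)]^2\le |S^1|\sum_{x\in S^1}[F_x(\eta^{z,w})-F_x(\eta)]^2,
\]
thereby reducing to bounding $D_N(F_x,E_N\setminus A_N^x)$ for a single coordinate $x$. After restricting to $\varepsilon N\le \eta_x\le(1-\varepsilon)N$ and to jumps touching $x$, the inner sum over the remaining coordinates is recognised as $Z_{N-j,S\setminus\{x\}}-\sum_{y\neq x}m_\star(y)^{N-j}w_N(N-j)$, and the crucial estimate comes from the induction bound~\eqref{eq-inductionboundZnk} proved inside Proposition~\ref{prop-limZN}, which kills this difference to $\frac{d_N}{N-j}\,o(1)$. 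Your argument instead stratifies by the number of occupied sites, disposes of one- and two-site configurations by observing they either lie in a tube or land in the flat part of $\phi_\varepsilon$, and then bounds the $\mu_N$-mass of three-or-more-site configurations with an interior $\eta_x$ directly from $w_N(k)\le Cd_N/k$ and the combinatorial identity $\sum_{n_1+n_2=m}(n_1n_2)^{-1}=O(\log m/m)$ (iterated for more sites).

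The trade-off: the paper's route is shorter because it recycles~\eqref{eq-inductionboundZnk} wholesale rather than redoing the site-by-site combinatorics, and the Cauchy--Schwarz step avoids tracking how many coordinates of $S^1$ are simultaneously in the interior. Your approach is more self-contained and makes the mechanism transparent---one sees exactly that the extra $d_N$ comes from the third occupied site and the single $\log N$ from the reciprocal convolution---and your ``main obstacle'' remark correctly isolates why the interior restriction on $\eta_x$ is essential (the paper uses the same restriction but does not comment on its necessity). Both arguments yield the same rate $O(d_N\log N)$ for $D_N/d_N$.
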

\begin{proof}
As in~\cite{BelLan12}, by the Cauchy-Schwarz inequality we get
\be
[F_{S^1}(\eta^{z,w})-F_{S^1}(\eta)]^2 = \biggl[\sum_{x \in S^1} [F_x(\eta^{z,w})-F_x(\eta)]\biggr]^2 \leq |S^1| \sum_{x \in S^1} [F_x(\eta^{z,w})-F_x(\eta)]^2,
\ee
and then
\be\label{eq-CZoutsidetube}
D_N(F_{S^1}, E_N \setminus\bigcup_{z\in S^1} A_N^z)  \leq |S^1| \sum_{x\in S^1}D_N(F_x, E_N \setminus\bigcup_{z\in S^1} A_N^z) \leq  |S^1| \sum_{x\in S^1}D_N(F_x, E_N \setminus A_N^x).
\ee
Again, we can restrict the sum to configurations with $\varepsilon N\leq \eta_x \leq (1-\varepsilon)N$.
Furthermore, if $\eta\in E_N \setminus A_N^x$ and $\eta_x=j$, all sites besides $x$ have at most $N-j-1$ particles, and thus
\be
D_N(F_x, E_N \setminus A_N^x) = \frac12 \sum_{j=\varepsilon N}^{(1-\varepsilon)N} \sum_{\substack{\eta: \eta_x=j \\
 \forall y\neq x:\,\eta_y \leq N-j-1}} \!\!\!\mu_N(\eta) \sum_{z,w\in S} \eta_z (d_N+\eta_w)r(z,w) [F_x(\eta^{z,w})-F_x(\eta)]^2.
\ee
Note that if $z,w\neq x$, then $F_x(\eta^{z,w})=F_x(\eta)$, since $F_x$ only depends on the number of particles on~$x$. Hence,
\begin{align}
D_N&(F_x, E_N \setminus A_N^x) =  \frac12 \sum_{j=\varepsilon N}^{(1-\varepsilon)N}
 \sum_{\substack{\eta\in E_N: \eta_x=j \\  \forall y\neq x:\,\eta_y \leq N-j-1}} \mu_N(\eta) \,\nn\\
& \quad\quad  \sum_{y\in S\setminus \{x\}}
\biggl\{ \eta_x (d_N+\eta_y)r(x,y)
[F_x(\eta^{x,y})-F_x(\eta)]^2+ \eta_y (d_N+\eta_x)r(y,x) [F_x(\eta^{y,x})-F_x(\eta)]^2\biggr\}\nn\\
&=  \frac1{2Z_{N,S}} \sum_{j=\varepsilon N}^{(1-\varepsilon)N} m_\star(x)^j w_N(j)
 \sum_{\substack{\eta\in E_N: \eta_x=j \\  \forall y\neq x:\,\eta_y \leq N-j-1}}
 \prod_{y\in S\setminus \{x\}} \Bigl(m_\star(y)^{\eta_y} w_N(\eta_y)\Bigr)\,\\
&\quad \sum_{y\in S\setminus\{x\}} \biggl\{ j (d_N+\eta_y)r(x,y)
\Bigl[\phi_\varepsilon\bigl(\sfrac{j-1}{N}\bigr)-\phi_\varepsilon\bigl(\sfrac{j}{N}\bigr)\Bigr]^2+
\eta_y (d_N+j)r(y,x) \Bigl[\phi_\varepsilon\bigl(\sfrac{j+1}{N}\bigr)-\phi_\varepsilon\bigl(\sfrac{j}{N}\bigr)\Bigr]^2\biggr\}. \nn
\end{align}
Since $|S|<\infty$, we can bound $r(x,y),r(y,x) \leq \max_{z,w\in S} r(z,w) =: R$
and $m_\star(x)\leq 1$, and also bound $\max\{j(d_N+N-j),(N-j)(d_N+j) \}\leq j (N-j) (1+o(1))$.
Combining this with~\eqref{eq-boundphiprimeN}, we get
\begin{align}
D_N&(F_x, E_N \setminus A_N^x)  \\
&\leq R (\kappa-1)\frac{(1+\sqrt{\varepsilon})^2}{N^2Z_{N,S}}
\sum_{j=\varepsilon N}^{(1-\varepsilon)N}  w_N(j) j (N-j)(1+o(1))
\sum_{\substack{\eta\in E_N: \eta_x=j \\ \forall y\neq x:\,\eta_y \leq N-j-1}}
\prod_{y\in S\setminus \{x\}} \Bigl(m_\star(y)^{\eta_y} w_N(\eta_y)\Bigr)\col{.} \nn
\end{align}
Notice that  the last sum can be written as
\be
Z_{N-j,S\setminus\{x\}} - \sum_{y\in S\setminus \{x\}} m_\star(y)^{N-j} w_N(N-j)\leq \frac{d_N}{N-j}o(1),
\ee
where the inequality follows from~\eqref{eq-inductionboundZnk}. Hence, also using Lemma~\ref{lem-asymp-wN} and Proposition~\ref{prop-limZN},
\begin{align}
D_N(F_x, E_N \setminus A_N^x)& \leq R (\kappa-1)\frac{(1+\sqrt{\varepsilon})^2}{N^2 Z_{N,S}} (1-2\varepsilon)N d_N^2 (1+o(1)) o(1) \nn\\
&= R \frac{(\kappa-1)}{\kappa_\star}(1+\sqrt{\varepsilon})^2 (1-2\varepsilon) d_N (1+o(1)) o(1),
\end{align}
from which it follows that
\be
\frac{1}{d_N} D_N(F_x, E_N \setminus A_N^x) = o(1),
\ee
and that together with~\eqref{eq-CZoutsidetube} proves the lemma.
\end{proof}

Combining these lemmas, we can now prove Proposition~\ref{prop-capacities-between-metastable-sets-ub}.
\begin{proof}[Proof of Proposition~\ref{prop-capacities-between-metastable-sets-ub}]
Let $S^1 \subsetneq S$ be such that $S_\star^1 \subseteq S^1$ and $S_\star^2 \subseteq S\setminus S^1 =:S^2$.
Note that if $\eta\in \mathcal{E}_N(S_\star^2)$ then $F_{S_1}(\eta)=\col{0}$, and if $\eta\in \mathcal{E}_N(S_\star^1)$ then $F_{S_1}(\eta)=\col{1}$.
Hence, $F_{S_1} \in \mathcal{F}_N(\mathcal{E}(S_\star^1),\mathcal{E}(S_\star^2))$.
Therefore, by~\eqref{eq-capasinf},
\be
\Capac_N(\mathcal{E}(S_\star^1),\mathcal{E}(S_\star^2)) \leq D_N(F_{S^1}).
\ee
We can split the right hand side according to~\eqref{eq-splitD1} and~\eqref{eq-splitD2}, and the proposition then
follows from Lemmas~\ref{lem-insidetubesS1S2},~\ref{lem-insidetubesS1} and~\ref{lem-outsidetubes}.
\end{proof}

\subsection{Proof of Theorem \ref{thm-first-timescale}}\label{sec-Proof-thm1}
\begin{proof}[Proof of Theorem \ref{thm-first-timescale}(i)]
As a consequence of Propositions \ref{prop-capacities-between-metastable-sets-lb} and \ref{prop-capacities-between-metastable-sets-ub},
we have that for nonempty subsets $S_\star^1 \subsetneq S_\star$ and $S_\star^2 = S_\star \setminus S_\star^1$,
and  $d_N \log N \to 0$ as $N\to\infty$,
\be\label{eq-capa1}
\lim_{N\rightarrow\infty} \frac{1}{d_N}
\Capac_N \left(\mathcal{E}_N(S_\star^1),\mathcal{E}_N(S_\star^2)\right)=\frac{1}{\kappa_\star} \sum_{x\in S_\star^1}\sum_{y \in S_\star^2}r(x,y).
\ee
In view of \eqref{eq-potential} and \eqref{eq-hitting-time}, in order to prove the statement (i) we need
to provide an asymptotic formula for the $\mu_N$-average of the equilibrium potential
$\P_\n\left( \t_{\cE_N^x}< \t_{\cE_N(S_\star \setminus \{x\})}\right)$.
Since this is trivially equal to $1$ for $\n\in\cE_N^x$, and equal to $0$ for $\n\in \cE_N(S_\star \setminus \{x\})$,
we have on one hand
\be
\sum_{\n\in E_N} \mu_N(\n)\cdot \P_\n\left( \t_{\cE_N^x}< \t_{\cE_N(S_\star \setminus \{x\})}\right)
\geq \mu_N (\cE_N^x)\,,
\ee
and on the other hand
\be
\sum_{\n\in E_N} \mu_N(\n)\cdot \P_\n\left( \t_{\cE_N^x}< \t_{\cE_N(S_\star \setminus x)}\right)
\leq \sum_{\n\in E_N \atop \n \notin \cE_N(S_\star \setminus \{x\})} \mu_N (\n)= \mu_N (\cE_N^x) + \mu_N(\Delta)\,.
\ee
From these bounds and Proposition \ref{prop-met-sets}, it follows
\be\label{eq-potential-final}
\sum_{\n\in E_N} \mu_N(\n)\cdot \P_\n\left( \t_{\cE_N^x}< \t_{\cE_N(S_\star \setminus \{x\})}\right)
= \frac{1}{\kappa_\star}(1+o(1))\,,
\ee
that together with \eqref{eq-capa1} concludes the proof of \eqref{eq-mean-time}.
\end{proof}
\begin{proof}[Proof of Theorem \ref{thm-first-timescale}(ii)]
We stress once more that in our setting, where metastable sets are just singletons,
the convergence of the speeded-up  process follows from Theorem 2.7 of \cite{BelLan10}
once the condition \eqref{H0}  of Section \ref{subsec-outline} (called condition {\bf(H0)} in \cite{BelLan10}) is verified
for the sequence $\theta_N= 1/d_N$, $N\geq 1$.

By Lemma 6.8 of \cite{BelLan10}, that we have recalled in \eqref{eq-rate-capacity}, and using
Proposition \ref{prop-met-sets} and \eqref{eq-capa1}, we get that for any $x,y \in S_\star$, $x\neq y$,
\be
\lim_{N\to\infty} \frac{1}{d_N} r_N^{\cE_\star}\left(\cE_N^x,\cE_N^y\right)
=  r(x,y)\,.
\ee
To prove \eqref{eq-0time}  observe that by the stationarity of $\mu_N$ we have
\begin{equation}
\begin{split}
\mathbb E_{\mathcal{E}_N^x}\left[\int_0^T \1 \{\eta(s/ d_N)\in \Delta\} \,\dint s\right]&
\leq \frac{1}{\mu_N(\mathcal{E}_N^x)} \sum_{\eta \in E_N} \mu_N(\eta) \mathbb{E}_{\eta}\left[\int_0^T \1 \{\eta(s / d_N)\in \Delta\} \,\dint s\right]\\
& = T \cdot \frac{\mu_N(\Delta)}{\mu_N(\mathcal{E}_N^x)}\,.
\end{split}
\end{equation}
Then \eqref{eq-0time} follows from Proposition \ref{prop-met-sets}.
This concludes the proof of theorem.

\end{proof}


\section{Dynamics of the condensate on the second time-scale}\label{sec-cap-second-timescale}
This \col{s}ection is organized similarly to the previous one. We first provide a lower bound on
capacities, then a matching upper bound, and finally we give the proof of Theorem \ref{thm-second-timescale}.
\subsection{Lower bound on capacities}
\begin{proposition}\label{prop-capa2-lb}
Let the underlying random walk  be as in (\ref{linear-dynamics}), with $\kappa=3$.
Then, for $d_N\log_N\to0$ as $N\to\infty$,
\be
\col{\liminf_{N\to\infty}} \frac{N}{d_N^2} \Capac_N \left(\mathcal{E}_N(1),\mathcal{E}_N(3)\right) \geq \left(\frac{1}{r(1,2)}+\frac{1}{r(3,2)}\right)^{-1}\frac{1}{1-m_\star(2)}.
\ee
\end{proposition}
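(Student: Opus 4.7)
The plan is to prove the claimed lower bound via Thomson's principle: exhibit a unit flow $\phi$ from $(N,0,0)$ to $(0,0,N)$ in the configuration network (with conductances $c(\eta,\eta') = \mu_N(\eta)\cdot\mathrm{rate}(\eta\to\eta')$) and invoke $\Capac_N \geq 1/D(\phi)$, where $D(\phi) = \sum_e \phi(e)^2/c(e)$. The physical picture driving the construction is that, since $S_\star = \{1,3\}$ and $2\notin S_\star$, moving the condensate from $1$ to $3$ requires routing all $N$ particles through site $2$; the factor $\bigl(\tfrac{1}{r(1,2)} + \tfrac{1}{r(3,2)}\bigr)^{-1}$ will arise from the series combination of the $1\to 2$ and $2\to 3$ bottlenecks, each invoked $\sim N$ times, while $(1-m_\star(2))^{-1}$ will emerge as a geometric sum over the transient number of particles temporarily held at site $2$.

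Concretely, for each $k\geq 1$ (with $k\leq K_N\to\infty$ chosen so that $P_k$ is well defined) I would construct a path $P_k$ from $(N,0,0)$ to $(0,0,N)$ that (i) ramps up via $k$ successive $1\to 2$ transitions to reach $(N-k,k,0)$, (ii) zigzags through the bulk between $\eta_2\in\{k-1,k\}$, transferring one particle from site $1$ to site $3$ per complete zigzag, and (iii) ramps down via $k$ successive $2\to 3$ transitions. Using Lemma~\ref{lem-asymp-wN} for the weights $w_N(\cdot)$, the reversibility identity $r(3,2) = m_\star(2)\,r(2,3)$, and Lemma~\ref{lem-eff-resistance} applied to the alternating bulk, the bulk effective resistance of $P_k$ will be
\[
R_k^{\mathrm{bulk}}\;\sim\;\frac{N^2\,Z_{N,S}}{2\,d_N^3\,m_\star(2)^{k-1}}\,\Bigl(\tfrac{1}{r(1,2)} + \tfrac{1}{r(3,2)}\Bigr).
\]
I would then take the unit flow $\phi = \sum_{k} p_k\,{\bf 1}_{P_k}$ with weights $p_k = (1-m_\star(2))\,m_\star(2)^{k-1}$, normalized to one up to a harmless truncation error. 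Because the bulk portions of distinct $P_k$ are edge-disjoint (they occupy different $\eta_2$-slabs), they contribute additively to $D(\phi)$; summing and invoking Proposition~\ref{prop-limZN} ($Z_{N,S}\sim 2d_N/N$), the geometric series $\sum_k m_\star(2)^{k-1} = 1/(1-m_\star(2))$ will produce the asymptotics $D(\phi)\sim \tfrac{N(1-m_\star(2))}{d_N^2}\bigl(\tfrac{1}{r(1,2)} + \tfrac{1}{r(3,2)}\bigr)$, from which the claimed lower bound follows via Thomson's principle.

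The main technical obstacle will be controlling the \emph{shared} ramp edges. Paths $P_k, P_{k+1},\dots$ all traverse the $j$-th upward ramp edge for $j\leq k-1$, so the flow through such an edge accumulates to $\sum_{k'\geq j+1}p_{k'} = m_\star(2)^j$; I will need to verify that the resulting ramp contribution to $D(\phi)$, after summing the $j$-dependent terms geometrically, is of order $Z_{N,S}/d_N^2 \sim 1/(Nd_N)$, and hence negligible against the bulk contribution of order $N/d_N^2$ (an analogous estimate handles the downward ramps). A minor additional point is the truncation at $k\leq K_N$, forced by the requirement $k\leq N/2$ for $P_k$ to fit inside the configuration simplex; the geometric decay $m_\star(2)^k$ ensures that any choice $K_N\to\infty$ (e.g.\ $K_N = \lfloor\log N\rfloor$) produces only a normalization error of order $m_\star(2)^{K_N}$, which is absorbed into the $(1+o(1))$ prefactor.
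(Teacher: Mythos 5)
Your proposal is correct and takes a genuinely different route from the paper's proof. The paper proves the lower bound by starting from an arbitrary test function $F\in\mathcal{F}_N(\mathcal{E}_N(1),\mathcal{E}_N(3))$, restricting the Dirichlet sum to configurations with $\eta_2\le L$ (then sending $L\to\infty$ at the end), and repeatedly applying Lemma~\ref{lem-eff-resistance} to collapse the surviving terms into a series resistance; a small technical device ($\tilde w_N$) is introduced to split off the boundary terms at $\eta_2=0$. You instead use the dual variational principle (Thomson's principle) and build an explicit unit flow as a geometric superposition of paths $P_k$, each holding exactly $k$ particles on site~$2$ during the transfer. Both are standard ways to lower-bound a capacity, and your bulk computation reproduces the paper's answer: the bulk resistance $R_k^{\rm bulk}$ you write down is precisely what the paper obtains after its two applications of Lemma~\ref{lem-eff-resistance} at fixed $\ell=k-1$, and summing $p_k^2 R_k^{\rm bulk}$ with $p_k=(1-m_\star(2))m_\star(2)^{k-1}$ over $k$ reproduces the geometric factor $1/(1-m_\star(2))$ that in the paper comes from $\sum_{\ell=0}^{L} m_\star(2)^\ell$ with $L\to\infty$. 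The places where your argument has to work a little where the paper's does not are exactly the ones you flagged: the shared ramp edges, whose accumulated flow $m_\star(2)^j$ and energy of order $Z_{N,S}/d_N^2\sim1/(Nd_N)$ are indeed negligible against the bulk $\sim N/d_N^2$; and the truncation $k\le K_N$ (needed to stay inside the simplex), which only costs a $1-m_\star(2)^{K_N}=1-o(1)$ renormalization. Conversely, the paper has to argue that the terms it discards (configurations with $\eta_2>L$) can only help, which is automatic since it is a sum of nonnegative terms, but it pays for the flexibility of an arbitrary $F$ with the more intricate chain of Lemma~\ref{lem-eff-resistance} reductions. Your flow construction makes the mechanism -- $k$ particles on site~$2$ occurring with a geometric penalty, each level contributing a series of $1\to2$ and $2\to3$ bottlenecks -- perhaps more transparent, at the expense of having to check Kirchhoff consistency and edge-disjointness of the bulk slabs, which you correctly note and which does hold.
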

\begin{proof}
Fix a function $F\in\mathcal{F}(\mathcal{E}_N(1),\mathcal{E}_N(3))$. Using reversibility, we can write the Dirichlet form of $F$ as
\begin{align}
D_N(F)&=\!\! \sum_{\eta\in E_N}\!\mu_N(\eta) \biggl( \eta_1(d_N+\eta_2)r(1,2) \left[F(\eta^{1,2})-F(\eta)\right]^2\! + \eta_2(d_N+\eta_3)r(2,3) \left[F(\eta^{2,3})-F(\eta)\right]^2\biggr) \nn\\
&= \sum_{\xi\in E_{N-1}}  \biggl( \mu_N(\xi+\partial_1)(\xi_1+1)(d_N+\xi_2)r(1,2) \left[F(\xi+\partial_2)-F(\xi+\partial_1)\right]^2 \nn\\
&\qquad \ \qquad+ \mu_N(\xi+\partial_2) (\eta_2+1) (d_N+\xi_3)r(2,3) \left[F(\xi+\partial_3)-F(\xi+\partial_2)\right]^2\biggr),
\end{align}
where $\xi+\partial_z$ denotes a configuration $\xi$ with $N-1$ particles, and with one extra particle on $z$.

For some fixed $L$ and $N$ big enough, we can restrict the Dirichet form of $F$
by only considering configurations $\xi\in E_N$ such that $\xi_1=j,\, \xi_2=\ell$ and $\xi_3=N-j-\ell-1$,
with $\ell\leq L$. On this set of configurations, we then define the function $G(j,\ell,z):=F(\xi+\partial_z) $ and write
\begin{align}\label{banana}
D_N(F) &\geq \frac{1}{Z_{N,S}}  \sum_{\ell=0}^L \sum_{j=0}^{N-\ell-1} \nn\\
&\,\, \biggl\{w_N(j+1)m_\star(2)^{\ell}w_N(\ell)w_N(N-j-\ell-1)(j+1)(d_N+\ell)r(1,2) [G(j,\ell,2)-G(j,\ell,1)]^2 \nn\\
&\qquad \ \qquad + w_N(j) m_\star(2)^{\ell+1}w_N(\ell+1) w_N(N-j-\ell-1) (\ell+1) (d_N+N-j-\ell-1) \nn\\
&\qquad \ \qquad \qquad\quad \cdot r(2,3) [G(j,\ell,3)-G(j,\ell,2)]^2\biggr\} \, \col{,}
\end{align}
\col{w}here we used that $m_\star(2) r(2,3)=r(3,2)$ by the reversibility of the underlying random walk.
From inequality~\eqref{eq-bounddnsq}, we can then bound \eqref{banana} from below by
\be
\begin{split}
\frac{d_N^2}{Z_{N,S}} &\sum_{\ell=0}^L  m_\star(2)^{\ell} \sum_{j=0}^{N-\ell-1} \biggl\{w_N(N-j-\ell-1)r(1,2) [G(j,\ell,2)-G(j,\ell,1)]^2
\\
&\quad + w_N(j) r(3,2) [G(j,\ell,3)-G(j,\ell,2)]^2\biggr\}\,.
\end{split}
\ee

Moreover, let us define
\be
\tilde{w}_N(j) = \left\{\begin{array}{ll}  d_N, & {\rm if\ } j=0, \\ w_N(j),& {\rm if\ } j>0, \end{array}\right.
\ee
so that  $w_N(0) = 1 = \tilde{w}_N(0) + (1-d_N)$ and hence
\begin{align}
D_N(F) &\geq \frac{d_N^2}{Z_{N,S}} \sum_{\ell=0}^L  m_\star(2)^{\ell} \sum_{j=0}^{N-\ell-1} \biggl\{\tilde{w}_N(N-j-\ell-1)r(1,2) [G(j,\ell,2)-G(j,\ell,1)]^2 \nn\\
&\qquad \ \qquad \ \qquad + \tilde{w}_N(j) r(3,2) [G(j,\ell,3)-G(j,\ell,2)]^2\biggr\} \nn\\
&\qquad+ (1-d_N) \frac{d_N^2}{Z_{N,S}} \sum_{\ell=0}^{L-1}  m_\star(2)^{\ell}\biggl( r(1,2) [G(N-\ell-1,\ell,2)-G(N-\ell-1,\ell,1)]^2 \nn\\
&\qquad \ \qquad \ \qquad + r(3,2) [G(0,\ell,3)-G(0,\ell,2)]^2\biggr)\,.
\end{align}
Using Lemma~\ref{lem-eff-resistance} with
\be
g(z) = \frac{G(j,\ell,z)-G(j,\ell,3)}{G(j,\ell,1)-G(j,\ell,3)},
\ee
we can bound
\begin{align}
\tilde{w}_N&(N-j-\ell-1)r(1,2) [G(j,\ell,2)-G(j,\ell,1)]^2+ \tilde{w}_N(j) r(3,2) [G(j,\ell,3)-G(j,\ell,2)]^2 \nn\\
&\geq [G(j,\ell,1)-G(j,\ell,3)]^2 \left( \frac{1}{\tilde{w}_N(N-j-\ell-1)r(1,2)}+\frac{1}{\tilde{w}_N(j) r(3,2)}\right)^{-1}.
\end{align}
Observing that $G(j,\ell,1)=G(j+1,\ell,3)$, and using Lemma~\ref{lem-eff-resistance} again, we bound
\begin{align}
\sum_{j=0}^{N-\ell-1} &[G(j,\ell,1)-G(j,\ell,3)]^2
\left( \frac{1}{\tilde{w}_N(N-j-\ell-1)r(1,2)}+\frac{1}{\tilde{w}_N(j) r(3,2)}\right)^{-1} \\
&\geq [G(N-\ell,\ell,3)-G(0,\ell,3)]^2 \left(\sum_{j=0}^{N-\ell-1}
 \left( \frac{1}{\tilde{w}_N(N-j-\ell-1)r(1,2)}+\frac{1}{\tilde{w}_N(j) r(3,2)}\right)\right)^{-1}.\nn
\end{align}
By reversing the summing order of the first term, the sum over $j$ equals
\begin{align}
\left( \frac{1}{r(1,2)}+\frac{1}{r(3,2)}\right)
\sum_{j=0}^{N-\ell-1}  \frac{1}{\tilde{w}_N(j)}
 &= \left( \frac{1}{r(1,2)}+\frac{1}{r(3,2)}\right)
 \frac{1}{d_N}\left(1+\sum_{j=1}^{N-\ell-1} j(1+o(1))\right) \nn\\
&= \left( \frac{1}{r(1,2)}+\frac{1}{r(3,2)}\right) \frac{N^2}{2d_N}(1+o(1)),
\end{align}
since $\ell=o(N)$. Hence,
\begin{align}
D_N(F) &\geq \frac{d_N^2}{Z_{N,S}}
\sum_{\ell=0}^L  m_\star(2)^{\ell}  [G(N-\ell,\ell,3)-G(0,\ell,3)]^2
\left(\left( \frac{1}{r(1,2)}+\frac{1}{r(3,2)}\right) \frac{N^2}{2d_N}\right)^{-1}(1+o(1)) \nn\\
&\qquad+(1-d_N) \frac{d_N^2}{Z_{N,S}}
\sum_{\ell=0}^{L-1}  m_\star(2)^{\ell}\biggl( r(1,2) [G(N-\ell-1,\ell,2)-G(N-\ell-1,\ell,1)]^2 \nn\\
&\qquad \ \qquad \ \qquad +  r(3,2) [G(0,\ell,3)-G(0,\ell,2)]^2\biggr) \nn\\
&= \frac{d_N^2}{Z_{N,S}}  \sum_{\ell=0}^L  \biggl\{ m_\star(2)^{\ell} \frac{2d_N}{N^2}
\left( \frac{1}{r(1,2)}+\frac{1}{r(3,2)}\right)^{-1}(1+o(1)) [G(N-\ell,\ell,3)-G(0,\ell,3)]^2  \nn\\
&\qquad+(1-d_N) \sum_{p=0}^{\ell-1}  \frac{m_\star(2)^{p}}{L-p}\biggl( r(1,2) [G(N-p-1,p+1,3)-G(N-p,p,3)]^2 \nn\\
&\qquad \ \qquad \ \qquad +  r(3,2) [G(0,p,3)-G(0,p+1,3)]^2\biggr) \biggr\}.
\end{align}
Using Lemma~\ref{lem-eff-resistance}, we can bound
\begin{align}
\sum_{p=0}^{\ell-1}  \frac{m_\star(2)^{p}}{L-p} &[G(N-p-1,p+1,3)-G(N-p,p,3)]^2 \nn\\
&\geq  [G(N-\ell,\ell,3)-G(N,0,3)]^2 \left(\sum_{p=0}^{\ell-1}  \frac{L-p}{m_\star(2)^{p}}\right)^{-1} \nn\\
&\geq  [G(N-\ell,\ell,3)-G(N,0,3)]^2  \frac{m_\star(2)^\ell}{L^2},
\end{align}
and
\be
\sum_{p=0}^{\ell-1}  \frac{m_\star(2)^{p}}{L-p} [G(0,p,3)-G(0,p+1,3)]^2 \geq [G(0,0,3)-G(0,\ell,3)]^2 \frac{m_\star(2)^\ell}{L^2}.
\ee
Thus,
\begin{align}
D_N(F) & \geq \frac{d_N^2}{Z_{N,S}}  \sum_{\ell=0}^L  m_\star(2)^{\ell}
\biggl\{  \frac{2d_N}{N^2} \left( \frac{1}{r(1,2)}+\frac{1}{r(3,2)}\right)^{-1}(1+o(1)) [G(N-\ell,\ell,3)-G(0,\ell,3)]^2 \nn \\
&\,\,+ r(1,2)  \frac{1-d_N}{L^2} [G(N-\ell,\ell,3)-G(N,0,3)]^2 +  r(3,2) \frac{1-d_N}{L^2} [G(0,0,3)-G(0,\ell,3)]^2\biggr) \biggr\} \col{,}
\end{align}
and since $G(0,0,3)=0$ and $G(N,0,3)=1$, we get
\begin{align}
D_N(F) & \geq  \frac{d_N^2}{Z_{N,S}}
 \sum_{\ell=0}^L  m_\star(2)^{\ell} \biggl\{  \frac{N^2}{2d_N}\left( \frac{1}{r(1,2)}
 +\frac{1}{r(3,2)}\right)(1+o(1)) +   \frac{r(1,2)L^2}{1-d_N}
 + \frac{r(3,2)L^2}{1-d_N} \bigg\}^{-1} \nn\\
&= \frac{d_N^2}{N} \frac{d_N}{N Z_{N,S}} \left( \frac{1}{r(1,2)}+\frac{1}{r(3,2)}\right)^{-1} (1+o(1))
\sum_{\ell=0}^L  m_\star(2)^{\ell}\,.\end{align}
By Proposition~\ref{prop-limZN}, $\lim_{N\to\infty} \frac{d_N}{N Z_{N,S}}  = \frac{1}{\kappa_\star}=\frac12$. Hence,
\be
\col{\liminf_{N\to\infty}}\frac{N}{d_N^2} D_N(F) \geq \left(\frac{1}{r(1,2)}+\frac{1}{r(3,2)}\right)^{-1} \sum_{\ell=0}^L m_\star(2)^\ell,
\ee
and the proposition follows by taking $L\to\infty$.
\end{proof}

\begin{remark}
The above lemma can be generalized to systems with arbitrary set $S$ and underlying dynamics,
such that $S_\star=\{x,y\}$ with $x,y$ sites at graph-distance 2. In that case, we have the lower bound
\be
\label{masha}
\col{\liminf_{N\to\infty}} \frac{N}{d_N^2} \Capac_N \left(\mathcal{E}_N(x),\mathcal{E}_N(y)\right)
 \geq  \sum_{v\in S\setminus\{x,y\}} \left(\frac{1}{r(x,v)}+\frac{1}{r(y,v)}\right)^{-1}\frac{1}{1-m_\star(v)}.
\ee
This can easily be proved by restricting the Dirichlet form to those jumps
tha\col{t h}ave at most one vertex $v\in S \setminus S_\star$ with a positive number of particles, and then proceeding as above.
Notice that if it  does not exists $v\in S$ such that $r(x,v)>0$ and $r(y,v)>0$
then the r.h.s. of \eqref{masha} is zero, suggesting the existence of an additional (larger)
time\col{-}scale.
\end{remark}

\subsection{Upper bound on capacities}

\begin{proposition}\label{prop-capa2-ub}
Let the underlying random walk  be as in (\ref{linear-dynamics}), with $\kappa=3$.
Furthermore, suppose that $d_N$ decays subexponentially and $d_N\log N \to 0$ as $N\to\infty$.
 Then,
\be
\col{\limsup_{N\to\infty}} \frac{N}{d_N^2} \Capac_N \left(\mathcal{E}_N(1),\mathcal{E}_N(3)\right) \leq \left(\frac{1}{r(1,2)}+\frac{1}{r(3,2)}\right)^{-1}\frac{1}{1-m_\star(2)}.
\ee
\end{proposition}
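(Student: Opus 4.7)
The plan is to apply the Dirichlet variational principle
\[
\Capac_N(\mathcal{E}_N(1),\mathcal{E}_N(3)) = \inf\{D_N(F) : F \in \mathcal{F}_N(\mathcal{E}_N(1),\mathcal{E}_N(3))\}
\]
by exhibiting a concrete test function $F^{*}$ whose Dirichlet form matches the target. The natural candidate is the approximate equilibrium potential dictated by the lower bound (Proposition~\ref{prop-capa2-lb}): recall that the lower bound was obtained by restricting the Dirichlet form to the family of ``effective one-dimensional chains'' indexed by the value of $\eta_{2}$, each chain consisting of a zig-zag path through a bottleneck particle at site~$2$. The upper bound should realize the conductance of exactly the same family of chains.

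Concretely, fix a slowly diverging cutoff $L = L(N) \to \infty$ with $m_\star(2)^{L(N)} \to 0$ and $L = o(N)$. For each $0 \le \ell \le L$, define $h_\ell : \{0,1,\ldots,N-\ell\} \to [0,1]$ as the harmonic function of the linear resistor chain with $h_\ell(0)=0$, $h_\ell(N-\ell)=1$ and resistances
\[
R_i^{(\ell)} = \frac{1}{w_N(i)\, r(3,2)} + \frac{1}{w_N(N-\ell-i-1)\, r(1,2)}, \qquad 0 \le i \le N-\ell-1.
\]
Set $F^{*}(\eta) = h_{\eta_2}(\eta_1)$ on $\{\eta : \eta_2 \le L\}$, and extend to $\{\eta_2 > L\}$ by a smooth function depending only on $\eta_1/N$ (of the type $\phi_\varepsilon$ used in Section~\ref{sec-cap-scale1-ub}), so that $F^{*} \equiv 1$ on $\mathcal{E}_N(1)$ and $F^{*} \equiv 0$ on $\mathcal{E}_N(3)$.

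The estimate of $D_N(F^{*})$ is then split into three contributions, by analogy with Section~\ref{sec-cap-scale1-ub}. (a)~For each $\ell \le L$, the $(1,2)/(2,1)$ and $(2,3)/(3,2)$ transitions at configurations with $\eta_2 \in \{\ell,\ell+1\}$ reproduce, up to a factor $d_N^2 m_\star(2)^\ell/Z_{N,S}$ coming from Lemma~\ref{lem-asymp-wN} exactly as in~\eqref{eq-bounddnsq}, the effective conductance $1/\sum_i R_i^{(\ell)}$ of the chain on which $h_\ell$ is harmonic, which in turn equals $2 d_N/[(1/r(1,2) + 1/r(3,2)) N^2](1+o(1))$. (b)~Cross-level transitions for $\ell < L$: a direct scaling shows $h_\ell(j) \approx \psi(j/(N-\ell))$ for a fixed smooth $\psi$, so that $h_{\ell+1}(j) - h_\ell(j) = O(1/N)$ and the corresponding contribution is of lower order. (c)~Contributions from $\{\eta_2 > L\}$: the stationary mass of this set is $O(m_\star(2)^L \cdot d_N/N)$ by an induction analogous to Proposition~\ref{prop-limZN}, which vanishes by the choice of $L$; the Dirichlet form on this region is controlled using the same smoothing argument as in Lemma~\ref{lem-outsidetubes}. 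Adding (a) over $\ell$ yields the geometric series $\sum_{\ell \ge 0} m_\star(2)^\ell = 1/(1-m_\star(2))$, and combining with Proposition~\ref{prop-limZN} produces the claimed upper bound.

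The main obstacle will be the careful handling of the cross-level errors in~(b) together with the behaviour of $h_\ell$ near the boundaries $j \in \{0, N-\ell\}$, where a naive asymptotic approximation of $h_\ell$ breaks down: here the resistances $R_i^{(\ell)}$ become of a different order of magnitude (because $w_N(0)=1$ instead of $w_N(k) \asymp d_N/k$), and a crude bound produces contributions of order $d_N/N$, worse than the target $d_N^2/N$. To control this, $F^{*}$ must be tuned exactly at the boundary so that $\Delta F^{*} = 0$ on the transition out of $\mathcal{E}_N(1)$ and $\mathcal{E}_N(3)$, which is ensured by our choice $h_\ell(N-\ell)=1$ and $h_\ell(0)=0$ for every $\ell$. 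The subexponential decay of $d_N$ is then used to let $L$ diverge fast enough to kill the tail in (c) without blowing up the boundary-layer errors from (b).
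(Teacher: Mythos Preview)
Your strategy has a genuine gap: the test function $F^{*}(\eta)=h_{\eta_2}(\eta_1)$ does \emph{not} produce the sharp constant except in the symmetric case $r(1,2)=r(3,2)$, because the contribution you call (b) is of the \emph{same} order as the main term, not lower.

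The point is that every transition in this model changes $\eta_2$: the $(1,2)$ move sends $(j,\ell)\to(j-1,\ell+1)$ and the $(2,3)$ move sends $(j,\ell)\to(j,\ell-1)$. With $h_\ell(j)\approx\psi\bigl(j/(N-\ell)\bigr)$ one computes, for $u=j/N$,
\[
h_{\ell+1}(j-1)-h_\ell(j)\;\approx\;-\tfrac{(1-u)}{N}\,\psi'(u),
\qquad
h_{\ell-1}(j)-h_\ell(j)\;\approx\;-\tfrac{u}{N}\,\psi'(u),
\]
so the $(2,3)$ difference is \emph{not} $O(1/N)$ uniformly but rather $\Theta(1/N)$ for bulk $u$, and its squared contribution after weighting by $\mu_N(\eta)\,\eta_2(d_N+\eta_3)r(2,3)\asymp d_N^3/(jZ_{N,S})$ is of the same scale $d_N^2/N$ as the target. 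Carrying the integrals through (with $a=1/r(3,2)$, $b=1/r(1,2)$, $C=a+b$, and $\psi'(u)=\tfrac{2}{C}\bigl(au+b(1-u)\bigr)$) yields
\[
\frac{N}{d_N^2}\,D_N(F^{*})\;\longrightarrow\;\frac{a^{2}+4ab+b^{2}}{6\,C\,ab\,(1-m_\star(2))},
\]
which equals the claimed bound $1/\bigl(C(1-m_\star(2))\bigr)$ only when $a=b$; for $a\neq b$ it is strictly larger. So your (a)/(b) split does not work: the zig-zag chains of the lower bound overlap (level-$\ell$ and level-$(\ell+1)$ chains share the states with $\eta_2=\ell+1$), and a function that is harmonic on each \emph{collapsed} chain separately cannot be simultaneously harmonic on all of them.

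What is actually needed, and what the paper does, is a test function whose $(1,2)$-increment and $(2,3)$-increment are \emph{decoupled and separately tuned}. The paper takes
\[
G(j,\ell)=\frac{2}{C}\Bigl(\tfrac{1}{r(1,2)}\!\int_0^{\phi_{2\varepsilon}((j-1)/N)}\!(1-x)\,dx
\;+\;\tfrac{1}{r(3,2)}\!\int_0^{\phi_{2\varepsilon}((j-1)/N+(\ell/N)\wedge\varepsilon)}\!x\,dx\Bigr),
\]
so that $G(j+1,\ell)-G(j,\ell+1)$ involves only the first integrand (proportional to $(1-x)/r(1,2)$) and $G(j,\ell+1)-G(j,\ell)$ only the second (proportional to $x/r(3,2)$). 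These specific weights are exactly what makes the two weighted sums combine to the optimal constant. Your $h_\ell$ reproduces $G(\cdot,0)$ at leading order, but its $\ell$-dependence is $\partial_\ell h_\ell(j)\propto \psi'(u)\,u$ rather than the required $\propto u/r(3,2)$; that mismatch is the source of the excess.
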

\begin{proof}
Since there are only three sites, the space $E_N$ is parameterized
by the number of particles on $1$ and $2$. Let
\be
G(j,\ell) = 2 \left(\frac{1}{r(1,2)}+\frac{1}{r(3,2)}\right)^{-1} \biggl( \frac{1}{r(1,2)} \int_0^{\phi_{2\varepsilon}\left(\frac{j-1}{N}\right)}(1-x) \,\dint x +  \frac{1}{r(3,2)} \int_0^{\phi_{2\varepsilon}\left(\frac{j-1}{N}+(\frac{\ell}{N}\wedge \varepsilon)\right)} x\, \dint x  \biggr),
\ee
and consider the test function
\be
F(\eta) = G(\eta_1,\eta_2).
\ee
We then have that $G(N,0)=1$ and $G(0,0)=0$, so that $F \in \mathcal{F}_N\left(\mathcal{E}_N(1),\mathcal{E}_N(3)\right)$.

Using reversibility we can write the Dirichlet form of $F$ as
\begin{align}
D_N(F) &=\! \sum_{\eta\in E_N}\! \mu_N(\eta) \biggl( \eta_1(d_N+\eta_2)r(1,2) \left[F(\eta^{1,2})-F(\eta)\right]^2\! + \eta_2(d_N+\eta_3)r(2,3) \left[F(\eta^{2,3})-F(\eta)\right]^2\biggr) \nn\\
&= \sum_{\xi\in E_{N-1}}  \biggl( \mu_N(\xi+\partial_1)(\xi_1+1)(d_N+\xi_2)r(1,2) \left[F(\xi+\partial_2)-F(\xi+\partial_1)\right]^2 \nn\\
&\qquad \ \qquad+ \mu_N(\xi+\partial_2) (\eta_2+1) (d_N+\xi_3)r(2,3) \left[F(\xi+\partial_3)-F(\xi+\partial_2)\right]^2\biggr) \nn\\
&=\frac{1}{Z_{N,S}} \sum_{\ell=0}^{N-1} \sum_{j=0}^{N-\ell-1}  \biggl( w_N(j+1)m_\star(2)^\ell w_N(\ell) w_N(N-j-\ell-1)(j+1)(d_N+\ell)r(1,2) \nn\\
& \qquad \ \qquad \ \qquad \ \qquad \ \qquad \ \qquad  \left[G(j,\ell+1)-G(j+1,\ell)\right]^2 \nn\\
&\qquad \ \quad+ w_N(j)m_\star(2)^{\ell+1} w_N(\ell+1) w_N(N-j-\ell-1) (\ell+1) (d_N+N-j-\ell-1)r(2,3) \nn\\
&\qquad \ \qquad \ \qquad \ \qquad \ \qquad \quad  \left[G(j,\ell)-G(j,\ell+1)\right]^2\biggr).
\end{align}
By the definition of $G$, we can compute
\be \label{eq-Gmovefromxtov}
G(j+1,\ell)-G(j,\ell+1) = 2 \left(\frac{1}{r(1,2)}+\frac{1}{r(3,2)}\right)^{-1} \frac{1}{r(1,2)} \int_{\phi_{2\varepsilon}\left(\frac{j-1}{N}\right)}^{\phi_{2\varepsilon}\left(\frac{j}{N}\right)}(1-x) \,\dint x,
\ee
which is $0$ for $j\leq 2\varepsilon N$, and $j> (1-2 \varepsilon) N$. Also
\be \label{eq-Gmovefromvtoy}
G(j,\ell+1)-G(j,\ell) = 2 \left(\frac{1}{r(1,2)}+\frac{1}{r(3,2)}\right)^{-1}  \frac{1}{r(3,2)} \int_{\phi_{2\varepsilon}\left(\frac{j-1}{N}+(\frac{\ell}{N}\wedge \varepsilon)\right)}^{\phi_{2\varepsilon}\left(\frac{j-1}{N}+(\frac{\ell+1}{N}\wedge \varepsilon)\right)} x\, \dint x,
\ee
which is $0$ for $\ell\geq \varepsilon N$, and also for $j\leq \varepsilon N$ or $j> (1-2 \varepsilon) N$. Hence, by Lemma~\ref{lem-asymp-wN},
\begin{align}\label{eq-reqwiteD2nd}
D_N(F) &=\frac{d_N^3 (1+o(1))}{Z_{N,S}}\sum_{\ell=0}^{\varepsilon N} m_\star(2)^\ell  \biggl( \sum_{j=2\varepsilon N}^{ (1-2\varepsilon)N}  \frac{1}{N-j-\ell-1} r(1,2)  \left[G(j,\ell+1)-G(j+1,\ell)\right]^2 \nn\\
&+ \sum_{j=\varepsilon N}^{(1-2\varepsilon)N} \frac{1}{j} r(3,2) \left[G(j,\ell)-G(j,\ell+1)\right]^2\biggr)\\
& + \frac{d_N^2 (1+o(1))}{Z_{N,S}}\sum_{\ell=\varepsilon N+1}^{N-1} m_\star(2)^\ell  \sum_{j=\varepsilon N}^{ N-\ell-1} w_N(N-j-\ell-1) r(1,2)  \left[G(j,\ell+1)-G(j+1,\ell)\right]^2,\nn
\end{align}
where we also used the reversibility of the underlying random walk to substitute ${m_\star(2)r(2,3)=r(3,2)}$.

By~\eqref{eq-Gmovefromxtov}, and for $\ell\leq \varepsilon N$, we have
\begin{align}
\sum_{j=2\varepsilon N}^{ (1-2\varepsilon)N}  & \frac{1}{N-j-\ell-1} r(1,2)  \left[G(j,\ell+1)-G(j+1,\ell)\right]^2 \nn\\
& =\sum_{j=2\varepsilon N}^{ (1-2\varepsilon)N}   \frac{1}{N-j-\ell-1} r(1,2)  \left[ 2 \left(\frac{1}{r(1,2)}+\frac{1}{r(3,2)}\right)^{-1} \frac{1}{r(1,2)} \int_{\phi_{2\varepsilon}\left(\frac{j-1}{N}\right)}^{\phi_{2\varepsilon}\left(\frac{j}{N}\right)}(1-x) \,\dint x \right]^2 \nn\\
&= 4 \left(\frac{1}{r(1,2)}+\frac{1}{r(3,2)}\right)^{-2} \frac{1}{r(1,2)} \sum_{j=2\varepsilon N}^{ (1-2\varepsilon)N} \int_{\phi_{2\varepsilon}\left(\frac{j-1}{N}\right)}^{\phi_{2\varepsilon}\left(\frac{j}{N}\right)}(1-x) \,\dint x \frac{1}{N} \int_{\phi_{2\varepsilon}\left(\frac{j-1}{N}\right)}^{\phi_{2\varepsilon}\left(\frac{j}{N}\right)}\frac{1-x}{1-\frac{j+\ell+1}{N}} \,\dint x.
\end{align}
Then, by the properties of $\phi_{2\varepsilon}$
and using  that $\frac{\ell+2}{N}\leq 2\varepsilon$ for $N$ big enough, 
we get
\be
\int_{\phi_{2\varepsilon}\left(\sfrac{j-1}{N}\right)}^{\phi_{2\varepsilon}\left(\sfrac{j}{N}\right)}\frac{1-x}{1-\frac{j+\ell+1}{N}} \,\dint x \leq \left(\phi_{2\varepsilon}\left(\sfrac{j}{N}\right)-\phi_{2\varepsilon}\left(\sfrac{j-1}{N}\right) \right)\frac{1-\phi_{2\varepsilon}\left(\sfrac{j-1}{N}\right)}{1-\sfrac{j+\ell+1}{N}} \leq (1+\sqrt{\varepsilon})\frac{\phi_{2\varepsilon}\left(1-\sfrac{j-1}{N}\right)}{1-\frac{j-1}{N}-2\varepsilon}\,.
\ee
Using the fundamental theorem of calculus, and that $\phi_{2\varepsilon}(2\varepsilon)=0$,
\be
\phi_{2\varepsilon}\left(1-\sfrac{j-1}{N}\right) = \int_{2\varepsilon}^{1-\frac{j-1}{N}} \phi'_{2\varepsilon}(x) \, \dint x \leq \left(1-\frac{j-1}{N}-2\varepsilon \right)(1+\sqrt{\varepsilon}).
\ee
Hence,
\be
\int_{\phi_{2\varepsilon}\left(\frac{j-1}{N}\right)}^{\phi_{2\varepsilon}\left(\frac{j}{N}\right)}\frac{1-x}{1-\frac{j+\ell+1}{N}} \,\dint x \leq \frac{1}{N}(1+\sqrt{\varepsilon})^2,
\ee
so that
\begin{align}
\sum_{j=2\varepsilon N}^{ (1-2\varepsilon)N} & \frac{1}{N-j-\ell-1} r(1,2)  \left[G(j,\ell+1)-G(j+1,\ell)\right]^2 \nn\\
& \leq \frac{4(1+\sqrt{\varepsilon})^2}{N^2}  \left(\frac{1}{r(1,2)}+\frac{1}{r(3,2)}\right)^{-2} \frac{1}{r(1,2)}  \sum_{j=2\varepsilon N}^{ (1-2\varepsilon)N} \int_{\phi_{2\varepsilon}\left(\frac{j-1}{N}\right)}^{\phi_{2\varepsilon}\left(\frac{j}{N}\right)}(1-x) \,\dint x  \nn\\
& = \frac{2(1+\sqrt{\varepsilon})^2}{N^2}  \left(\frac{1}{r(1,2)}+\frac{1}{r(3,2)}\right)^{-2} \frac{1}{r(1,2)} .
\end{align}
Similarly, we can use~\eqref{eq-Gmovefromvtoy} to bound, for $\ell\leq \varepsilon N$,
\begin{align}
\sum_{j=\varepsilon N}^{(1-2\varepsilon)N} & \frac{1}{j} r(3,2) \left[G(j,\ell)-G(j,\ell+1)\right]^2 \nn\\
&=4 \left(\frac{1}{r(1,2)}+\frac{1}{r(3,2)}\right)^{-2}  \frac{1}{r(3,2)}
\sum_{j=\varepsilon N}^{(1-2\varepsilon)N}\int_{\phi_{2\varepsilon}\left(\frac{j+\ell-1}{N}\right)}^{\phi_{2\varepsilon}\left(\frac{j+\ell}{N}\right)} x\, \dint x \frac{1}{N}\int_{\phi_{2\varepsilon}\left(\frac{j+\ell-1}{N}\right)}^{\phi_{2\varepsilon}\left(\frac{j+\ell}{N}\right)} \frac{x}{j/N}\, \dint x \nn\\
& \leq \frac{2(1+\sqrt{\varepsilon})^2}{N^2}  \left(\frac{1}{r(1,2)}+\frac{1}{r(3,2)}\right)^{-2} \frac{1}{r(3,2)} .
\end{align}
To bound the third line of~\eqref{eq-reqwiteD2nd}, notice that
$|G(j,\ell+1)-G(j+1,\ell)|\leq 1$ and $m_\star(2)^{\ell}\leq m_\star(2)^{\varepsilon N}$, so that
\begin{align}
\sum_{\ell=\varepsilon N+1}^{N-1} &m_\star(2)^\ell
 \sum_{j=\varepsilon N}^{ N-\ell-1} w_N(N-j-\ell-1)  \left[G(j,\ell+1)-G(j+1,\ell)\right]^2 \\
&\leq m_\star(2)^{\varepsilon N} \sum_{\ell=\varepsilon N+1}^{N-1}
\left(1+  \sum_{j=\varepsilon N}^{ N-\ell-2}\frac{d_N(1+o(1))}{N-j-\ell-1} \right)
\leq m_\star(2)^{\varepsilon N} N (1+d_N\log N(1+o(1))).\nn
\end{align}
Hence, we obtain
\begin{align}
D_N(F) &\leq \frac{d_N^3 (1+o(1))}{N^2 Z_{N,S}} 2(1+\sqrt{\varepsilon})^2 \left(\frac{1}{r(1,2)}+\frac{1}{r(3,2)}\right)^{-1} \sum_{\ell=0}^{\varepsilon N} m_\star(2)^\ell   \nn\\
&\qquad \ \qquad+ \frac{d_N^2 (1+o(1))}{Z_{N,S}} m_\star(2)^{\varepsilon N} N (1+d_N\log N).
\end{align}
Taking the limit $N\to\infty$ gives
\begin{align}
\col{\limsup_{N\to\infty}} \frac{N}{d_N^2}D_N(F) &\leq \lim_{N\to\infty}  \frac{d_N (1+o(1))}{N Z_{N,S}} 2(1+\sqrt{\varepsilon})^2 \left(\frac{1}{r(1,2)}+\frac{1}{r(3,2)}\right)^{-1} \sum_{\ell=0}^{\varepsilon N} m_\star(2)^\ell  \nn\\
&\qquad \ \qquad + \frac{d_N (1+o(1))}{NZ_{N,S}} \frac{1}{d_N}m_\star(2)^{\varepsilon N} N^2 (1+d_N\log N) \nn\\
&=\frac{2(1+\sqrt{\varepsilon})^2}{\kappa_\star} \left(\frac{1}{r(1,2)}+\frac{1}{r(3,2)}\right)^{-1} \frac{1}{1-m_\star(2)},
\end{align}
where we used that $d_N$ decays subexponentially to show that the second part converges to $0$.
The proposition follows by taking the limit $\varepsilon\to 0$ and noting that $\kappa_\star=2$.
\end{proof}

\subsection{Proof of Theorem \ref{thm-second-timescale}}\label{sec-Proof-thm2}
The proof runs similarly to that of Theorem  \ref{thm-first-timescale}.
\begin{proof}[Proof of \col{Theorem} \ref{thm-second-timescale}(i)]
As a consequence of Propositions \ref{prop-capa2-lb} and \ref{prop-capa2-ub},
if $d_N$ decays subexponentially and $d_N\log N \to 0$ as $N\to\infty$,
\be\label{eq-capa2}
\lim_{N\to\infty} \frac{N}{d_N^2} \Capac_N \left(\mathcal{E}_N(1),\mathcal{E}_N(3)\right)
= \left(\frac{1}{r(1,2)}+\frac{1}{r(3,2)}\right)^{-1}\frac{1}{1-m_\star(2)}\,.
\ee
In view of \eqref{eq-hitting-time}, recalling that $\Capac_N(A,B)=\Capac_N(B,A)$,
and applying  \eqref{eq-potential-final}, this provides  formula \eqref{eq-mean-time2}.
\end{proof}
\begin{proof}[Proof of \col{Theorem} \ref{thm-second-timescale}(ii)]
As in the proof of \col{Theorem} \ref{thm-first-timescale}(ii),
the convergence follows from Theorem 2.7 of \cite{BelLan10}
once condition \eqref{H0} of Section \ref{subsec-outline} is verified
for the sequence $\theta_N= N/d_N^2$, $N\geq 1$.
By Lemma 6.8 of \cite{BelLan10} (see \eqref{eq-rate-capacity} in Section \ref{subsec-outline})
and using Proposition \ref{prop-met-sets} and \eqref{eq-capa2}, we get
\be
\lim_{N\to\infty} \frac{N}{d_N^2} r_N^{\cE_\star}\left(\cE_N^1,\cE_N^3\right)
= \lim_{N\to\infty} \frac{N}{d_N^2} r_N^{\cE_\star}\left(\cE_N^3,\cE_N^1\right)
=   \left(\frac{1}{r(1,2)}+\frac{1}{r(3,2)}\right)^{-1}\frac{1}{1-m_\star(2)}\,,
\ee
proving \eqref{H0}.
Finally, \eqref{eq-0time2} is proved similarly to \eqref{eq-0time}.
\end{proof}

\section{Dynamics of the condensate on the third time-scale}\label{sec-cap-third-timescale}
In this last \col{s}ection we study the third time-scale that appears when the
condensate moves between sites in $S_{\star}$ that are at graph-distance
larger than 2.
\subsection{Lower bound on capacities}
\label{Lower-bound-third}

\begin{proposition}\label{prop-capa3-lb}
Let the underlying random walk  be as in (\ref{linear-dynamics}), with $\kappa\geq 4$.
Then, for $d_N\log_N\to0$ as $N\to\infty$,
\be\label{eq-lbu-capa3}
\col{\liminf_{N\to\infty}} \frac{N^2}{d_N^3} \Capac_N \left(\mathcal{E}_N(1),\mathcal{E}_N(\kappa)\right) \geq 3 \biggl( \sum_{\col{p}=2}^{\kappa-2}\frac{1}{m_\star(\col{p}) r(\col{p},\col{p}+1)} \biggr)^{-1}.
\ee
\end{proposition}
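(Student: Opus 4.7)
The argument will follow the strategy of Proposition~\ref{prop-capa2-lb}. Fix a test function $F \in \mathcal{F}_N(\mathcal{E}_N(1), \mathcal{E}_N(\kappa))$ and restrict the Dirichlet form $D_N(F)$ to a subfamily of configurations that should carry the dominant contribution. Guided by the intuition that a transition from the condensate at site $1$ to the condensate at site $\kappa$ proceeds by moving single particles one at a time through the chain $S \setminus S_\star = \{2, \ldots, \kappa-1\}$, the natural choice is to restrict to configurations with at most one particle outside $\{1, \kappa\}$. Parameterize this subfamily by $(j, p)$, where $j = \eta_\kappa \in \{0, 1, \ldots, N-1\}$ and $p \in \{2, 3, \ldots, \kappa-1\} \cup \{\varnothing\}$ denotes the position of the ``mobile'' particle (with $\varnothing$ meaning no mobile particle, in which case the configuration is $(N - j, 0, \ldots, 0, j)$). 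Writing $G(j, p)$ for the value of $F$ on the corresponding configuration, the restricted Dirichlet form splits naturally into three types of transitions: \emph{emission} $(j, \varnothing) \leftrightarrow (j, 2)$, \emph{migration} $(j, p) \leftrightarrow (j, p+1)$ for $2 \leq p \leq \kappa - 2$, and \emph{absorption} $(j, \kappa - 1) \leftrightarrow (j+1, \varnothing)$.

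\textbf{Applying effective resistance twice.} Using Lemma~\ref{lem-asymp-wN} together with the reversibility identity $m_\star(p) r(p, p+1) = m_\star(p+1) r(p+1, p)$, each migration transition at $p$ carries a uniform asymptotic weight proportional to $d_N^3 \cdot m_\star(p) r(p, p+1)$ (after accounting for the stationary-weight factors at the condensates), so the effective resistance of the migration chain is
\[
\sum_{p=2}^{\kappa-2} \frac{1}{m_\star(p) r(p, p+1)}.
\]
For each fixed $j$, Lemma~\ref{lem-eff-resistance} bounds the migration contribution below by $[G(j+1, \varnothing) - G(j, \varnothing)]^2$ times the inverse of this quantity. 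A second application of Lemma~\ref{lem-eff-resistance}, now along the $j$-direction with boundary values $G(0, \varnothing) = 0$ and $G(N, \varnothing) = 1$, combined with the normalization $Z_{N, S} \sim 2 d_N / N$ from Proposition~\ref{prop-limZN}, produces the desired scaling $d_N^3 / N^2$. The emission and absorption transitions contribute edges with much smaller resistance, since their rates are enhanced by the factor $\eta_1 \approx N$ or $\eta_\kappa \approx N$ coming from the inclusion dynamics, and hence are negligible in the limit.

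\textbf{Source of the constant 3 and main obstacle.} The constant $3$ should emerge analogously to the factor $1/(1 - m_\star(2)) = \sum_\ell m_\star(2)^\ell$ in Proposition~\ref{prop-capa2-lb}, by allowing a bounded number $L$ of extra particles to accumulate near the condensates and near the mobile particle, summing the resulting geometric-type series, and then letting $L \to \infty$. The principal difficulty is carrying out this bookkeeping carefully in the multi-site setting: since the moving particle can occupy any of $\kappa - 2$ intermediate sites, the combinatorics of extra particles is significantly richer than in the $\kappa = 3$ case, and one must show that only configurations with a bounded ``excess'' of particles along the chain contribute to leading order. Unlike the sharp second time-scale constant, the statement of Theorem~\ref{thm-third-timescale} only asserts a non-sharp lower bound, so some slack is permissible; the $3$ likely arises from a loose comparison where the boundary edges (emission and absorption) are combined with the bulk migration chain via Lemma~\ref{lem-eff-resistance}. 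The hypothesis $d_N \log N \to 0$ is needed only to apply Lemma~\ref{lem-asymp-wN}; the subexponential decay of $d_N$ required by Theorem~\ref{thm-third-timescale} will be used in the matching upper bound rather than in this lower bound.
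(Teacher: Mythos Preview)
Your overall architecture matches the paper's proof: restrict to configurations with exactly one mobile particle outside $\{1,\kappa\}$, parameterize by $(j,p)$, apply Lemma~\ref{lem-eff-resistance} first in the $p$-direction and then in the $j$-direction, and observe that the emission and absorption edges have resistance which is $o(1)$ relative to the migration edges. That part is fine.

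Where you go wrong is the source of the constant $3$. It does \emph{not} come from allowing a bounded number $L$ of extra particles and summing a geometric series, and there is no analogue of the $\sum_\ell m_\star(2)^\ell$ step from Proposition~\ref{prop-capa2-lb} in this proof. The paper restricts to exactly one mobile particle throughout and never introduces an $L$. The constant $3$ is purely arithmetic: the migration weight for the edge $(j,p)\to(j,p+1)$ is asymptotically
\[
\frac{d_N^4}{Z_{N,S}}\cdot\frac{m_\star(p)\,r(p,p+1)}{j(N-j-1)},
\]
so after the first series-law reduction one has, for each $j$, the factor $\frac{d_N^4}{Z_{N,S}}\frac{1}{j(N-j-1)}\bigl(\sum_p\frac{1}{m_\star(p)r(p,p+1)}\bigr)^{-1}$ times $[F(j,1)-F(j,\kappa)]^2$. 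The second series-law step then produces $\bigl(\sum_{j=1}^{N-2} j(N-j-1)\bigr)^{-1}=6/N^3\,(1+o(1))$, and combining with $Z_{N,S}\sim 2d_N/N$ yields $6/2=3$. Your claim that the migration weight is ``uniform'' in $j$ is exactly what obscures this: the factor $1/\bigl(j(N-j-1)\bigr)$ is what generates both the extra $1/N^2$ scaling and the numerical constant.

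So the ``principal difficulty'' you flagged (bookkeeping extra particles at multiple sites) does not exist; the proof is shorter than you anticipated. Drop the third paragraph of your plan and instead track the $j$-dependent conductance carefully.
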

\begin{proof}
This lower bound is given by transporting particles, one at a time, from $1$ to $\kappa$.
To see this, consider any function $F:E_N\mapsto \R$ such that $F(\eta_1=N)=1$ and $F(\eta_\kappa=N)=0$.  We first use reversibility to write
\begin{align}
D_N(F) &= \frac12 \sum_{\eta\in E_N} \mu_N(\eta) \sum_{z,w \in S} \eta_z(d_N+\eta_w) r(z,w) [F(\eta^{z,w})-F(\eta)]^2 \nn\\
&= \sum_{\eta\in E_N} \mu_N(\eta) \sum_{\col{p}=1}^{\kappa-1} \eta_{\col{p}} (d_N+\eta_{\col{p}+1}) r(\col{p},\col{p}+1) [F(\eta^{\col{p},\col{p}+1})-F(\eta)]^2.
\end{align}
We bound this from below by considering only configurations parameterized by $(j,\col{p})$,
 where ${\eta_1=j}, {\eta_\kappa=N-j-1}$, and one extra particle is on the site $\col{p}$ (that may also be $1$ or $\kappa$).
 This gives
\begin{align}
&D_N(F) \geq \sum_{j=0}^{N-1} \sum_{\col{p}=1}^{\kappa-1} \mu_N(j,\col{p}) \eta_\ell (d_N+\eta_{\col{p}+1}) r(\col{p},\col{p}+1)[F(j,\col{p}+1)-F(j,\col{p})]^2 \nn\\
&=\frac{1}{Z_N} \sum_{j=0}^{N-1} \biggl( \sum_{\col{p}=2}^{\kappa-2} w_N(j)w_N(N-j-1) m_\star(\col{p}) w_N(1) d_N r(\col{p},\col{p}+1) [F(j,\col{p}+1)-F(j,\col{p})]^2 \nn\\
&\quad+ w_N(j+1)w_N(N-j-1)(j+1)d_N r(1,2)  [F(j,2)-F(j,1)]^2  \nn\\
&\quad + w_N(j)w_N(N-j-1)m_\star(\kappa-1) w_N(1) (d_N+N-j-1)r(\kappa-1,\kappa) [F(j,\kappa)-F(j,\kappa-1)]^2\biggr) \nn\\
&\geq \frac{d_N^4}{Z_N} \sum_{j=1}^{N-2} \biggl( \sum_{\col{p}=2}^{\kappa-2}\frac{m_\star(\col{p})}{j(N-j-1)}  r(\col{p},\col{p}+1) [F(j,\col{p}+1)-F(j,\col{p})]^2 \nn\\
&\quad+ \frac{1}{d_N(N-j-1)}r(1,2)  [F(j,2)-F(j,1)]^2  + \frac{m_\star(\kappa-1)}{d_N j}r(\kappa-1,\kappa)  [F(j,\kappa)-F(j,\kappa-1)]^2\biggr)\nn\\
& \quad+ \{j=0 {\rm\ term}\}+ \{j=N-1 {\rm\ term}\},
\end{align}
where we used Lemma~\ref{lem-asymp-wN} for the second inequality. By Lemma~\ref{lem-eff-resistance}, we can bound $D_N(F)$ further
\begin{align}
D_N(F) &\geq \frac{d_N^4}{Z_N} \sum_{j=1}^{N-2} [F(j,1)-F(j,\kappa)]^2 \frac{1}{j(N-j-1)} \biggl( \sum_{\col{p}=1}^{\kappa-2}\frac{1}{m_\star(\col{p}) r(\col{p},\col{p}+1)} +o(1)\biggr)^{-1}\nn\\
&\qquad + \{j=0 {\rm\ term}\}+ \{j=N-1 {\rm\ term}\} \nn\\
&\geq \frac{d_N^4}{Z_N} \biggl( \sum_{\col{p}=2}^{\kappa-2}\frac{1}{m_\star(\col{p}) r(\col{p},\col{p}+1)} +o(1)\biggr)^{-1} \biggl(\sum_{j=1}^{N-2} j(N-j-1)\biggr)^{-1},
\end{align}
where we ignored the terms for $j=0$ and $j=N-1$. The sum in the last brackets equals $\frac16 (N-2)(N-1)N$.
Hence,
\be
\frac{N^2}{d_N^3} D_N(F) \geq \frac{d_N}{N Z_N} \col{\frac{6 N^2}{(N-2)(N-1)}}
\biggl( \sum_{\col{p}=2}^{\kappa-2}\frac{1}{m_\star(\col{p}) r(\col{p},\col{p}+1)} +o(1)\biggr)^{-1}\,,
\ee
that in the limit $N\to\infty$ indeed complete\col{s} the proof.
\end{proof}

\begin{remark}
On general graphs, this lower bound on the capacity between sites in $S_\star$
that are at graph distance at least three is also valid, since the Dirichlet form
can always be restricted to only allow for jumps on one specific path,
and then restricting the jumps further as in the proof.
This proves that also in general systems longer time-scales cannot be present.
\end{remark}

\subsection{Upper bound on capacities}
We have the following upper bound:
\begin{proposition}\label{prop-capa3-ub}
Let the underlying random walk  be as in (\ref{linear-dynamics}), with $\kappa\geq 4$.
Furthermore, suppose that $d_N$ decays subexponentially and $d_N\log N \to 0$ as $N\to\infty$.
Then,
\be\label{eq-ubu-capa3}
\col{\limsup_{N\to\infty}} \frac{N^2}{d_N^3} \Capac_N \left(\mathcal{E}_N(1),\mathcal{E}_N(\kappa)\right)
\leq  3\left(\sum_{p=2}^{\kappa-2}\frac{(1-m_\star(p))(1-m_\star(p+1))}{m_\star(p) r(p,p+1)}\right)^{-1}.
\ee
\end{proposition}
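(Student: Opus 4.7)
I would apply the Dirichlet variational principle: construct an explicit test function $F \in \mathcal{F}_N(\mathcal{E}_N(1), \mathcal{E}_N(\kappa))$ whose Dirichlet form matches the claimed bound up to $(1+o(1))$ as $N \to \infty$ and then $\varepsilon \to 0$. The scheme closely parallels Proposition \ref{prop-capa2-ub}, extended to the longer chain $\{1, \ldots, \kappa\}$ with $\kappa \geq 4$. Guided by the lower bound argument in Proposition \ref{prop-capa3-lb}, the dominant mechanism on the scale $d_N^3/N^2$ is the transport of a single wandering particle through $\{2, \ldots, \kappa-1\}$ while the bulk $N-1$ particles remain split between sites $1$ and $\kappa$, so I build $F$ to have its variation concentrated on this transit.

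Specifically, generalising the two-variable function $G(j,\ell)$ of Proposition \ref{prop-capa2-ub}, I would introduce the progress coordinates $\xi_p(\eta) = \tfrac{\eta_1 - 1}{N} + \sum_{q=2}^{p}\left(\tfrac{\eta_q}{N}\wedge \varepsilon\right)$ for $p = 1, \ldots, \kappa-1$, and set
\begin{equation*}
F(\eta) = \sum_{p=1}^{\kappa-1} \alpha_p \int_0^{\phi_{2\varepsilon}(\xi_p(\eta))} f_p(x)\,\dint x,
\end{equation*}
where $\phi_{2\varepsilon}$ is the smooth cutoff from Proposition \ref{prop-capa2-ub}, the kernels $f_p$ are linear combinations of $x$ and $1-x$ designed, as in Proposition \ref{prop-capa2-ub}, to tame the singularities $1/[x(1-x)]$ that would otherwise arise in the Dirichlet form, and the coefficients $\alpha_p$ are chosen so that $F = 1$ on $\mathcal{E}_N(1)$, $F = 0$ on $\mathcal{E}_N(\kappa)$, and the increments $\alpha_{p+1} - \alpha_p$ realise the discrete harmonic function on $\{1, \ldots, \kappa\}$ with conductances $C_p := m_\star(p) r(p,p+1)/[(1-m_\star(p))(1-m_\star(p+1))]$ on the edge $(p,p+1)$. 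These conductances are precisely those that will emerge from summing the Dirichlet form over cloud configurations.

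The computation of $D_N(F)$ then decomposes edge by edge along the transit. For each intermediate edge $(p,p+1)$ with $p \in \{2,\ldots,\kappa-2\}$, the crucial step is to sum the contributions coming from cloud configurations with $\eta_p = 1+a$ and $\eta_{p+1} = b$ for arbitrary $a, b \geq 0$. Using Lemma \ref{lem-asymp-wN} (so that $k w_N(k) \sim d_N$) and the geometric sums
\begin{equation*}
\sum_{a,b \geq 0} m_\star(p)^a m_\star(p+1)^b = \frac{1}{(1-m_\star(p))(1-m_\star(p+1))},
\end{equation*}
combined with the bulk $j$-integration $\sum_j (\cdot)/[j(N-j-1)] \approx \tfrac{1}{6 N^3}\int_0^1 (\cdot)/[x(1-x)]\,\dint x$ and the asymptotic $\frac{d_N}{NZ_{N,S}} \to \tfrac{1}{\kappa_\star} = \tfrac{1}{2}$ from Proposition \ref{prop-limZN}, the standard series-law argument on the $p$-coordinate gives the claimed bound with the constant $3 = 6/\kappa_\star$ and the correct effective-conductance reciprocal $\sum_{p=2}^{\kappa-2} (1-m_\star(p))(1-m_\star(p+1))/[m_\star(p)r(p,p+1)]$ in the denominator.

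The main obstacle is the careful bookkeeping of the cloud sums and of the configurations lying outside the transit slice. Configurations with particles on non-adjacent intermediate sites, or with clouds exceeding $\varepsilon N$ particles, must be shown to contribute only lower-order terms; as in Proposition \ref{prop-capa2-ub}, the subexponential decay of $d_N$ is essential to absorb the tails $m_\star(p)^{\varepsilon N}$ that arise when truncating the geometric sums at size $\varepsilon N$. Additionally, the endpoint jumps $(1,2)$ and $(\kappa-1,\kappa)$, where the rates acquire factors $\eta_1 d_N$ or $(d_N + \eta_\kappa)$ of different order from the bulk rates, need to be handled separately; a direct calculation analogous to Lemma \ref{lem-outsidetubes} shows that their contributions are of the same leading order as the bulk edges and do not alter the final constant.
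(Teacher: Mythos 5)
Your overall strategy is the same as the paper's: build a test function that tracks a single wandering particle transiting from $1$ to $\kappa$ while $N-1$ particles sit split between the two ends, sum the Dirichlet form over cloud configurations on adjacent pairs of intermediate sites to produce the geometric factors $(1-m_\star(p))^{-1}(1-m_\star(p+1))^{-1}$, use the bulk $j$-integration to extract the factor $\tfrac{1}{6N^3}$, invoke Proposition~\ref{prop-limZN} for $\tfrac{d_N}{NZ_{N,S}}\to\tfrac{1}{2}$, and close with a series-law optimization over the coefficients (the paper writes $c_\ell = g(\ell)-g(\ell+1)$ with $g(2)=1, g(\kappa-1)=0$). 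The subexponential decay of $d_N$ and the truncation of clouds at size $\varepsilon N$ are also used as you describe.

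However, your treatment of the endpoint jumps $(1,2)$ and $(\kappa-1,\kappa)$ is wrong as stated and masks a key point. You claim these contributions are ``of the same leading order as the bulk edges and do not alter the final constant''; these two assertions are mutually contradictory, and neither matches what happens. On the $(1,2)$ edge the rate is $\eta_1(d_N+\eta_2)$ with $\eta_1\approx j = O(N)$, which is larger than the bulk rates $\eta_p(d_N+\eta_{p+1})=O(d_N)$ by a factor of order $N/d_N$. If the test function changed by $O(1/N)$ under a $(1,2)$ jump, that edge alone would produce a Dirichlet form of order $d_N^2/N\gg d_N^3/N^2$ and the bound would be worthless. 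The resolution is that the test function must be \emph{exactly constant} under the $(1,2)$ and $(\kappa-1,\kappa)$ jumps whenever the cloud on $\{2,\ldots,\kappa-1\}$ has fewer than $\varepsilon N$ particles, and the residual contribution from clouds of size $\geq\varepsilon N$ is killed by $m_\star^{\varepsilon N}$ together with the subexponential decay of $d_N$. The paper achieves this by choosing the progress coordinate $\tfrac{\eta_1}{N}+\bigl(\tfrac{1}{N}\sum_{q=2}^{\ell}\eta_q\bigr)\wedge\varepsilon$, which is invariant under $(1,2)$ (since $\eta_1+\eta_2$ is preserved when the truncation is inactive) and never involves $\eta_{\kappa-1}$, hence is always invariant under $(\kappa-1,\kappa)$. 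Your coordinate $\xi_p = \tfrac{\eta_1-1}{N}+\sum_{q=2}^{p}(\tfrac{\eta_q}{N}\wedge\varepsilon)$ achieves the same cancellation for $p\geq 2$, but for $p=1$ and $p=\kappa-1$ it does change under the corresponding endpoint jump; so it is essential (not incidental) that your optimization forces $\alpha_1=\alpha_{\kappa-1}=0$ (which it does, since $1-m_\star(1)=1-m_\star(\kappa)=0$ makes $C_1=C_{\kappa-1}=\infty$), and you should say so explicitly rather than waving at a Lemma~\ref{lem-outsidetubes}-type bound. That lemma shows negligibility, not same-order comparability, and the mechanism here is different anyway (exact invariance of the test function plus subexponential decay of $d_N$, rather than a smallness estimate on the measure of a tube).
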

\begin{proof}
From the lower bound, we can guess that a good test function should be of the form
\be
F(\eta) =  6 \sum_{\ell=2}^{\kappa-2} c_\ell \int_0^{\phi_{2\varepsilon}\left(\frac{\eta_1}{N}
+ \left((\frac{1}{N}\sum_{p=2}^\ell \eta_p) \wedge \varepsilon\right)\right)}x(1-x) \, \dint x,
\ee
where we need to choose the constants such that
\be\label{eq-normalizationcell}
\sum_{\ell=2}^{\kappa-2}c_\ell = 1,
\ee
so that
\be
F(\eta_1=N) = 6 \sum_{\ell=2}^{\kappa-2}c_\ell \int_0^1 x(1-x) \, \dint x = 1.
\ee
We obviously also have that $F(\eta_\kappa=N)=0$,
 so that $F \in \mathcal{F}_N\left(\mathcal{E}_N(1),\mathcal{E}_N(\kappa)\right)$.
  We optimize over the constants $c_\ell$ at the end.

Because of the choice of $\phi_{2\varepsilon}$,
 we have that $F(\eta^{p,p+1}) - F(\eta) =0$ for all $p=1,\ldots,\kappa-1$ if $j<\varepsilon N$ \col{or} $j>(1-\varepsilon)N$.
 Denote by $\ell$ the total number of particles on sites $2,\ldots,\kappa-1$. Then, we have that, for $\ell < \varepsilon N$,
\be
F(\eta^{1,2}) - F(\eta) =0.
\ee
We also have, for all values of $\ell$, that
\be
F(\eta^{\kappa-1,\kappa}) - F(\eta) =0.
\ee
Hence, also using reversibility in the first equality, we can  rewrite the Dirichlet form of $F$ as,
\begin{align}\label{split-D3-1}
D_N(F) &= \sum_{\eta \in E_N} \mu_N(\eta) \sum_{q=1}^{\kappa-1} \eta_q (\eta_{q+1}+d_N) r(q,q+1) \left[F(\eta^{q,q+1}) - F(\eta) \right]^2 \nn\\
&= \sum_{\ell=0}^{\varepsilon N - 1} \sum_{j=\varepsilon N}^{(1-\varepsilon)N}
\sum_{\eta_2+\ldots+\eta_{\kappa-1}=\ell} \mu_N(\eta) \sum_{q=2}^{\kappa-2}
\eta_q (\eta_{q+1}+d_N) r(q,q+1) \left[F(\eta^{q,q+1}) - F(\eta) \right]^2 \\
&\qquad + \sum_{\ell=\varepsilon N}^{N} \sum_{j=\varepsilon N}^{N-\ell} \sum_{\eta_2+\ldots+\eta_{\kappa-1}=\ell}
 \mu_N(\eta) \sum_{q=1}^{\kappa-2} \eta_q (\eta_{q+1}+d_N) r(q,q+1) \left[F(\eta^{q,q+1}) - F(\eta) \right]^2. \nn
\end{align}
For small $\ell$, we split the sum
\be\label{eq-spliitsum3rd}
 \sum_{\eta_2+\ldots+\eta_{\kappa-1}=\ell} = \sum_{p=2}^{\kappa-2} \sum_{\eta_p=1}^{\ell} \1 \{\eta_{p+1}=\ell-\eta_p\} +
  \sum_{\substack{\eta_2+\ldots+\eta_{\kappa-1}=\ell \\ \eta_p+\eta_{p+1} < \ell \, \forall \,  2\leq p \leq \kappa-2}}.
\ee
The first sum consists of all configurations with $\ell$ particles on at most $2$ adjacent sites in ${\{2,\ldots,\kappa-1\}}$,
and with the rest of the particles only on sites $1$ and $\kappa$, while  the second sum consists
of all other configurations.
This latter sum turns out to have a negligible contribution, as w\col{e s}how later.
Let us first analyze the first sum:
\begin{align}
 \sum_{\ell=0}^{\varepsilon N - 1} &\sum_{j=\varepsilon N}^{(1-\varepsilon)N} \sum_{p=2}^{\kappa-2} \sum_{\eta_p=1}^{\ell} \1 \{\eta_{p+1}=\ell-\eta_p\}  \mu_N(\eta) \sum_{q=2}^{\kappa-2} \eta_q (\eta_{q+1}+d_N) r(q,q+1) \left[F(\eta^{q,q+1}) - F(\eta) \right]^2 \nn\\
&= \frac{1}{Z_N}\sum_{\ell=0}^{\varepsilon N - 1}  \sum_{j=\varepsilon N}^{(1-\varepsilon)N} w_N(j) w_N(N-j-\ell) \sum_{p=2}^{\kappa-2} \sum_{\eta_p=1}^\ell w_N(\eta_p)m_\star(p)^\ell w_N(\ell-\eta_p)m_\star(p+1)^{\ell-\eta_p} \nn\\
&\qquad \ \qquad \ \qquad \ \qquad \ \qquad \ \qquad \sum_{q=p}^{(p+1)\wedge (\kappa-2)} \eta_q (\eta_{q+1}+d_N) r(q,q+1) \left[F(\eta^{q,q+1}) - F(\eta) \right]^2,
\end{align}
since all other $q$ give a $0$ contribution, because then $\eta_q=0$.

Using Lemma~\ref{lem-asymp-wN}\col{,} the above equals
\begin{align}\label{eq-thirdscalesomewhere}
\frac{d_N^4}{Z_N}&(1+o(1))\sum_{\ell=0}^{\varepsilon N - 1} \sum_{j=\varepsilon N}^{(1-\varepsilon)N} \frac{1}{j(N-j-\ell)} \sum_{p=2}^{\kappa-2} \sum_{\eta_p=1}^\ell m_\star(p)^{\eta_p} m_\star(p+1)^{\ell-\eta_p} \nn\\
&\qquad  \left( r(p,p+1)\left[F(\eta^{p,p+1}) - F(\eta) \right]^2 + \frac{d_N}{\eta_p}r(p+1,p+2) \left[F(\eta^{p+1,(p+2)\wedge(\kappa-2)}) - F(\eta) \right]^2 \right) \nn\\
&=6^2 \frac{d_N^4}{Z_N}(1+o(1))\sum_{\ell=0}^{\varepsilon N - 1}   \sum_{p=2}^{\kappa-2} \sum_{\eta_p=1}^\ell m_\star(p)^{\eta_p} m_\star(p+1)^{\ell-\eta_p} \sum_{j=\varepsilon N}^{(1-\varepsilon)N} \frac{1}{j(N-j-\ell)} \nn\\
&\qquad \ \qquad \biggl(r(p,p+1) \left[c_p \int_{\phi_{2\varepsilon}\left(\frac{j+\eta_p-1}{N}\right)}^{\phi_{2\varepsilon}\left(\frac{j+\eta_p}{N}\right)} x(1-x)\, \dint x \right]^2 \nn\\
&\qquad \ \qquad \ \qquad + \frac{d_N}{\eta_p} r(p+1,p+2)\left[c_{p+1} \int_{\phi_{2\varepsilon}\left(\frac{j+\eta_{p+1}-\1\{p<\kappa-2\}}{N}\right)}^{\phi_{2\varepsilon}\left(\frac{j+\eta_{p+1}}{N}\right)} x(1-x)\, \dint x \right]^2 \biggr).
\end{align}
Similarly to the upper bound in the second time-scale, it holds that
\begin{align}
\sum_{j=\varepsilon N}^{(1-\varepsilon)N} \frac{1}{j(N-j-\ell)} &
\left[ \int_{\phi_{2\varepsilon}\left(\frac{j+\eta_p-1}{N}\right)}^{\phi_{2\varepsilon}\left(\frac{j+\eta_p}{N}\right)} x(1-x)\, \dint x \right]^2 \nn\\
&= \frac{1}{N^2} \sum_{j=\varepsilon N}^{(1-\varepsilon)N}
\int_{\phi_{2\varepsilon}\left(\frac{j+\eta_p-1}{N}\right)}^{\phi_{2\varepsilon}\left(\frac{j+\eta_p}{N}\right)} x(1-x)\, \dint x  \int_{\phi_{2\varepsilon}\left(\frac{j+\eta_p-1}{N}\right)}^{\phi_{2\varepsilon}\left(\frac{j+\eta_p}{N}\right)}
\frac{x}{j/N}\frac{1-x}{1-(j+\ell)/N}\, \dint x \nn\\
& \leq \frac{(1+\sqrt{\varepsilon})^3}{N^3}   \int_0^1 x(1-x)\, \dint x  = \frac{(1+\sqrt{\varepsilon})^3}{6 N^3}.
\end{align}
Hence, \eqref{eq-thirdscalesomewhere} is bounded from above by
\be\label{eq-3rdmaincontri}
6 \frac{(1+\sqrt{\varepsilon})^3}{N^3} \frac{d_N^4}{Z_N}  (1+o(1)) \sum_{p=2}^{\kappa-2} c_p^2 r(p,p+1)\sum_{\ell=0}^{\varepsilon N - 1} \sum_{\eta_p=1}^\ell m_\star(p)^{\eta_p} m_\star(p+1)^{\ell-\eta_p},
\ee
because the contribution of the second part of the last line of~\eqref{eq-thirdscalesomewhere} is clearly $o(1)$ times the contribution of the first part.

To bound the contribution of the last sum in~\eqref{eq-spliitsum3rd},
we set $M=\max_{v\notin S_\star} m_\star(v)$ and observe that, for all such configurations and all $q$,
\be
m_\star^\eta w_N(\eta) \eta_q (\eta_{q+1}+d_N) \leq M^{\ell} \frac{d_N^5}{j(N-j-\ell)},
\ee
because either at least $5$ sites are occupied, or $4$ sites are occupied but $\eta_{q+1}=0$.
Then one can show, as above, that this contribution is also negligible compared to~\eqref{eq-3rdmaincontri}.

To show that the sum over $\ell \geq \varepsilon N$ in~\eqref{split-D3-1} is negligible, we write
\be
\sum_{q=1}^{\kappa-2} \eta_q (\eta_{q+1}+d_N) r(q,q+1) \leq (\kappa-3) R N^2,
\ee
where we set $R=\max_{\ell=2}^{\kappa-2} r(\ell, \ell+1)$.
Furthermore, $ \left[F(\eta^{q,q+1}) - F(\eta) \right]^2 \leq1$ and
\be
\mu_N(\eta) \leq \frac{M^{\varepsilon N}}{Z_N} w_N(\eta) = \frac{M^{\varepsilon N} N }{2 d_N} (1+o(1)) w_N(\eta).
\ee
Hence,
\begin{align}
\sum_{\ell=\varepsilon N}^{N} &\sum_{j=\varepsilon N}^{N-\ell} \sum_{\eta_2+\ldots+\eta_{\kappa-1}=\ell} \mu_N(\eta) \sum_{q=1}^{\kappa-2} \eta_q (\eta_{q+1}+d_N) r(q,q+1) \left[F(\eta^{q,q+1}) - F(\eta) \right]^2 \nn\\
&\leq (\kappa-3) R  \frac{M^{\varepsilon N} N^3 }{2 d_N} (1+o(1)) \sum_{\ell=\varepsilon N}^{N} \sum_{j=\varepsilon N}^{N-\ell} \sum_{\eta_2+\ldots+\eta_{\kappa-1}=\ell} w_N(\eta).
\end{align}
Now we can write
\be
\sum_{\ell=\varepsilon N}^{N} \sum_{j=\varepsilon N}^{N-\ell} \sum_{\eta_2+\ldots+\eta_{\kappa-1}=\ell} w_N(\eta)  \leq \sum_{\ell=0}^{N} \sum_{j=0}^{N-\ell} \sum_{\eta_2+\ldots+\eta_{\kappa-1}=\ell} w_N(\eta)  = \tilde{Z}_{N},
\ee
where $\tilde{Z}_N$ is the partition function of a similar system where we set $m_\star(v)=1$ for all  $v\in\{1,\ldots,\kappa\}$. Hence,
\be
\tilde{Z}_N = \frac{\kappa d_N}{N}(1+o(1)).
\ee
and we get
\begin{align}
\frac{N^2}{d_N^3}\sum_{\ell=\varepsilon N}^{N} &\sum_{j=0}^{N-\ell} \sum_{\eta_2+\ldots+\eta_{\kappa-1}=\ell} \mu_N(\eta) \sum_{q=1}^{\kappa-2} \eta_q (\eta_{q+1}+d_N) r(q,q+1) \left[F(\eta^{q,q+1}) - F(\eta) \right]^2 \nn\\
&\leq \kappa(\kappa-3) R  \frac{M^{\varepsilon N} N^4 }{2 d_N^3} (1+o(1)),
\end{align}
which converges to  $0$ because $d_N$ decays subexponentially.

Thus,  the only significant contribution to $D_N(F)$ can be bounded from above by~\eqref{eq-3rdmaincontri},
and altogether we obtain
\begin{align}\label{eq-upperthreeepsilonc}
\col{\limsup_{N\to\infty}} \frac{N^2}{d_N^3} D_N(F) &\leq \col{\limsup_{N\to\infty}} \, 6 (1+\sqrt{\varepsilon})^3\frac{d_N}{N Z_N}  (1+o(1)) \sum_{p=2}^{\kappa-2} c_p^2 r(p,p+1)\sum_{\ell=0}^{\varepsilon N - 1} \sum_{\eta_p=1}^\ell m_\star(p)^{\eta_p} m_\star(p+1)^{\ell-\eta_p} \nn\\
& = 3 (1+\sqrt{\varepsilon})^3 \sum_{p=2}^{\kappa-2} c_p^2 r(p,p+1) m_\star(p) \sum_{\ell=0}^{\infty} \sum_{\eta_p=0}^{\ell-1} m_\star(p)^{\eta_p} m_\star(p+1)^{\ell-\eta_p}\nn\\
&= 3 (1+\sqrt{\varepsilon})^3 \sum_{p=2}^{\kappa-2} c_p^2 \frac{r(p,p+1) m_\star(p)}{(1-m_\star(p))(1-m_\star(p+1))}.
\end{align}
We finally optimize over the constants $c_p$. Let us write $c_p=g(p)-g(p+1)$. By~\eqref{eq-normalizationcell}, we need that
\be
\sum_{p=2}^{\kappa-2} c_p = \sum_{p=2}^{\kappa-2} (g(p)-g(p+1)) = g(2) - g(\kappa-1) = 1,
\ee
and hence, without loss of generality, we can optimize over functions $g$ such that $g(2)=1$ and $g(\kappa-1)=0$.
 Then, taking the infimum  over all such functions $g$, it follows from~\eqref{eq-upperthreeepsilonc} that
\begin{align}
\col{\limsup_{N\to\infty}} \frac{N^2}{d_N^3} D_N(F)  &
\leq \inf_{g:g(2)=1, g(\kappa-2)=0} 3 (1+\sqrt{\varepsilon})^3 \sum_{p=2}^{\kappa-2}
[g(p)-g(p+1)]^2 \frac{r(p,p+1) m_\star(p)}{(1-m_\star(p))(1-m_\star(p+1))} \nn\\
&= 3 (1+\sqrt{\varepsilon})^3 \left( \sum_{p=2}^{\kappa-2} \frac{(1-m_\star(p))(1-m_\star(p+1))}{r(p,p+1) m_\star(p)} \right)^{-1},
\end{align}
because this is again the effective capacity of a linear chain. The proposition now follows by taking $\varepsilon \to 0$.
\end{proof}

\subsection{Proof of Theorem \ref{thm-third-timescale}}\label{sec-Proof-thm3}
\begin{proof}
As a consequence of Propositions \ref{prop-capa3-lb} and \ref{prop-capa3-ub},
if $d_N$ decays subexponentially and ${d_N\log N \to 0}$ as $N\to\infty$,

\be\label{eq-capa3}
C_1 \leq \liminf_{N\to\infty}  \frac{N^2}{d_N^3}  \Capac_N \left(\mathcal{E}_N(1),\mathcal{E}_N(\kappa)\right)
\leq \limsup_{N\to\infty} \frac{N^2}{d_N^3} \Capac_N \left(\mathcal{E}_N(1),\mathcal{E}_N(\kappa)\right)
\leq C_2,
\ee
for some constant $0<C_1,C_2<\infty$.
In view of  \eqref{eq-hitting-time}, recalling that $\Capac_N(A,B)=\Capac_N(B,A)$,
and applying  \eqref{eq-potential-final}, this provides  formula \eqref{eq-3timescale} and conclude
the proof of the theorem.
\end{proof}
\begin{remark}
The constants in~\eqref{eq-lbu-capa3} and~\eqref{eq-ubu-capa3}  do not match, so that we  do not obtain results as in Theorems~\ref{thm-first-timescale}(ii) and~\ref{thm-second-timescale}(ii). We expect that the lower bound can be improved by not restricting the Dirichlet form to configurations with just one particle outside of $S_\star$, but also allowing a small number of particles to make the transition together. Indeed, these are the configurations that contribute to the upper bound. Including these jumps, however, does not result in a linear chain, and therefore the problem is hard to analyze.

Computing the capacity in general systems is an even harder open problem,
since several (possibly intersecting) paths of varying lengths can give a significant contribution to the Dirichlet form.

\end{remark}

\paragraph*{Acknowledgements.}
We thank \col{the} Institute Henri Poincar\'{e} for the hospitality during the trimester ``Disordered
systems, random spatial processes and their applications''.
We acknowledge financial support from the Italian
Research Funding Agency (MIUR) through FIRB project grant n.\ RBFR10N90W.
\col{The work of SD was partially supported by DFG Research Training Group 2131.}



\begin{thebibliography}{99}
\bibliographystyle{plain}

\bibitem{AGL15}
I.~Armend\'{a}riz, S.~Grosskinsky and M.~Loulakis.
\newblock Metastability in a condensing zero-range process in the thermodynamic limit.
\newblock Preprint, arXiv: 1507.03797, (2015).

\bibitem{AL09}
I.~Armend\'{a}riz and M.~Loulakis.
\newblock Thermodynamic limit for the invariant measures in supercritical zero range processes.
\newblock{ \em Probability Theory and Related Fields}, {\bf 145}(1): 175--188, (2009).

\bibitem{BelLan10}
J.~Beltr\'an and C.~Landim.
\newblock Tunneling and metastability of continuous time Markov chains.
\newblock {\em Journal of Statistical Physics}, {\bf 140}(6): 1065--1114, (2010).

\bibitem{BelLan11}
J.~Beltr\'an and C.~Landim.
\newblock Metastability of reversible finite state Markov processes.
\newblock {\em Stochastic Processes and their Applications}, {\bf 121}(8): 1633--1677, (2011).

\bibitem{BelLan12}
J.~Beltr\'an and C.~Landim.
\newblock Metastability of reversible condensed zero range processes on a finite set.
\newblock {\em Probability Theory and Related Fields}, {\bf 152}(3): 781--807, (2012).

\bibitem{BelLan14}
J.~Beltr\'an and C.~Landim.
\newblock A martingale approach to metastability.
\newblock {\em Probability Theory and Related Fields}, {\bf 161}(1--2): 267--307, (2015).

\bibitem{BiaBov09}
A.~Bianchi and A.~Bovier.
\newblock Sharp asymptotics for metastability in the random field Curie-Weiss model.
\newblock {\em Electronic Journal of Probability}, {\bf 14}(53): 1541--1603, (2009).

\bibitem{BEGK01}
A.~Bovier, M.~Eckhoff, V.~Gayrard and M.~Klein.
\newblock Metastability in stochastic dynamics of disordered mean-field models.
\newblock {\em Probability Theory and Related Fields}, {\bf 119}(1): 99--161, (2001).

\bibitem{BEGK02}
A.~Bovier, M.~Eckhoff, V.~Gayrard and M.~Klein.
\newblock  Metastability and low lying spectra in reversible Markov chains.
\newblock {\em  Communications in Mathematical Physics}, {\bf 228}(2): 219--255, (2002).

\bibitem{BEGK04}
A.~Bovier, M.~Eckhoff, V.~Gayrard and M.~Klein.
\newblock Metastability in reversible diffusion processes~I.
Sharp asymptotics for capacities and exit times.
\newblock {\em Journal of the European Mathematical Society}, {\bf 6}(4):  399--424,  (2004).

\bibitem{BdH15}
A.~Bovier and  F.~den Hollander.
\newblock {\em Metastability. A Potential-Theoretic Approach.}
Springer International Publishing, (2015).

\bibitem{BN14}
A.~Bovier and  R.~Neukirch.
\newblock A note on metastable behaviour in the zero-range process.
\newblock Chapter in  {\em  Singular Phenomena and Scaling in Mathematical Models},
 Springer International Publishing: 69--90, (2014).

\bibitem{CarGiaGibRed}
G.~Carinci, C.~Giardin\`a, C.~Giberti and F.~Redig.
Dualities in population genetics: a fresh look with new dualities.
{\em Stochastic Processes and their Applications},  {\bf 125}(3): 941--969, (2014).

\bibitem{CirNar13}
E.N.M.~Cirillo and F.R.~Nardi.
\newblock Relaxation height in energy landscapes: an application to multiple metastable states.
\newblock {\em Journal of Statistical Physics}, {\bf 150}(6): 1080--1114, (2013).

\bibitem{CirRedRus}
P.~Cirillo, F.~Redig and W.~Ruszel.
\newblock Duality and stationary distributions of wealth distribution models.
\newblock {\em Journal of Physics A: Mathematical and Theoretical}, {\bf 47}(8): 085203, (2014).

\bibitem{mis}
C.~Cocozza-Thivent.
\newblock Processus des misanthropes.
\newblock {\em Zeitschrift f\"ur Wahrscheinlichkeitstheorie und verwandte Gebiete},
{\bf 70}(4): 509--523, (1985).

\bibitem{EvaWac}
M.R.~Evans and B.~Waclaw.
\newblock Condensation in stochastic mass transport models: beyond the zero-range process.
\newblock {\em  Journal of Physics A: Mathematical and Theoretical},  {\bf 47}(9): 095001, (2014).

\bibitem{GiaKurRed}
C.~Giardin\`a, J.~Kurchan and F.~Redig.
\newblock Duality and exact correlations for a model of heat conduction.
\newblock {\em Journal of Mathematical Physics}, {\bf 48}(3): 033301, (2007).

\bibitem{GiaKurRedVaf}
C.~Giardin\`a, J.~Kurchan, F.~Redig and K.~Vafayi.
\newblock Duality and hidden symmetries in interacting particle systems.
\newblock {\em Journal of Statistical Physics}, {	\bf 135}(1): 25--55, (2009).

\bibitem{GiaRedVaf}
C.~Giardin\`a, F.~Redig and K.~Vafayi.
\newblock Correlation Inequalities for interacting particle systems with duality.
\newblock {\em Journal of Statistical Physics}, {\bf 141}(2): 242--263, (2010).

\bibitem{GroRedVaf11}
S.~Grosskinsky, F.~Redig and K.~Vafayi.
\newblock Condensation in the inclusion process and related models.
\newblock {\em Journal of Statistical Physics}, {\bf 142}(5): 952--974, (2011).

\bibitem{GroRedVaf13}
S.~Grosskinsky, F.~Redig and K.~Vafayi.
\newblock Dynamics of condensation in the symmetric inclusion process.
\newblock {\em Electronic Journal of Probability}, {\bf 18}(66): 1--23, (2013).

\bibitem{GSS03}
S.~Grosskinsky, G.M.~Sch\"{u}tz and H.~Spohn.
\newblock Condensation in the zero range process: stationary
and dynamical properties.
\newblock{\em Journal of Statistical Physics}, {\bf 113}(3--4): 389--410, (2003).

\bibitem{Lan14}
C.~Landim.
\newblock A topology for limits of Markov chains.
\newblock {\em Stochastic Processes and their Applications}, {\bf 125}(3): 1058--1088, (2015).

\bibitem{L14}
C.~Landim.
\newblock
Metastability for a non-reversible dynamics: the evolution of the condensate in totally asymmetric zero range processes.
\newblock {\em Communications in Mathematical Physics}, {\bf 330}(1): 1--32, (2014).

\bibitem{LanLem15}
C.~Landim and P.~Lemire.
\newblock Metastability of the two-dimensional Blume-Capel model with zero chemical potential and small magnetic field.
\newblock Preprint, arXiv:1512.09286, (2015).

\bibitem{LPW}
D.A.~Levin, Y.~Peres and E.L.~Wilmer.
\newblock  {\em Markov chains and mixing times.}
\newblock  American Mathematical Society, Providence, RI, (2009).

\bibitem{OV}
E.~Olivieri and M.E.~Vares.
\newblock  {\em Large Deviations and Metastability.}
\newblock Cambridge University Press, (2005).


\bibitem{Wen48}
J.G.~Wendel.
\newblock Note on the gamma function.
\newblock {\em The American  Mathematical Monthly}, {\bf 55}(9): 563--564, (1948).

\end{thebibliography}
\end{document}